\definecolor{darkred}{rgb}{0.5,0.15,0.15}
\renewcommand{\ell}{X} % elliptic surface
\newcommand{\be}{\begin{eqnarray}}
\newcommand{\ee}{\end{eqnarray}}
\newcommand{\bea}{\begin{eqnarray}}
\newcommand{\eea}{\end{eqnarray}}
\newcommand{\ben}{\begin{eqnarray}}
\newcommand{\een}{\end{eqnarray}}
\theoremstyle{definition}
\newtheorem{thm}{Theorem}
\newtheorem{prop}[thm]{Proposition}
\newtheorem{lem}[thm]{Lemma}
\newtheorem{cor}[thm]{Corollary}
\newtheorem{dfn}[thm]{Definition}
\newtheorem{rem}[thm]{Remark}
\newtheorem{conj}[thm]{Conjecture}
\numberwithin{equation}{section}
\title{A hyperk\"ahler geometry associated to the BPS structure of the resolved conifold}
\author{Murad Alim, Arpan Saha and Iv\'an Tulli}
\date{}
\begin{document}

\maketitle

\begin{abstract}
We associate to the resolved conifold an affine special K\"{a}hler (ASK) manifold of complex dimension $1$, and an instanton corrected hyperk\"{a}hler (HK) manifold of complex dimension $2$. We describe these geometries explicitly, and show that the instanton corrected HK geometry realizes an Ooguri-Vafa-like smoothing of the semi-flat HK metric associated to the ASK geometry. On the other hand, the instanton corrected HK  geometry associated to the resolved conifold can be described in terms of a twistor family of two holomorphic Darboux coordinates. We study a certain conformal limit of the twistor coordinates, and  conjecture a relation to a solution of a Riemann-Hilbert problem previously considered by T. Bridgeland. 
\end{abstract}
\maketitle
\textbf{Keywords:} hyperk\"{a}hler, twistor coordinates, variations of BPS structures, conformal limit, Riemann-Hilbert problem.\\

\textbf{MSC codes:} 53C26, 53C28.
\tableofcontents

%%%%%%%%%%%%%%%%%%%%%%%%%%%%%%%%%%%%

%%%%%%%%%%%%%%%%%%%%%%%%%%%%%%%%%%%%%%%%%%%%%%%%%

\section{Introduction}

The study of supersymmetric quantum field theories and string theories has been an extremely rich source of insights for geometry. The spaces of scalar fields which naturally appear in these physical theories lead to mathematical moduli spaces which often carry rich mathematical structures.\\

A particularly interesting class of theories is given by $N=2$ supersymmetric theories in four dimensions which can be obtained from compactifications of ten dimensional type IIA and type IIB string theories on mirror families of Calabi-Yau (CY) threefolds. The scalar fields of the vector multiplets in these theories lead to moduli spaces which are projective (resp.\ affine) special K\"ahler manifolds \cite{Strominger:1990pd,Freed} when the family of mirror CY threefolds is compact (resp.\ non-compact), see \cite{Lledo} for a review. The moduli spaces have different meaning on both sides of mirror symmetry: they correspond to the moduli spaces of complex structures on the B-side and to moduli spaces of complexified K\"ahler forms on the A-side of mirror symmetry. The physical origin of mirror symmetry from 2d superconformal field theories (SCFT) suggests furthermore the identification of the data of a bundle together with a flat connection on both sides of mirror symmetry. This data translates mathematically into a variation of Hodge structures equipped with a flat Gauss-Manin connection which can be used to formulate the geometric constraints of special K\"ahler geometry. The formulation of the moduli space and the flat connection on the A-model side leads to the notion  of quantum cohomology which requires the genus zero Gromov-Witten invariants, see e.~g.~\cite{Cox:2000vi} and references therein.\\

Homological mirror symmetry \cite{HMS} in contrast identifies different triangulated categories attached to a mirror pair of CY threefolds. The bounded derived category of coherent sheaves of one CY gets identified with the derived Fukaya category of the mirror. The moduli spaces in this case are expected to appear as spaces of stability conditions which select special objects in both categories which correspond to the BPS states of the resulting $4d$ theory, see \cite{Aspinwallbook}. For this purpose, the notion of $\pi$-stability was put forward \cite{Douglas:2000ah,Douglas:2000qw,DouglasHMS}. Within $\pi$-stability the stability of objects is governed by the physically motivated notion of a central charge, which is part of the data of the resulting $N=2$ theory.  $\pi$-stability and mirror symmetry predict that the central-charge stability ($\pi$-stability) of the derived category of coherent sheaves of a CY $X$ would require as an input the genus zero Gromov-Witten invariants of $X$. An abstract mathematical notion of $\pi$-stability which is suitable for any triangulated category was put forward by Bridgeland \cite{Bridgeland}, the latter also leads to a notion of a space of stability conditions. \\

This notion of stability is crucial in the study of mathematical invariants such as the Donaldson-Thomas (DT) invariants. A boost in excitement in the study of wall-crossing problems was triggered by new wall-crossing formulas of Kontsevich and Soibelman \cite{KS:2008} as well as Joyce and Song \cite{JS}. The work of Gaiotto, Moore and Neitzke (GMN) \cite{GMN1,GMN2,GMN3} provided moreover a physical embedding of these developments as well as new geometric constructions of hyperk\"ahler manifolds, whose metrics carry the data of the BPS spectra as well as their jumps. See \cite{NewCHK} for an exposition of these ideas. The construction of the hyperk\"ahler metric in this setup relies on the construction of a twistor family of Darboux coordinates for the $\mathcal{O}(2)$-twisted family of holomorphic symplectic forms associated to a hyperk\"{a}hler manifold \cite{HKSUSY}. These coordinates are governed by a system of TBA like equations, whose solutions are in general very hard to describe explicitly. In the simpler case where the BPS spectrum is mutually local, the TBA equations just reduce to integral formulas for the Darboux coordinates, and the resulting hyperk\"{a}hler structure can be described explicitly (even though the coordinates might not). In \cite{Gaiotto:2014bza}, Gaiotto considered a conformal limit of the TBA equations when restricted to a certain Lagrangian submanifold (with respect to one of the holomorphic symplectic structures) of the  hyperk\"ahler manifold associated to the BPS problem. In simple cases, explicit solutions were found involving Gamma functions.\\

In the last few years new exciting insights began to emerge from the study of BPS structures. The new developments are concerned with the analytic and integrable structures behind wall-crossing phenomena. The emerging links provide new connections between DT invariants and GW invariants, going substantially beyond the scope of the previously conjectured relation \cite{MNOP1,MNOP2}. Building on the conformal limit of Gaiotto \cite{Gaiotto:2014bza}, Bridgeland put forward a Riemann-Hilbert problem which transforms the wall-crossing data of Donaldson-Thomas invariants of a given derived category into a tau-function \cite{BridgelandDT}. In simple examples including the resolved conifold \cite{BridgelandCon}, it was shown that an asymptotic expansion of the tau-function leads to the full Gromov-Witten potential. 
\\

The motivation of this current work is two-fold. On one hand, the construction of instanton corrected hyperk\"{a}hler (HK) metrics from \cite{GMN1,NewCHK} suggests that such a metric should improve the behavior of the semi-flat HK metric canonically associated to an affine special K\"{a}hler (ASK) geometry (see \cite[Section 5.3]{NewCHK}). Our first motivation for this work is to try to exemplify this improvement explicitly for an instanton corrected HK geometry associated to the resolved conifold, going beyond the well-understood case of the Ooguri-Vafa space (see \cite[Section 6]{NewCHK} and \cite{LCS}). On the other hand, we would like to try to address an important so far missing connection in the web of ideas outlined above, namely the relation between a conformal limit of the instanton corrected HK geometry studied in \cite{Gaiotto:2014bza}, and Bridgeland's Riemann-Hilbert problem \cite{Bridgeland1,BridgelandCon} associated to the resolved conifold.\\

Our approach to tackle this will be to first search for some natural affine special K\"{a}hler geometry associated to the resolved conifold. Using the BPS spectrum of the resolved conifold, we will then be able to associate an instanton corrected hyperk\"{a}hler geometry, along the lines of \cite{GMN1,NewCHK}. The fact that the BPS spectrum turns out to be mutually local will then allow for a totally explicit expression of the hyperk\"{a}hler structure \cite[Section 3]{QKBPS}. The trickiest part will then be the relation of the conformal limit of the hyperk\"{a}hler geometry, considered in \cite{Gaiotto:2014bza}, with the solution of the Riemann-Hilbert problem considered in \cite{BridgelandCon}. As we will discuss, the conformal limit of  \cite{Gaiotto:2014bza} a priori gives just formal expressions, so we will need to give some prescription for how to make analytical sense of them (at least in the specific case of the HK geometry we associate to the resolved conifold).  We will show that our prescription will satisfy properties related to a solution of the Riemann-Hilbert problem considered in \cite{BridgelandCon}. However, the asymptotic properties that we manage to prove are  weaker than those required to guarantee a match with the unique solution of the Riemann-Hilbert problem, so we will be only be able to conjecture a relation between the conformal limit and the solution of the Riemann-Hilbert problem. \\

We should emphasize that the instanton corrected HK metric we associate considers only a ``truncated" part of the BPS indices.  Indeed, the BPS spectrum of the resolved conifold has the form 

 \begin{equation}\label{BPScon}
\Omega(\gamma) =
  \begin{cases}
    1       & \quad \text{if } \gamma = \pm \beta + n \delta\quad \text{for }\;\; n\in \mathbb{Z},\\
    -2  & \quad \text{if } \gamma= k \delta \quad \text{for} \;\;k\in \mathbb{Z}\setminus \{ 0\},\\
    0 & \quad \text{otherwise}\,.
  \end{cases}
\end{equation}
and the HK geometry we consider will not take into account the part corresponding to the charges $\gamma=k\delta$. The cause of this truncation is the fact that $\delta$ plays the role of a ``flavor" charge in our story. This implies that the corresponding central charge $Z_{\delta}$ does not enter in the description of the underlying ASK geometry (see the discussion in Section \ref{resASK}); and the BPS indices $\Omega(k\delta)$ do not contribute to the instanton corrections of the associated semi-flat geometry (see the second point in Remark \ref{flavorcontribution}). Furthermore, a consequence of this truncation is that the conformal limit associated to our HK geometry only allows us to make relations to the part of the Riemann-Hilbert problem of \cite{BridgelandCon} that involves the Faddeev quantum dilogarithm  \cite{qdilog}.  The reasons for considering $\delta$ as a flavor charge, are as follows:

\begin{itemize}
    \item On one hand, the natural period $Z_{\delta}$ associated to $\delta$ turns out to be constant (see Appendix \ref{appendix:mirrorsymmetry} and Section \ref{rescon}). This suggests that $\delta$ should indeed be treated as a flavor charge, in the language of \cite{GMN1,GMN2, NewCHK}, since it cannot be used as a coordinate for an affine special K\"{a}hler geometry.
    \item If one takes $\delta$ to be a flavor charge, the ASK geometry defined by the periods $Z_{\beta}$ and $Z_{\beta^{\vee}}$ turns out to be  of complex dimension $1$, and the associated intanton corrected HK geometry of complex dimension $2$. On the other hand, the Ooguri-Vafa space is another instanton corrected HK geometry of complex dimension $2$, realizing the smoothing mechanism explained in \cite{NewCHK}. This allows us to explicitly compare the two spaces and show that the instanton corrected HK geometry that we associate to the resolved conifold also realizes the smoothing mechanism via an Ooguri-Vafa-like smoothing. 
    \item On the other hand, if one wants to consider an instanton corrected HK geometry that does take into account the BPS spectrum associated to $\Omega(k\delta)$, one option is to projectivize the periods \eqref{periods}\footnote{This involves setting for  $w\in \mathbb{C}^{\times}$, $Z_{\delta}:=w\varpi^0=w$ and $Z_{\beta}:=w\varpi^1=wt=v$ and writing $Z_{\beta^{\vee}}:=w\varpi^2$ and $Z_{\delta^{\vee}}:=w\varpi^3$ in terms of $w$ and $v$} and consider the (possibly-indefinite)   affine special K\"{a}hler geometry of complex dimension $2$ defined by them. This would give a (possibly-indefinite) HK geometry of complex dimension $4$ taking into account the whole BPS spectrum. However, the details of domains of definition, signature, and what kind of smoothing occurs (if any), is harder to explicitly study. Because of this and the previous two points,  we opted to consider the simpler HK geometry of complex dimension $2$ mentioned before. 
\end{itemize}

On the other hand, a different path to a hyperk\"ahler geometry associated to a BPS problem was put forward by Bridgeland and Strachan in \cite{Bridgeland:2020zjh} and was put in a physical context in \cite{Alexandrov:2021wxu}. Via this perspective one obtains a complex hyperk\"{a}hler geometry, rather than a real one. In the case of the resolved conifold, the complex hyperk\"{a}hler geometry does take into account the full BPS spectrum (\ref{BPScon}), but the physical interpretation of such a complex hyperk\"{a}hler geometry is at the moment unknown. \\

Hence, our approach associates a real HK geometry, but truncates part of the BPS spectrum; while the approach \cite{Bridgeland:2020zjh} considers the full BPS spectrum, but associates a complex HK geometry. The obvious question of the relation between the two HK geometries is certainly an interesting one, but goes beyond the scope of this paper.\\

\textbf{Outline of the paper and statement of main results:}\\
    
  In Section \ref{resASK} we recall what the resolved conifold is, and motivate (together with Appendix \ref{appendix:mirrorsymmetry}) the ingredients of the affine special K\"{a}hler geometry that we will associate to it. We then define the affine special K\"{a}hler (ASK) manifold associated to the resolved conifold, and discuss the semi-flat hyperk\"{a}hler (HK) metric obtained via the rigid c-map \cite{TypeIIgeometry,Freed,ACD}. We will emphasize a description of the ASK and semi-flat HK geometry in terms of certain tuples $(M,\Gamma,Z)$, where $M$ is a complex manifold, $\Gamma\to M$ is a local system of lattices (the ``charge lattice") and $Z$ is a holomorphic section of $\Gamma^*\otimes \mathbb{C}\to M$ (the ``central charge"). This kind of description can be found for example in \cite{GMN1,NewCHK,QKBPS}, and will be convenient when we discuss the instanton corrected HK geometry. \\
    
 In Section \ref{instcon} we consider the instanton corrections to the semi-flat HK geometry associated to the resolved conifold. We start by recalling the construction of the instanton corrected HK geometries from the physics literature \cite{GMN1}, in terms of certain variations of BPS structures $(M,\Gamma,Z,\Omega)$ (see \cite{BridgelandDT} or Definition \ref{varBPS}). We then restrict to the simpler case where the instanton corrections are mutually local (see Definition \ref{defmutloc}), which is the relevant case for the resolved conifold. A mathematical treatment of the mutually local case can be found in \cite[Section 3]{QKBPS}. By adapting some results of \cite{QKBPS} for the case with flavor charges, one then gets an explicit form for the instanton corrected HK metric associated to the resolved conifold. See  Theorem \ref{theorem1} and Corollary \ref{HKexplicit}.\\
    
Let $(N,g_{N},\omega_1,\omega_2,\omega_3)$ be the HK manifold associated to the resolved conifold, with $\omega_{\alpha}$ denoting the K\"{a}hler forms. We end Section \ref{instcon} by comparing the HK structure  $(N,g_{N},\omega_1,\omega_2,\omega_3)$ with the so-called Ooguri-Vafa hyperk\"{a}hler metric (see \cite{NewCHK,LCS,OV}). The main result of this section is the following theorem:\\

\textbf{Theorem} \ref{ovsmoothing}: Let $(N,g_{N},\omega_1,\omega_2,\omega_3)$ be the (complex $2$-dimensional) HK manifold associated to the resolved conifold, and $t$ the special coordinate of Section \ref{ASKgeometry} of the ASK manifold associated to the resolved conifold.  Furthermore, let $(N^{\text{ov}},g^{\text{ov}},\omega_1^{\text{ov}},\omega_2^{\text{ov}},\omega_3^{\text{ov}})$ be the Ooguri-Vafa HK manifold centered at $t=0$ with cutoff $\Lambda=\frac{i}{2\pi}$. Then

\begin{equation}
    \omega_{\alpha}=\omega_{\alpha}^{\text{ov}}+\eta_{\alpha}, \;\;\; \text{for} \;\;\; \alpha=1,2,3\,,
\end{equation}
where $\eta_{\alpha}$ extend smoothly over the locus $t=0$. In particular, the HK structure of $(N,g_{N},\omega_1,\omega_2,\omega_3)$ smoothly extends over the points in the locus $t=0$ where the forms $\omega_{\alpha}$ remain non-degenerate.\\

This kind of Ooguri-Vafa-like smoothing of the semi-flat HK structure via instanton corrections was conjectured in \cite[Section 7]{NewCHK} for a certain general class of HK metrics, so Theorem \ref{ovsmoothing} provides an instance in which the conjecture holds, beyond the well-understood case of the Ooguri-Vafa space \cite{LCS}. \\

The HK structure $(N,g_{N},\omega_1,\omega_2,\omega_3)$ of the resolved conifold can be described in terms of two holomorphic twistor coordinates $\mathcal{X}_{\beta^{\vee}}(x,\zeta)$ and $\mathcal{X}_{\beta}(x,\zeta)$, in the sense that the $\mathcal{O}(2)$-twisted family of holomorphic symplectic forms associated to its twistor space is given by
    
    \begin{equation}
        \zeta\varpi(\zeta)\otimes \partial_{\zeta}=\frac{\zeta}{4\pi^2}\mathrm{d}\log( \mathcal{X}_{\beta^{\vee}}(\zeta))\wedge \mathrm{d}\log (\mathcal{X}_{\beta}(\zeta)) \otimes \partial_{\zeta}\,, \;\;\;\; \zeta \in \mathbb{C}\subset \mathbb{C}P^1.
    \end{equation}
    where $\mathrm{d}$ does not differentiate along the $\zeta$ direction.\\
    
    In Section \ref{confRH} we consider a certain conformal limit (see \cite{Gaiotto:2014bza}) of $\mathcal{X}_{\beta^{\vee}}$ and $\mathcal{X}_{\beta}$. While the conformal limit of $\mathcal{X}_{\beta}$ always exists, the one for $\mathcal{X}_{\beta^{\vee}}$ gives only a conditionally convergent expression. After specifying how to sum the conditionally convergent expression (see (\ref{condconv})), we will show that it satisfies properties similar to the ones required in the Riemann-Hilbert problem considered in \cite{BridgelandCon}. While Brideland's Riemann-Hilbert problem requires certain asymptotic conditions along half-planes (see (RH2) and (RH3) from Section \ref{RHprob}), our methods only show that the asymptotics conditions hold on certain smaller sectors determined by BPS rays. Nevertheless, this suggests the following conjecture (see also Remark \ref{endremark}):\\
    
    \textbf{Conjecture} \ref{Theorem2}: fix $t\in \mathbb{C}^{\times}$ with $\text{Im}(t)> 0$. Let $\mathcal{X}_{\beta^{\vee}}(t,\lambda)$ denote the specified  convergent expression of the conformal limit of $\mathcal{X}_{\beta^{\vee}}(x,\zeta)$, and let $\Phi_{\beta^{\vee}}(v,w,\lambda)$ be obtained via Definition \ref{Bsol} in terms of Bridgeland's solution to the Riemann-Hilbert problem of Section \ref{RHprob}. Then

\begin{equation}
    \mathcal{X}_{\beta^{\vee}}(t,\lambda)=\exp(2\pi i Z_{\beta^{\vee}}(t)/\lambda)\Phi_{\beta^{\vee}}(t,1,\lambda)\,,
\end{equation}
where $Z_{\beta^{\vee}}$ is the central charge evaluated on the charge $\beta^{\vee}$. In particular, if $\lambda$ is on the sector between $i\mathbb{R}_{+}t$ and $i\mathbb{R}_{+}(t-1)$ the following holds:

\begin{equation}
    \mathcal{X}_{\beta^{\vee}}(t,\lambda)=H(t|1,-\lambda)e^{Q_H(t|1,-\lambda)+2\pi i Z_{\beta^{\vee}}(t)/\lambda}\,,
\end{equation}
where $H(t|\omega_1,\omega_2)$ is the Faddeev quantum dilogarithm, and
\begin{equation}
    Q_H(t|\omega_1,\omega_2):=-\frac{\omega_1}{2\pi i\omega_2}\text{Li}_2(e^{2\pi it/\omega_1})-\frac{1}{2}\log(1-e^{2\pi i t/\omega_1})+\frac{\pi}{12}\frac{\omega_2}{\omega_1}\,.
\end{equation}

%%%%%%%%%%%%%%%%%%%%%%%%%%%%%%%%%%%%%%%%%%%

\section{The resolved conifold and an associated affine special K\"{a}hler geometry}\label{resASK}

\subsection{The resolved conifold}\label{rescon}

The conifold singularity refers to a singular point in a Calabi-Yau threefold that locally looks like
\begin{equation}
(x_1\, x_2 - x_3 \,x_4=0) \subset \mathbb{C}^4\,.
\end{equation}

The Calabi-Yau threefold given by the total space of the following rank two bundle over the projective line:
\begin{equation}
X := \mathcal{O}(-1) \oplus \mathcal{O}(-1) \rightarrow \mathbb{P}^1\,,
\end{equation}
corresponds to the resolution of the conifold singularity in $\mathbb{C}^4$ and is known as the resolved conifold. $C$ contains a unique compact curve, the zero section $C\simeq \mathbb{P}^1\subset X$. It defines a class $\beta= \left[ C\right] \in H_{2}(X,\mathbb{Z})$.  Furthermore, let $\delta$ be the generator of $H_0(X,\mathbb{Z})$. We now consider the lattice
\begin{equation}
\Gamma= \mathbb{Z}\cdot \delta + \mathbb{Z}\cdot \beta + \mathbb{Z} \cdot \beta^{\vee}\,,
\end{equation}
with pairing
\begin{equation}
        \langle \delta, \beta \rangle=0, \;\;\; \langle \delta, \beta^{\vee} \rangle=0,\;\;\; \langle \beta^{\vee}, \beta \rangle=1 \,. 
    \end{equation}
We think of $\beta^{\vee}$ as the generator in $H_4(X_c,\mathbb{Z})$, dual to $\beta \in H_{2}(X_c,\mathbb{Z})$, where $X_c$ refers to a compact CY geometry which contains the resolved conifold in a suitable limit. Alternatively one may think of $\beta^{\vee}$ as a regularized four-cycle class.\\

Let $B\in H^{2}(X,\mathbb{R})/H^{2}(X,\mathbb{Z})$ be the B-field and $\omega$ be the K\"ahler form  and $\omega_{\mathbb{C}}=B+i \omega$ the complexified K\"ahler form. We define
\begin{equation}
t= \int_{\beta} B+ i \omega\,,
\end{equation}
as well as the periods\footnote{The name periods is motivated by mirror symmetry and discussed in the appendix \ref{appendix:mirrorsymmetry}, these periods correspond to the quantum corrected volumes of the generators of $H_0(X,\mathbb{Z}),H_2(X,\mathbb{Z})$ as well as the regularized generator of $H_4(X,\mathbb{Z})$}

\begin{equation}\label{perioddef}
    \varpi^0=1\,, \quad \varpi^1=t\,, \quad \varpi^2=F_t\,,
\end{equation}
where 
$$F_t:= \frac{\partial}{\partial t} F_0(t)= \frac{1}{2}t^2 + \frac{1}{(2\pi i)^2}\textrm{Li}_2(q)\,, \quad q:=\exp(2\pi i t) \,.$$
Here, $F_0(t)$ is the genus zero, degree non-zero Gromov-Witten potential of the resolved conifold, given by
\begin{equation}
    F_0(t)= \frac{1}{6} t^3+ \frac{1}{(2\pi i)^3}\textrm{Li}_3(q)\,, \quad q:=\exp(2\pi i t)\,,
\end{equation}
and the polylogarithm is defined by for $|z|<1$
\begin{equation}
\textrm{Li}_s(z) = \sum_{n=0}^{\infty} \frac{z^n}{n^s}\, ,\quad s\in \mathbb{C}\,.
\end{equation}

For further motivation on the particular choice of periods, see Appendix \ref{appendix:mirrorsymmetry}.

%%%%%%%%%%%%%%%%%%%%%%%%%%%%%%%%%%%%%%%%%%%%%%%%%%%%%%%%

\subsection{ASK geometries in terms of central charges}\label{ASKgeometrygeneral}

Before fully describing the affine special K\"{a}hler (ASK) geometry associated to the resolved conifold, we review a way of describing an ASK geometry in terms of the notion of central charges.   This perspective will be useful for the following sections, and can be found in \cite{GMN1,NewCHK,QKBPS}.\\

We start by considering a tuple $(M,\Gamma,Z)$, where

\begin{itemize}
    \item $M$ is a complex manifold. We denote $\text{dim}_{\mathbb{C}}(M)=r$.
    \item Charge lattice: $\Gamma\to M$ is a local system of lattices given as an extension
    
    \begin{equation}
        0 \to \Gamma_f \to \Gamma \xrightarrow[]{p} \Gamma_g \to0 \, ,
    \end{equation}
    where $\Gamma_f \to M$ is a trivial local system and $\Gamma_g\to M$ has rank $2r$. We assume that $\Gamma$ carries a skew, fiber-wise, parallel pairing $\langle - , - \rangle: \Gamma \times \Gamma \to \mathbb{Z}$ such that $\langle \gamma_f,\gamma \rangle=0$ for all $\gamma_f \in \Gamma_f$ and $\gamma \in \Gamma$. We then have an induced pairing $\langle - , - \rangle: \Gamma_g \times \Gamma_g \to \mathbb{Z}$, which we assume locally admits Darboux frames. Our convention will be that a local Darboux frame  $(\widetilde{\gamma}_i,\gamma^i)$ of $\Gamma_g$ satisfies $\langle \widetilde{\gamma}_i,\gamma^j\rangle= \delta_{i}^j$. 
    \item Central charge: $Z$ is a holomorphic section of $\Gamma^*\otimes \mathbb{C} \to M$, where $\Gamma^*$ denotes the dual of $\Gamma$. Given a local section $\gamma$ of $\Gamma|_U$ for $U\subset M$, we denote $Z_{\gamma}:=Z(\gamma):U\to \mathbb{C}$ the corresponding holomorphic function. If $\gamma_f$ is a section of $\Gamma_f$, we assume that $Z_{\gamma_f}$ is a constant function.
\end{itemize}

\begin{rem}\label{flavor}
We will frequently refer to elements of $\Gamma_f$ as flavor charges. Similarly, we will refer to $Z_{\gamma}$ for $\gamma \in \Gamma_f$ as a flavor period. On the other hand, the elements of $\Gamma_g$ are typically referred to as gauge charges in the physics literature. 
\end{rem}

\begin{dfn} \label{centralASK} A tuple $(M,\Gamma,Z)$ as before will be called a central charge ASK geometry if the following holds:

\begin{itemize}
    \item Given local sections $(\widetilde{\gamma}_i,\gamma^i)$ of $\Gamma$ projecting to a Darboux frame of $\Gamma_g$, the 1-forms $dZ_{\gamma^i}$ (or $dZ_{\widetilde{\gamma}_i}$) give a local frame of $T^*M$. In particular, $(Z_{\gamma^i})$ (or $(Z_{\widetilde{\gamma}_i})$) give local coordinates on $M$.
    \item By using the identification $\Gamma_g \cong \Gamma_g^*$ given by $\gamma \to \langle \gamma, - \rangle \in \Gamma_g^*$, we consider the induced pairing on $\Gamma_g^*$, and extend it $\mathbb{C}$-bilinearly to a ($\mathbb{C}$-valued) pairing $\langle -, - \rangle$ on  $\Gamma_g^*\otimes \mathbb{C}$. With respect to this pairing we have
    
    \begin{equation}
        \langle \mathrm{d}Z \wedge \mathrm{d}Z \rangle=0 \,.
    \end{equation}
    \begin{rem}
    In the above, we think of $dZ$ as a 1-form with values in $\Gamma^*_g\otimes \mathbb{C}$, due to the fact that $Z_{\gamma_f}$ is constant for a section $\gamma_f$ of $\Gamma_f$.
    \end{rem}
    \item The two-form $\omega:=\frac{1}{4}\langle \mathrm{d}Z \wedge \mathrm{d}\overline{Z} \rangle$ is non-degenerate. In particular, if $J$ denotes the complex structure of $M$, then $g:=\omega(-,J-)$ is a (possibly-indefinite) metric on $M$. 
\end{itemize}

\end{dfn}
\begin{rem} The flavor data of $\Gamma_f$ and $Z|_{\Gamma_f}$ do not have any influence in the ASK geometry defined by $(M,\Gamma,Z)$. The reason to include it will become more clear when we also consider the data of BPS indices $\Omega(\gamma)$ and the corresponding instanton corrected HK geometry \cite{NewCHK}.

\end{rem}

\begin{prop}\label{propASK} Let $(M,\Gamma,Z)$ be a central charge ASK geometry. Then $(M,\omega)$ is an ASK manifold with possibly indefinite signature. 

\end{prop}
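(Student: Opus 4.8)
The plan is to reduce the abstract data $(M,\Gamma,Z)$ to a nondegenerate holomorphic Lagrangian immersion and then either verify Freed's axioms directly or invoke the standard correspondence between such immersions and ASK structures \cite{Freed,ACD}. First I would fix a point of $M$ and a Darboux frame $(\widetilde{\gamma}_i,\gamma^i)$ of $\Gamma_g$ on a neighborhood $U$, lifted to local sections of $\Gamma$, and set $z^i:=Z_{\gamma^i}$ and $w_i:=Z_{\widetilde{\gamma}_i}$. By the first bullet of Definition \ref{centralASK} the $\mathrm{d}z^i$ form a holomorphic coframe, so the $z^i$ are holomorphic coordinates. Using the identification $\Gamma_g\cong\Gamma_g^*$ together with the \emph{skew}-symmetry of the pairing, a direct computation gives $\langle\mathrm{d}Z\wedge\mathrm{d}Z\rangle=-2\sum_i\mathrm{d}z^i\wedge\mathrm{d}w_i$. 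Hence the condition $\langle\mathrm{d}Z\wedge\mathrm{d}Z\rangle=0$ says precisely that $\phi=(z^i,w_i)\colon U\to\mathbb{C}^{2r}$ is Lagrangian for the holomorphic symplectic form $\Omega=\sum_i\mathrm{d}z^i\wedge\mathrm{d}w_i$; combined with the fact that the $z^i$ are coordinates, the holomorphic Poincar\'e lemma produces a prepotential $F$ with $w_i=\partial F/\partial z^i$ and symmetric Hessian $\tau_{ij}:=\partial_i\partial_j F$.

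Next I would identify the metric. A parallel computation yields $\langle\mathrm{d}Z\wedge\mathrm{d}\bar Z\rangle=\sum_{i,j}2\I\,\Im(\tau_{ij})\,\mathrm{d}z^i\wedge\mathrm{d}\bar z^j$, so that $\omega=\tfrac14\langle\mathrm{d}Z\wedge\mathrm{d}\bar Z\rangle$ is a real $(1,1)$-form, reality following from the skew-symmetry of the pairing under conjugation. The non-degeneracy required in the third bullet then amounts to invertibility of the symmetric matrix $\Im\tau$, and since $\omega$ is of type $(1,1)$ one has $\omega(J\cdot,J\cdot)=\omega$, so $g=\omega(-,J-)$ is a symmetric, $J$-invariant, possibly indefinite pseudo-K\"ahler metric whose signature is read off from $\Im\tau$.

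I would then construct the flat connection and check the special-K\"ahler axioms. Declare $\nabla$ to be the torsion-free flat connection on $U$ for which the $2r$ real functions $\{\Real z^i,\Real w_i\}$ are affine; non-degeneracy of $\Im\tau$ guarantees these are genuine coordinates, since the relevant Jacobian $\partial(\Real w_i)/\partial(\Im z^j)=-\Im\tau_{ij}$ is invertible. A short computation shows $\omega=\sum_i\mathrm{d}(\Real w_i)\wedge\mathrm{d}(\Real z^i)$, which is constant in these affine coordinates, whence $\nabla\omega=0$; flatness and freedom from torsion are immediate. It remains to verify the genuinely ``special'' condition $\mathrm{d}^\nabla J=0$, which, once $J$ is written in the affine frame, reduces to the total symmetry of $\partial_i\partial_j\partial_k F$ and therefore holds automatically as $F$ is holomorphic. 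Alternatively, the first two paragraphs exhibit $\phi$ as a nondegenerate holomorphic Lagrangian immersion, and the ASK structure then follows directly from the equivalence of \cite{Freed,ACD}.

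Finally I would address global well-definedness: because $\Gamma$ is only a local system, the construction is a priori local, but two Darboux frames differ by a locally constant element of $\mathrm{Sp}(2r,\mathbb{Z})$, which acts affinely on $\{\Real z^i,\Real w_i\}$ and preserves $\Omega$, so $\nabla$ glues to a globally defined connection while $\omega=\tfrac14\langle\mathrm{d}Z\wedge\mathrm{d}\bar Z\rangle$ is manifestly frame-independent. I expect the main obstacle to be the clean verification of $\mathrm{d}^\nabla J=0$ and the matching of the intrinsically defined flat connection with the complex structure; routing this through the existence and symmetry of the prepotential $F$ is the device that makes it tractable, whereas $\nabla\omega=0$, flatness, and torsion-freeness are comparatively immediate once the affine coordinates are in place.
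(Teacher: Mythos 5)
Your proof is correct and follows essentially the same route as the paper's: fix local sections projecting to a Darboux frame, use $\langle \mathrm{d}Z\wedge \mathrm{d}Z\rangle=0$ to get the symmetry of $\tau_{ij}$ and a local prepotential, and read off the (possibly indefinite) K\"ahler form $\omega=\tfrac{i}{2}\mathrm{Im}(\tau_{ij})\,\mathrm{d}z^i\wedge\mathrm{d}\bar z^j$ from the third condition. The only difference is that the paper stops at ``we recover the usual local formulas of an ASK geometry,'' whereas you go on to explicitly construct the flat connection from $\{\mathrm{Re}\,z^i,\mathrm{Re}\,w_i\}$, verify $\nabla\omega=0$ and $\mathrm{d}^\nabla J=0$, and handle the $\mathrm{Sp}(2r,\mathbb{Z})$ gluing, all of which is correct and simply makes explicit what the paper treats as standard.
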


\begin{proof} 
Let $(\widetilde{\gamma}_i,\gamma^i)$ be local sections of $\Gamma$ projecting to a local Darboux frame of $\Gamma_g\to M$ over $U\subset M$. By using the first condition in Definition \ref{centralASK}, we can write 

\begin{equation}
    \mathrm{d}Z_{\widetilde{\gamma}_i}=\tau_{ij}\mathrm{d}Z_{\gamma^j} 
\end{equation}
for $\mathbb{C}$-valued functions $\tau_{ij}$ on $U\subset M$.\\

The second condition in Definition \ref{centralASK} then implies

\begin{equation}\label{symmtau}
    0=\langle \mathrm{d}Z \wedge \mathrm{d}Z \rangle=2\mathrm{d}Z_{\widetilde{\gamma}_i}\wedge \mathrm{d}Z_{\gamma^i}=2\tau_{ij}\mathrm{d}Z_{\gamma^j}\wedge \mathrm{d}Z_{\gamma^i}=2\sum_{i<j}(\tau_{ij}-\tau_{ji})\mathrm{d}Z_{\gamma^i}\wedge \mathrm{d}Z_{\gamma^j} \, ,
\end{equation}
and hence $\tau_{ij}=\tau_{ji}$. From (\ref{symmtau}) we also find that $Z_{\widetilde{\gamma}_i}\mathrm{d}Z_{\gamma^i}$ is closed. Hence, by possibly shrinking $U$, we can find $\mathfrak{F}:U \to \mathbb{C}$ such that

\begin{equation}
    \frac{\partial \mathfrak{F}}{\partial Z_{\gamma^i}}=Z_{\widetilde{\gamma}_i}, \;\;\;\;\; \frac{\partial^2 \mathfrak{F}}{\partial Z_{\gamma^i} \partial Z_{\gamma^j}}=\tau_{ij} \, .
\end{equation}

Finally, the third condition in Definition \ref{ASKgeometry} implies that

\begin{equation}
    \omega=\frac{1}{4}\langle \mathrm{d}Z \wedge \mathrm{d}\overline{Z} \rangle=\frac{1}{4}(\mathrm{d}Z_{\widetilde{\gamma}_i}\wedge \mathrm{d}\overline{Z}_{\gamma^i}-\mathrm{d}Z_{\gamma^i}\wedge \mathrm{d}\overline{Z}_{\widetilde{\gamma}_i})=\frac{i}{2}\text{Im}(\tau_{ij})\mathrm{d}Z_{\gamma^i}\wedge \mathrm{d}\overline{Z}_{\gamma^j}
\end{equation}
is a K\"{a}hler form for the complex structure $J$, corresponding to a possibly indefinite K\"{a}hler metric.\\

We then recover the usual local formulas of an ASK geometry. The function $\mathfrak{F}$ from above is a holomorphic prepotential describing locally the ASK geometry. We furthermore remark that $(Z_{\gamma^i})$ and $(Z_{\widetilde{\gamma}_i})$ are a conjugate system of holomorphic special coordinates for each choice of local sections of $\Gamma$ projecting to a Darboux frame $(\widetilde{\gamma}_i,\gamma^i)$ of $\Gamma_g$.
\end{proof}

\subsection{The ASK geometry of the resolved conifold in terms of central charges}\label{ASKgeometry}

Our aim here will be to specify a central charge ASK geometry $(M,\Gamma,Z)$ associated to the resolved conifold. In order to define this tuple, we take into account the periods defined in (\ref{perioddef}). From them we see that we have a natural choice of flavor period, given by $\varpi^0$; and two natural choices of holomorphic prepotentials given by $\pm F_0(t)$, where $t=\varpi^1$. We therefore want to define $(M,\Gamma,Z)$ in such a way that $Z$ encodes the periods $\varpi^1=t$ and $\varpi^2=F_t$ \footnote{The additional third period $\varpi^3=2F_0-tF_t$ in (\ref{periods}) does not give any new information, since it again contains $F_0$ and its derivative $F_t$.}. \\

We define $(M,\Gamma,Z)$ as follows:
 
 \begin{itemize}
     \item $M\subset \mathbb{C}^{\times}:=\mathbb{C}-\{0\}$ is a complex manifold of dimension $1$ given by
     \begin{equation}
        M:=\{ t\in \mathbb{C}^{\times} \;\; |  \;\; |\text{Re}(t)|<\frac{1}{2}, \;\;\;\; 2\text{Re}(e^{2\pi it}) \neq 1\} \, .
    \end{equation}
    The first constraint is due to the principal branch of the log in $\varpi^1=t=\frac{1}{2\pi i}\log(z)$ in Appendix \ref{appendix:mirrorsymmetry}. We could in principle  extend $t$ past $|\text{Re}(t)|<\frac{1}{2}$, but for simplicity we will consider only this restricted region. On the other hand the second constraint $2\text{Re}(e^{2\pi it}) \neq 1$ will be required in order to have $\text{Im}(\tau)\neq 0$, and hence define a non-degenerate $2$-form $\omega$.  
    \item In order to define $\Gamma \to M$, we first consider the open subset $M_0\subset M$ defined by
    \begin{equation}
        M_0:=\{ t\in M \;\; |  \;\;  t\not \in i \mathbb{R}_{\leq 0}\} \, .
    \end{equation}
    This is the region inside $M$ where the principal branch of $\text{Li}_2(e^{2\pi it})$ in $\varpi^2$ is defined.\\
    
    We define $\Gamma|_{M_0} \to M_0$ as a trivial local system of rank $3$, having a global trivialization by $(\delta,\beta,\beta^{\vee})$ and pairing defined by 
    \begin{equation}
        \langle \delta, \beta \rangle=0, \;\;\; \langle \delta, \beta^{\vee} \rangle=0,\;\;\; \langle \beta^{\vee}, \beta \rangle=1  \,.
    \end{equation}
    We now define $\Gamma \to M$ by declaring $\delta$ and $\beta$ to be global sections, while  $\beta^{\vee}$ has the following jump as $t$ goes through the ray $i\mathbb{R}_{\leq 0}$ anti-clockwise
    \begin{equation}
        \beta^{\vee}\to \beta^{\vee}+\beta \,.
    \end{equation}
    This jump preserves the pairing, so $\langle -, - \rangle$ extends to a pairing on $\Gamma \to M$. Clearly $\Gamma_f=\mathbb{Z}\delta$, and $\Gamma_g$ has $(\beta^{\vee},\beta)$ as a local Darboux frame for the induced pairing.\\
    
    \item To define $Z:M \to \Gamma^*\otimes \mathbb{C}$ we first define $Z:M_0\to \Gamma^*\otimes \mathbb{C}$ by
    \begin{equation}
        \begin{split}
        Z_{\delta}&:=\varpi^0=1 \,,\\
         Z_{\beta}&:=\varpi^1=t \,,\\
         Z_{\beta^{\vee}}:=-\varpi^2=-\frac{1}{(2\pi i)^2} \Big( \frac{1}{2} \Big(&\log(e^{2\pi i t})\Big)^2 + \textrm{Li}_2(e^{2\pi i t})\Big)\,,\\
        \end{split}
    \end{equation}
    where $\varpi^0$, $\varpi^1$ and $\varpi^2$ are the periods from before (\ref{perioddef}). 
    \begin{rem}
    We have picked $-\varpi^2$ instead of $\varpi^2$ in order for the ASK geometry to be positive definite near $t=0$, as we will see below. This choice will also be important for the relation with the Ooguri-Vafa metric in Section \ref{ovsection}.
    \end{rem}
    Clearly $Z_{\delta}$ and $Z_{\beta}$ extend to all of $M$. For $Z_{\beta^{\vee}}$ we notice that as $t$ crosses the ray $i\mathbb{R}_{\leq 0}$ anti-clockwise, then
    \begin{equation}
        Z_{\beta^{\vee}}\to Z_{\beta^{\vee}}+\frac{\log(e^{2\pi i t})}{2\pi i}=Z_{\beta^{\vee}}+t=Z_{\beta^{\vee}}+Z_{\beta} \, .
    \end{equation}
    Since this matches jump of $\beta^{\vee}$, we see that $Z$ extends to a global section of $\Gamma^*\otimes \mathbb{C}\to M$.
 \end{itemize}

A holomorphic prepotential $\mathfrak{F}:M \to \mathbb{C}$ for the ASK geometry is given by

\begin{equation}
    \mathfrak{F}(t) :=-F_0(t)=-\frac{1}{(2\pi i)^3}\left( \frac{1}{3!} (\log(e^{2\pi i t}))^3+ \textrm{Li}_3(e^{2\pi it})\right)\, .
\end{equation}
The  K\"{a}hler form for the ASK geometry is then given by the usual formula

\begin{equation}
    \omega=\frac{i}{2}\text{Im}(\tau)\mathrm{d}t\wedge \mathrm{d}\overline{t}, \;\;\;\;\;\;\; \tau:=\frac{\partial^2 \mathfrak{F}}{\partial t^2}=\frac{1}{2\pi i}\Big(\log(1-e^{2\pi it})-\log(e^{2\pi it})\Big) \,.
\end{equation}
Furthermore, we remark again that the constraint $2\text{Re}(e^{2\pi i t}) \neq 1$ in the definition of $M$ implies that 

\begin{equation}
    \text{Im}(\tau)=-\frac{1}{2\pi}\log\Big|\frac{1-e^{2\pi it}}{e^{2\pi it}}\Big|\neq 0 \,,
\end{equation}
and hence $\omega$ is non-degenerate on $M$. Notice that $2\text{Re}(e^{2\pi it})\neq 1$ divides $M$ into two components $M=M_{+}\cup M_{-}$. $M_{-}$ contains the ray $i\mathbb{R}_{>a}$ for $a>0$ sufficiently big, while $M_{+}$ contains all $t$ sufficiently close to $0$ and the ray $i\mathbb{R}_{<0}$. Furthermore, it is easy to check that $\text{Im}(\tau)<0$ on $M_-$ while $\text{Im}(\tau)>0$ on $M_+$. We therefore obtain

\begin{prop}
$(M,\Gamma,Z)$ is a central charge ASK geometry. If $g_{sK}:=\omega(-,J-)$ denotes the ASK metric, then $g_{sK}$ is a positive definite metric on $M_{+}$,  and a negative definite metric on $M_{-}$.
\end{prop}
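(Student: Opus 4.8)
The plan is to verify in turn the three defining conditions of a central charge ASK geometry (Definition \ref{centralASK}) for the tuple $(M,\Gamma,Z)$ constructed above, and then to read off the signature of $g_{sK}=\omega(-,J-)$ from the sign of $\text{Im}(\tau)$. The key simplification throughout is that $\dim_{\mathbb{C}}M=1$, so there is a single gauge-charge Darboux pair $(\widetilde\gamma_1,\gamma^1)=(\beta^\vee,\beta)$ and every computation involving $\Gamma_g$ collapses to a one-dimensional one. At the outset I would record that $Z$ is holomorphic, that $Z_\delta\equiv 1$ is constant as required of a flavor period, and that $Z$ is a genuine global section of $\Gamma^*\otimes\mathbb{C}$; the last point has already been checked in the construction, where the monodromy of $Z_{\beta^\vee}$ around $i\mathbb{R}_{\le 0}$ was matched to the jump $\beta^\vee\to\beta^\vee+\beta$.

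For the first condition I would use that $dZ_\beta=dt$ is nowhere vanishing, since $t$ is by construction a global holomorphic coordinate on $M\subset\mathbb{C}^\times$; hence $dZ_\beta$ is a coframe on the complex curve $M$. For the second condition, the computation in the proof of Proposition \ref{propASK} specializes, for $r=1$, to $\langle dZ\wedge dZ\rangle=2\,dZ_{\beta^\vee}\wedge dZ_\beta$; writing $dZ_{\beta^\vee}=\tau\,dt$ this equals $2\tau\,dt\wedge dt=0$, which vanishes automatically because $M$ is one-dimensional. For the third condition I would invoke $\omega=\tfrac{i}{2}\text{Im}(\tau)\,dt\wedge d\overline{t}$, which is non-degenerate precisely when $\text{Im}(\tau)\ne 0$; this is guaranteed by the defining constraint $2\text{Re}(e^{2\pi i t})\ne 1$ together with the explicit formula $\text{Im}(\tau)=-\tfrac{1}{2\pi}\log\big|\tfrac{1-e^{2\pi i t}}{e^{2\pi i t}}\big|$ recorded above. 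This establishes that $(M,\Gamma,Z)$ is a central charge ASK geometry, and by Proposition \ref{propASK} that $(M,\omega)$ is ASK with prepotential $\mathfrak{F}=-F_0$.

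For the signature statement I would compute the metric explicitly and then track the sign of $\text{Im}(\tau)$. Writing $t=x+iy$ one has $dt\wedge d\overline{t}=-2i\,dx\wedge dy$, so $\omega=\text{Im}(\tau)\,dx\wedge dy$, and with the standard complex structure $J\partial_x=\partial_y$ the metric is $g_{sK}=\omega(-,J-)=\text{Im}(\tau)\,(dx^2+dy^2)$; thus $g_{sK}$ is positive (resp.\ negative) definite exactly where $\text{Im}(\tau)>0$ (resp.\ $<0$). To locate these regions I set $q=e^{2\pi i t}$: the recorded formula gives $\text{Im}(\tau)>0\iff|1-q|<|q|\iff 1-2\text{Re}(q)<0\iff 2\text{Re}(q)>1$, and symmetrically for the reverse inequality. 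Since $M=M_+\cup M_-$ is by definition the partition according to $2\text{Re}(e^{2\pi i t})\gtrless 1$, and since $t\to 0$ gives $q\to 1$ with $2\text{Re}(q)\to 2>1$, the component $M_+$ containing a neighbourhood of $0$ is precisely where $\text{Im}(\tau)>0$ and $M_-$ where $\text{Im}(\tau)<0$; combined with the metric computation this yields positive definiteness on $M_+$ and negative definiteness on $M_-$.

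I do not expect a genuine obstacle: the one-dimensionality trivializes the symmetry and closedness conditions of Definition \ref{centralASK}, so the only real content is the elementary equivalence $|1-q|<|q|\iff 2\text{Re}(q)>1$ together with the bookkeeping that matches the two components $M_\pm$ to the two signs of $\text{Im}(\tau)$. The one point to watch is the complex-structure and orientation convention, which fixes the overall sign in $g_{sK}=\text{Im}(\tau)\,(dx^2+dy^2)$ and hence the correct assignment of definiteness to $M_+$ and $M_-$.
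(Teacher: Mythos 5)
Your proposal is correct and follows essentially the same route as the paper: verify the first condition from $\mathrm{d}Z_{\beta}=\mathrm{d}t$, the second from $\langle \mathrm{d}Z\wedge \mathrm{d}Z\rangle=2\tau\,\mathrm{d}t\wedge \mathrm{d}t=0$, and deduce non-degeneracy and the signatures on $M_{\pm}$ from the sign of $\text{Im}(\tau)$. The only difference is that you spell out the elementary equivalence $|1-q|<|q|\iff 2\,\text{Re}(q)>1$ and the real-coordinate form $g_{sK}=\text{Im}(\tau)(dx^2+dy^2)$, which the paper leaves as "previous arguments about $\text{Im}(\tau)$".
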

\begin{proof}
The first condition of Definition \ref{centralASK} follows from the fact that in the local Darboux frame $(\beta^{\vee},\beta)$ of $\Gamma_g$ we have $\mathrm{d}Z_{\beta}=\mathrm{d}t$, while $\langle \mathrm{d}Z\wedge \mathrm{d}Z\rangle=2\mathrm{d}Z_{\beta^{\vee}}\wedge \mathrm{d}Z_{\beta}=0$ follows from $\mathrm{d}Z_{\beta^{\vee}}=\tau \mathrm{d}Z_{\beta}=\tau \mathrm{d}t$. The non-degeneracy condition of the K\"{a}hler form and the statements about the signatures then follow from our previous arguments about $\text{Im}(\tau)$.
\end{proof}

\subsection{The semi-flat HK geometry associated to an ASK manifold}

Given an ASK manifold $(M,\omega)$ of signature $(n,m)$, one can always associate an hyperk\"{a}hler manifold $(\mathcal{N},g^{\text{sf}},I_1,I_2,I_3)$ of signature $(2n,2m)$ via the rigid c-map construction \cite{TypeIIgeometry,Freed,ACD}. Here $\mathcal{N}$ is the total space of a torus bundle $\pi:\mathcal{N} \to M$, and the metric $g^{\text{sf}}$ is usually known as the semi-flat metric, since it restricts to a flat metric on the fibers of $\pi$. 
Below we recall a convenient twistor space description of this metric \cite{GMN1,NewCHK}, in the case that the $(M,\omega)$ is described via $(M,\Gamma,Z)$. This description via twistor Darboux coordinates will be convenient for the later sections.\\

Given the charge lattice $\Gamma \to M$, we fix a homomorphism $\theta_{f}:\Gamma_f\to \mathbb{R}/2\pi \mathbb{Z}$, and define $\pi: \mathcal{N}(\theta_f)\to M$ as the bundle with fiber

\begin{equation}
    \mathcal{N}(\theta_f)_p:=\{\theta: \Gamma_p \to \mathbb{R}/2\pi \mathbb{Z} \;\; | \;\; \theta_{\gamma+\gamma'}=\theta_{\gamma}+\theta_{\gamma'}, \;\;\; \theta|_{\Gamma_f}=\theta_f \} \,.
\end{equation}
We will consider the evaluation map $\theta: \mathcal{N}(\theta_f) \to \Gamma^*\otimes \mathbb{R}/2\pi \mathbb{Z}$ and denote by $\theta_{\gamma}: \mathcal{N}(\theta_f) \to \mathbb{R}/2\pi \mathbb{Z}$ the corresponding evaluation at $\gamma \in \Gamma$.\\

Given a section $\gamma$  of $\Gamma$ over $U\subset M$ and $\zeta \in \mathbb{C}^{\times}$, one can define $\mathcal{X}_{\gamma}^{\text{sf}}:\pi^{-1}(U)\times \mathbb{C}^{\times} \to \mathbb{C}^{\times}$ by

\begin{equation}\label{sftwistcoord}
    \mathcal{X}_{\gamma}^{\text{sf}}(\zeta):=\exp\Big(\pi \frac{Z_{\gamma}}{\zeta}+i\theta_{\gamma}+\pi \zeta \overline{Z}_{\gamma}\Big) \,,
\end{equation}
where the pullbacks of $Z_{\gamma}$ by $\pi$ are suppressed (the $\pi$'s in the above expression refer to the mathematical constant). 
We will refer to  $\mathcal{X}_{\gamma}^{\text{sf}}(\zeta)$ as the semi-flat twistor coordinates. \\

The $\mathcal{O}(2)$-twisted family of holomorphic symplectic forms describing the semiflat HK geometry on $\mathcal{N}(\theta_f)$ is then given by

\begin{equation}\label{twistholsym}
    \zeta\varpi^{\text{sf}}(\zeta)\otimes \partial_{\zeta}=\frac{\zeta}{8\pi^2}\langle \mathrm{d}\log(\mathcal{X}^{\text{sf}}(\zeta))\wedge \mathrm{d}\log(\mathcal{X}^{\text{sf}}(\zeta))\rangle \otimes \partial_{\zeta} \,,
\end{equation}
where $\mathrm{d}$ differentiates along only the $\mathcal{N}(\theta_f)$ directions\footnote{ By this statement we mean the directions of the total space of the bundle $\mathcal{N}(\theta_f)\to M$. In other words $\mathrm{d}$ in \eqref{twistholsym} differentiates in all directions expect in the twistor parameter $\zeta \in \mathbb{C}^{\times}$.} and $\zeta \in \mathbb{C}\subset \mathbb{C}P^1$ is a linear holomorphic coordinate. In particular, one has the following expansion near $\zeta=0$:

\begin{equation}
    \zeta \varpi^{\text{sf}}(\zeta)=-\frac{i}{2}(\omega_1^{\text{sf}}+i\omega_2^{\text{sf}}) + \zeta\omega_3^{\text{sf}} -\frac{i}{2}\zeta^2 (\omega_1^{\text{sf}}-i\omega_2^{\text{sf}}) \,,
\end{equation}
where $\omega_{\alpha}^{\text{sf}}$ for $\alpha=1,2,3$ correspond to a triple of K\"{a}hler forms for the semi-flat hyperk\"{a}hler structure. \\

The $\omega_{\alpha}^{\text{sf}}$ have the following particularly simple formulas in terms of the central charge $Z$ and the evaluation map $\theta$ (below, the pullback of $Z$ to $\mathcal{N}(\theta_f)$ is suppressed):

\begin{equation}    
    \begin{split}
    \omega_1^{\text{sf}}+i\omega_2^{\text{sf}}&=-\frac{1}{2\pi}\langle \mathrm{d}Z\wedge \mathrm{d}\theta \rangle \;,\\
    \omega_3^{\text{sf}}&=\frac{1}{4}\langle \mathrm{d}Z\wedge \mathrm{d}\overline{Z}\rangle - \frac{1}{8\pi^2}\langle \mathrm{d}\theta \wedge \mathrm{d}\theta \rangle\,.\\
    \end{split}
\end{equation}

In order to write down the metric, we will use the following formulas. With respect to local sections $(\widetilde{\gamma}_i,\gamma^i)$ of $\Gamma$ projecting to a local Darboux frame of $\Gamma_g$, we define $\tau_{ij}$ via

\begin{equation}
    \mathrm{d}Z_{\widetilde{\gamma_i}}=\tau_{ij}\mathrm{d}Z_{\gamma^j} \,,
\end{equation}
and define

\begin{equation}
    N_{ij}:=\text{Im}(\tau_{ij}), \;\;\;\;\; W_i:=\mathrm{d}\theta_{\widetilde{\gamma}_i}-\tau_{ij}\mathrm{d}\theta_{\gamma^j} \,.
\end{equation}

The corresponding semiflat HK metric $g^{\text{sf}}$ on $\mathcal{N}(\theta_f)$ then has the following local form

\begin{equation}\label{semiflatHKmetric}
    g^{\text{sf}}=N_{ij}\mathrm{d}Z_{\gamma^i}\mathrm{d}\overline{Z}_{\gamma^j} + \frac{1}{4\pi^2}N^{ij}W_i\overline{W}_j \,.
\end{equation}
This formula follows from

\begin{equation}
    \begin{split}
        \omega_3^{\text{sf}}&=\frac{1}{4}\langle \mathrm{d}Z\wedge \mathrm{d}\overline{Z}\rangle - \frac{1}{8\pi^2}\langle \mathrm{d}\theta \wedge \mathrm{d}\theta \rangle \,,\\
        &=\frac{i}{2}N_{ij}\mathrm{d}Z_{\gamma^i}\wedge \mathrm{d}\overline{Z}_{\gamma^j}+\frac{i}{8\pi^2}N^{ij}W_i\wedge \overline{W}_j\,,
    \end{split}
\end{equation}
and the fact that $\mathrm{d}Z_{\gamma^i}$ and $W_i$ are of type $(1,0)$ with respect to the complex structure $I_3$.\\

Furthermore, if $N_{ij}$ has signature $(n,m)$, it is easy to check that $g^{\text{sf}}$ must have signature $(2n,2m)$.

\subsection{The semi-flat geometry associated to the resolved conifold}

Now let $(M,\Gamma,Z)$ be the tuple defining the ASK geometry associated to the resolved conifold in Section \ref{ASKgeometry}. \\

Fixing a homomorphism $\theta_f: \Gamma_f \to \mathbb{R}/2\pi \mathbb{Z}$ (which in this case is equivalent to fixing the value of $\theta_{\delta}$), we have the corresponding semi-flat metric $g^{\text{sf}}$ on the total space of $\mathcal{N}(\theta_f)\to M$. In this case, (\ref{semiflatHKmetric}) reduces to the following formula in the local Darboux frame $(\beta^{\vee},\beta)$ of $\Gamma_g$:
\begin{equation}
    \begin{split}
    g^{\text{sf}}&=\text{Im}(\tau)|\mathrm{d}Z_{\beta}|^2 + \frac{\text{Im}(\tau)^{-1}}{4\pi^2}|W|^2\,,\\
    &=\text{Im}(\tau)|\mathrm{d}t|^2 + \frac{\text{Im}(\tau)^{-1}}{4\pi^2}|\mathrm{d}\theta_{\beta^{\vee}}-\tau \mathrm{d}\theta_{\beta}|^2\,,\\
    \end{split}
\end{equation}
where
\begin{equation}
\tau=\frac{1}{2\pi i}\Big(\log(1-e^{2\pi it})-\log(e^{2\pi it})\Big) \,.
\end{equation}

Since $\text{Im}(\tau)>0$ on $M_{+}$ and $\text{Im}(\tau)<0$ on $M_{-}$, we get that the signature $g^{\text{sf}}$ is positive definite on $\mathcal{N}_{+}:=\pi^{-1}(M_+)$, and negative definite on $\mathcal{N}_{-}:=\pi^{-1}(M_{-})$.

\section{The associated instanton corrected hyperk\"{a}hler geometry}\label{instcon}

In this section, we wish to add the data of BPS indices $\Omega(\gamma)$ of the resolved conifold, to the central charge ASK geometry $(M,\Gamma,Z)$ associated to the resolved conifold in Section \ref{ASKgeometry}. From the tuple $(M,\Gamma,Z,\Omega)$ one can then define an ``instanton corrected" HK geometry (correcting the semi-flat HK geometry), which we seek to describe explicitly.\\

In Section \ref{instHKrev} we start by reviewing the construction of instanton corrected HK geometries from the physics literature \cite{GMN1, NewCHK}\footnote{For the corresponding constructions in the physics literature for quaternionic-K\"{a}hler geometry, see for example the reviews \cite{HMreview1,HMreview2} and references therein.}. We then specialize to the case where the instanton corrections are mutually local (see Definition \ref{defmutloc}), which will be the one needed for the instanton corrected HK geometry associated to the resolved conifold. A full mathematical treatment of the mutually local case was given in \cite[Section 3]{QKBPS}, so we will review some results of \cite{QKBPS}, and discuss some slight extensions to include flavor charges. \\

In Section \ref{instconifold} we specialize the general formulas of Section \ref{instHKrev} to the HK geometry associated to the resolved conifold, and give explicit formulas for the HK metric. Finally, in Section \ref{ovsection} we compare the HK metric associated to the resolved conifold near $t=0$ to the so-called Ooguri-Vafa metric (see \cite{LCS,NewCHK}). We will see that the HK structure associated to the resolved conifold admits an extension over the locus $t=0$, realizing a specific case of a conjecture of \cite[Section 7]{NewCHK}.

\subsection{Review of instanton corrected HK geometries}\label{instHKrev}

We start by recalling the notion of variation of BPS structures \cite{BridgelandDT}. 
\begin{dfn}\label{varBPS} A variation of BPS structures is given by a tuple $(M,\Gamma,Z,\Omega)$, where 

\begin{itemize}
    \item $M$ is a complex manifold. 
    \item $\Gamma \to M$ is a local system of lattices with a skew-symmetric, covariantly constant paring $\langle - , - \rangle: \Gamma \times \Gamma \to \mathbb{Z}$. As in Section \ref{ASKgeometrygeneral}, we refer to $\Gamma\to M$ as the charge lattice. 
    \item $Z$ is a holomorphic section of $\Gamma^*\otimes \mathbb{C} \to M$. We refer to $Z$ as the central charge. 
    \item $\Omega : \Gamma \to \mathbb{Z}$ is a function (of sets) satisfying $\Omega(\gamma)=\Omega(-\gamma)$ and the Kontsevich-Soibelman wall-crossing formula \cite{KS, BridgelandDT,NewCHK}. We refer to $\Omega$ as the BPS indices. 
\end{itemize}

Furthermore, the tuple $(M,\Gamma,Z,\Omega)$ should satisfy the following conditions:

\begin{itemize}
\item Support property: Let $\text{Supp}(\Omega):=\{\gamma \in \Gamma \;\; | \;\; \Omega(\gamma)\neq 0\}$. Given a compact set $K\subset M$ and a choice of covariantly constant norm $|\cdot |$ on $\Gamma|_K \otimes_{\mathbb{Z}}\mathbb{R} $, there is a constant $C>0$ such that for any  $\text{Supp}(\Omega)\cap \Gamma|_K$ the following holds:
    \begin{equation} \label{supportproperty}
        |Z_{\gamma}|>C|\gamma| \,.
    \end{equation}
    
    \item Convergence property: for each $p\in M$, there is an $R>0$ such that
    \begin{equation}
        \sum_{\gamma \in \Gamma_p}|\Omega(\gamma)|e^{-R|Z_{\gamma}|}<\infty \,,
    \end{equation}
    where $\Gamma_p$ denotes the fiber of $\Gamma$ over $p$.
\end{itemize}
\end{dfn}
\begin{rem}
As a consequence of the BPS indices $\Omega$ obeying the wall-crossing formula, the following holds:
\begin{itemize}
    \item  Consider the real codimension $1$ subset $\mathcal{W}\subset M$ (the so called ``wall" or ``walls") defined by
    \begin{equation}\label{wall}
            \mathcal{W}:=\{p\in M \;\; | \;\; \exists \gamma, \gamma' \in  \Gamma_p, \;\; \langle \gamma, \gamma' \rangle\neq 0, \;\; Z_{\gamma}/Z_{\gamma'}\in \mathbb{R}_{>0} \} \,.
    \end{equation}
   Then for a local section $\gamma$ of $\Gamma$, the BPS index $\Omega(\gamma)$ is locally constant on $M\backslash \mathcal{W}$. Furthermore, the $\Omega(\gamma)$ jumps across $\mathcal{W}$, and the discontinuity is determined by the Kontsevich-Soibelman wall-crossing formula.
   \item $\Omega$ is monodromy invariant. That is, if $\gamma$ has monodromy $\gamma \to \gamma'$ around a loop, then $\Omega(\gamma)=\Omega(\gamma')$.
    
\end{itemize}
\end{rem}

We will focus on the following types of variations of BPS structures:

\begin{dfn}\label{HKBPS} A hyperk\"{a}hler (HK) variation of BPS structures is a variation of BPS structures $(M,\Gamma,Z,\Omega)$ such that:

\begin{itemize}
    \item $(M,\Gamma,Z)$ is a central charge ASK geometry as in Definition \ref{centralASK}.
    \item Strong convergence property: for any $R>0$ the series
    \begin{equation}\label{convergenceproperty}
        \sum_{\gamma \in \Gamma}|\Omega(\gamma)|e^{-R|Z_{\gamma}|}
    \end{equation}
    converges normally on compact subsets of $M$.
\end{itemize}
\end{dfn}
\begin{rem}
The strong convergence property appeared in \cite{QKBPS}. This condition was used to ensure smoothness of the triple of instanton corrected hyperk\"{a}hler forms in the simpler case of mutually local variations of BPS structures (see Definition \ref{defmutloc}). There may be a weaker condition that also ensures this, but for the purposes of this work this condition suffices. 
\end{rem}
In \cite{GMN1,NewCHK}, it is argued that one can construct an ``instanton corrected" hyperk\"{a}hler structure out of an HK variation of BPS structures.  We now briefly review the main points of this construction, and then restrict to the simpler mutually local case. As we remarked before, a mathematical treatment for the mutually local case can be found in \cite[Section 3]{QKBPS}.

\subsubsection{GMN construction of instanton corrected HK geometries}

Let $(M,\Gamma,Z,\Omega)$ be an HK variation of BPS structures and fix a homomorphism $\theta_f: \Gamma_f \to \mathbb{R}/2\pi \mathbb{Z}$. We first define the bundle $\pi: \mathcal{M}(\theta_f)\to M$ of ``twisted" unitary characters as the bundle with fiber over $p\in M$ given by

\begin{equation}
    \mathcal{M}(\theta_f)_p:=\{ \theta: \Gamma_p \to \mathbb{R}/2\pi \mathbb{Z} \;\;\; | \;\;\; \theta_{\gamma + \gamma'}=\theta_{\gamma}+\theta_{\gamma'}+\pi \langle \gamma, \gamma' \rangle,\;\;\;\; \theta|_{\Gamma_{f}}=\theta_f\} \,.
\end{equation}

One should then find locally defined functions $\mathcal{X}_{\gamma}:U\subset \mathcal{M}(\theta_f)\times \mathbb{C}^{\times}\to \mathbb{C}^{\times}$, labeled by local sections $\gamma$ of $\Gamma|_{\pi(U)}$, and satisfying the GMN equations\footnote{These equations are also known as the TBA equations, due to the similarity to the Thermodynamic Bethe Ansatz equations. See for example \cite[Appendix E]{GMN1}.}:
    
    \begin{equation}\label{GMNeq}                    \mathcal{X}_{\gamma}(x,\zeta)=
    \mathcal{X}^{\text{sf}}_{\gamma}(x,\zeta)\exp \Big[-\frac{1}{4\pi i}\sum_{\gamma'\in \Gamma_{\pi(x)}}\Omega(\gamma')\langle \gamma,\gamma' \rangle\int_{\mathbb{R}_{-}Z_{\gamma'}}\frac{d\zeta'}{\zeta'}\frac{\zeta'+\zeta}{\zeta'-\zeta}\log(1-\mathcal{X}_{\gamma'}(x,\zeta'))\Big] \,,
    \end{equation}
    where $\mathcal{X}^{\text{sf}}_{\gamma}(x,\zeta)$ are the semi-flat twistor coordinates given by formula (\ref{sftwistcoord})  and $(x,\zeta)\in \mathcal{M}(\theta_f)\times \mathbb{C}^{\times}$.\\
    
For a fixed $x \in \mathcal{M}(\theta_f)$ the functions $\mathcal{X}_{\gamma}(x,\zeta)$ have discontinuities along rays $\mathbb{R}_{-}Z_{\gamma'}$ for which $\gamma' \in \text{Supp}(\Omega)$ (the so-called ``BPS rays"). Furthermore, the twist on unitary characters and the GMN equations also ensures that they satisfy the identity

\begin{equation}
    \mathcal{X}_{\gamma}\mathcal{X}_{\gamma'}=(-1)^{\langle \gamma, \gamma' \rangle}\mathcal{X}_{\gamma+\gamma'} \,,
\end{equation}
which is important for the wall-crossing formalism of Kontsevich-Soibelman \cite{KS}.\\

Finally, the functions $\mathcal{X}_{\gamma}(x,\zeta)$ are used to define a $\mathbb{C}^{\times}$-family of $2$-forms on $\mathcal{M}(\theta_{f})$ by the formula
    \begin{equation}\label{O2holsym}
    \varpi(\zeta)=\frac{1}{8\pi^2 }\langle \mathrm{d}\log(\mathcal{X}(\zeta))\wedge \mathrm{d}\log(\mathcal{X}(\zeta))\rangle \,,
    \end{equation}
where the exterior derivative only differentiates in the $x\in \mathcal{M}(\theta_{f})$ directions. The crucial points are that the discontinuities in $\zeta$ of $\mathcal{X}_{\gamma}(x,\zeta)$ are such that they leave $\varpi(\zeta)$ invariant; and that the wall-crossing formula ensures that $\varpi(\zeta)$ actually extends over $\mathcal{W}\subset M$, where the BPS indices jump.\\

It is then argued that there is an HK twistor space structure on $\mathcal{M}(\theta_f)\times \mathbb{C}P^1$, whose $\mathcal{O}(2)$-twisted family of holomorphic symplectic forms is given by
\begin{equation}
    \zeta \varpi (\zeta)\otimes \partial_{\zeta} \,,
\end{equation}
with $\varpi(\zeta)$ as in (\ref{O2holsym}).\\

The triple of K\"{a}hler forms for the HK structure on $\mathcal{M}(\theta_f)$ can then be extracted by expanding $\zeta(\varpi(\zeta))$ near $\zeta=0$. Indeed, one obtains

\begin{equation}
        \zeta\varpi(\zeta)=-\frac{i}{2}(\omega_1+i\omega_2) + \zeta\omega_3 -\frac{i}{2}\zeta^2(\omega_1 -i\omega_2) \,,
\end{equation}
where $\omega_{\alpha}$ for $\alpha=1,2,3$ correspond to the triple of K\"{a}hler forms.

\subsubsection{The mutually local case}
For the case of interest for the paper, we will need to restrict the previous construction to the simpler case of mutually local variations of BPS structures, defined as follows.

\begin{dfn}\label{defmutloc} A variation of BPS structures  $(M,\Gamma,Z,\Omega)$ is called mutually local if $\gamma, \gamma' \in \text{Supp}(\Omega)$ implies that $\langle \gamma, \gamma' \rangle=0$.
\end{dfn}

\begin{rem}\leavevmode
\begin{itemize}
    \item In the mutually local case we have no wall-crossing phenomena. In particular, given a local section $\gamma$ of $\Gamma$, $\Omega(\gamma)$ is locally constant on $M$.
    \item We also remark that since there is no wall-crossing, one can work instead on $\pi: \mathcal{N}(\theta_{f})\to M$ whose fiber over $p \in M$ is given by 
\begin{equation}
    \mathcal{N}(\theta_f)_p:=\{ \theta: \Gamma_p \to \mathbb{R}/2\pi \mathbb{Z} \;\;\; | \;\;\; \theta_{\gamma + \gamma'}=\theta_{\gamma}+\theta_{\gamma'},\;\;\;\; \theta|_{\Gamma_{f}}=\theta_f\} \,.
\end{equation}
The total spaces $\mathcal{M}(\theta_f)$ and $\mathcal{N}(\theta_f)$ can be locally identified (non-canonically), but globally they might differ topologically (see the discussion in \cite{GMN1} about this issue). In the following, we work with $\mathcal{N}(\theta_f)$ for simplicity, and for easier comparison with some results of \cite{QKBPS}.
\end{itemize}
\end{rem}

 In the mutually local case the GMN equations reduce to integral formulas, and one can explicitly write down $\varpi(\zeta)$. To see why this is so, let $p:\Gamma \to \Gamma_g$ denote the projection. Then if $\gamma, \gamma' \in p(\text{Supp}(\Omega))$, we must have $\langle\gamma, \gamma'\rangle=0$. In particular, one can find local sections $\{\widetilde{\gamma}_i, \gamma^i\}$ such that  $(p(\widetilde{\gamma}_i),p(\gamma^i))$ is a local Darboux frame of $\Gamma_g$, and such that $p(\text{Supp}(\Omega))\subset \text{Span}_{\mathbb{Z}}\{p(\gamma^i)\}$ (see for example \cite[Lemma 3.14]{QKBPS}). Extending $(\widetilde{\gamma}_i,\gamma^i)$ to a frame $(\widetilde{\gamma}_i,\gamma^i,\gamma_f^j)$ of $\Gamma$, the GMN equations for $\{\mathcal{X}_{\gamma^i}(\zeta),\mathcal{X}_{\widetilde{\gamma}_i}(\zeta),\mathcal{X}_{\gamma_f^j}(\zeta)\}$ reduce to the formulas
 
 \begin{equation}\label{mutloctwistcoord}
     \begin{split}
         \mathcal{X}_{\gamma^i}(x,\zeta)&=\mathcal{X}_{\gamma^i}^{\text{sf}}(x,\zeta)\, ,\\
         \mathcal{X}_{\gamma_f^j}(x,\zeta)&=\mathcal{X}_{\gamma_f^j}^{\text{sf}}(x,\zeta)\,, \\
         \mathcal{X}_{\widetilde{\gamma}_i}(x,\zeta)=\mathcal{X}_{\widetilde{\gamma}_i}^{\text{sf}}(x,\zeta)\exp \Big[-\frac{1}{4\pi i}\sum_{\gamma'\in \Gamma_{\pi(x)}}&\Omega(\gamma')\langle \widetilde{\gamma}_i,\gamma' \rangle\int_{\mathbb{R}_{-}Z_{\gamma'}}\frac{d\zeta'}{\zeta'}\frac{\zeta'+\zeta}{\zeta'-\zeta}\log(1-\mathcal{X}_{\gamma'}^{\text{sf}}(x,\zeta'))\Big]\,.\\
     \end{split}
 \end{equation}

One can then write
 
    \begin{equation}
    \varpi(\zeta)=\frac{1}{8\pi^2 }\langle \mathrm{d}\log(\mathcal{X}(\zeta))\wedge \mathrm{d}\log(\mathcal{X}(\zeta))\rangle=\frac{1}{4\pi^2}\mathrm{d}\log(\mathcal{X}_{\widetilde{\gamma}_i}(\zeta))\wedge \mathrm{d}\log(\mathcal{X}_{\gamma^i}^{\text{sf}}(\zeta))\,. 
    \end{equation}
    
 \begin{rem}\label{flavorcontribution}
 \leavevmode
 \begin{itemize}
     \item The sum in the formula for $\mathcal{X}_{\widetilde{\gamma}_i}(x,\zeta)$ converges normally on appropriate compact subsets of the parameters. Indeed, notice that by the support property (\ref{supportproperty}) we have $|Z_{\gamma'}|\to \infty$ as $|\gamma'|\to \infty$ with $\gamma'\in \text{Supp}(\Omega)$, so using that \begin{equation}
    |\mathcal{X}_{\gamma'}^{\text{sf}}(x,\zeta')|\leq \exp(-2\pi |Z_{\gamma'}|), \;\;\; \text{for} \;\;\; \zeta' \in \mathbb{R}_{-}Z_{\gamma'}\,,
    \end{equation}
     we can replace $|\log(1-\mathcal{X}_{\gamma}(x,\zeta'))|$ by $|\mathcal{X}_{\gamma'}(x,\zeta')|$ in estimating the integrals for $|\gamma'|$ big enough. Using the integral estimates from \cite[Section 3.4]{FGS} we then obtain

\begin{equation}
    \begin{split}
     \Big|\int_{\mathbb{R}_{-}Z_{\gamma'}}\frac{d\zeta'}{\zeta'}\frac{\zeta'+\zeta}{\zeta'-\zeta}\log(1-\mathcal{X}_{\gamma'}^{\text{sf}}(x,\zeta'))\Big|&\leq C\int_{0}^{\infty}ds \exp(-2\pi |Z_{\gamma'}|\cosh(s))\\
     &=CK_0(2\pi |Z_{\gamma'}|)\,,
     \end{split}
\end{equation}
where $C$ is a constant depending of the distance of $\zeta$ to $\mathbb{R}_{-}Z_{\gamma'}$, and $K_0$ is a modified Bessel function of the second kind. The convergence then follows from the  asymptotics $K_{0}(s)\sim \sqrt{\frac{2\pi}{s}}e^{-s}(1+\mathcal{O}(1/s))$ as $s\to \infty$ together with the convergence property of the BPS structures (\ref{convergenceproperty}). 
\item  An immediate consequence of (\ref{mutloctwistcoord}) is that if $\gamma_f \in \text{Supp}(\Omega)\cap \Gamma_f$, then $\Omega(\gamma_f)$ does not make any contribution to the twistor coordinates (since $\langle \widetilde{\gamma}_i, \gamma_f \rangle=0$), and hence $\Omega(\gamma_f)$ does not make any contribution to the HK structure described by $\varpi(\zeta)$. 
 \end{itemize}
 
 \end{rem}

The resulting candidate K\"{a}hler forms are then given by (see \cite[Section 4.3 and 5.6]{GMN1} and \cite[Lemma 3.10, Theorem 3.13]{QKBPS}):

\begin{equation}\label{holsymp}
    \omega_1+i\omega_2:=-\frac{1}{2\pi}\langle \mathrm{d}Z\wedge \mathrm{d}\theta \rangle + \sum_{\gamma}\Big(\Omega(\gamma)\mathrm{d}Z_{\gamma}\wedge A_{\gamma}^{\text{inst}}  +\frac{i\Omega(\gamma)}{2\pi }V_{\gamma}^{\text{inst}}\mathrm{d}\theta_{\gamma}\wedge \mathrm{d}Z_{\gamma}\Big)\,,
\end{equation}
\begin{equation}\label{kf}
    \omega_{3}:=\frac{1}{4}\langle \mathrm{d}Z\wedge \mathrm{d}\overline{Z}\rangle-\frac{1}{8\pi^2} \langle \mathrm{d}\theta\wedge \mathrm{d}\theta \rangle +\sum_{\gamma}\Big(\frac{i\Omega(\gamma)}{2}V^{\text{inst}}_{\gamma}\mathrm{d}Z_{\gamma}\wedge \mathrm{d}\overline{Z}_{\gamma}+\frac{\Omega(\gamma)}{2\pi } \mathrm{d}\theta_{\gamma}\wedge A_{\gamma}^{\text{inst}}\Big)\,,
\end{equation}
where 
\begin{equation}
\begin{split}
        V_{\gamma}^{\text{inst}}&:=\frac{1}{2\pi}\sum_{n>0}e^{in\theta_{\gamma}}K_0(2\pi n|Z_{\gamma}|)\,,\\
        A_{\gamma}^{\text{inst}}&:=-\frac{1}{4\pi}\sum_{n>0}e^{in\theta_{\gamma}}|Z_{\gamma}|K_1(2\pi n|Z_{\gamma}|)\Big( \frac{\mathrm{d}Z_{\gamma}}{Z_{\gamma}}-\frac{\mathrm{d}\overline{Z}_{\gamma}}{\overline{Z}_{\gamma}}\Big)\,,
        \end{split}
        \end{equation}
and $K_{\nu}$ for $\nu=0,1$ denote modified Bessel functions of the second kind.

\begin{rem}\leavevmode

\begin{itemize}
\item We remark that the formulas for the K\"{a}hler forms $\omega_{\alpha}$ are global, due to the monodromy invariance of $\Omega$. Furthermore, all the infinite sums converge normally over compact subsets of $\mathcal{N}(\theta_f)$. This is due to the strong convergence property of the variations of the BPS structures, together with the asymptotics of the Bessel functions $K_{\nu}(x)\sim \sqrt{\frac{2\pi}{x}}e^{-x}(1+\mathcal{O}(1/x))$ as $x\to \infty$. For a more detailed argument, see \cite[Lemma 3.9 and Lemma 3.10]{QKBPS}.
\item It is also worth noticing that while the expression of $\varpi(\zeta)$ in the mutually local case can be explicitly computed in terms of Bessel functions as above, the integral formulas in (\ref{mutloctwistcoord}) do not have an explicit form (as far as the authors know).
\end{itemize}
\end{rem}

In order to state precisely over which open subset $N\subset \mathcal{N}(\theta_f)$ the forms $\omega_{\alpha}$ actually define a (possibly indefinite) HK structure, we need the following definition (see also \cite[Definition 3.12]{QKBPS}).

\begin{dfn} Consider a mutually local HK variation of BPS structures $(M,\Gamma,Z,\Omega)$, and a fixed homomorphism $\theta_f:\Gamma_f \to \mathbb{R}/2\pi \mathbb{Z}$.  

\begin{itemize}
\item If $(M,g_{\text{sK}})$ denotes the associated ASK geometry to $(M,\Gamma,Z)$, then we define the following tensor field $T$ on the total space of $\pi:\mathcal{N}(\theta_f)\to M$:

\begin{equation}\label{nondegtensor}
    T:=\pi^*g_{\text{sK}}+ \sum_{\gamma}\Omega(\gamma)V_{\gamma}^{\text{inst}}\pi^*|\mathrm{d}Z_{\gamma}|^2\,.
\end{equation}
\item We denote by $N\subset \mathcal{N}(\theta_{f})$ the open subset where the tensor field $T$ is horizontally non-degenerate with respect to $\pi: \mathcal{N}(\theta_{f})\to M$ (i.e. it is non-degenerate on the normal bundle of the fibers of $\pi: \mathcal{N}(\theta_{f})\to M$).
\end{itemize}

\end{dfn}
\begin{rem}
The horizontal non-degeneracy condition on $T$ is equivalent to the non-degeneracy of $\omega_{\alpha}$ for $\alpha=1,2,3$ (see the proof of \cite[Theorem 3.13]{QKBPS}).
\end{rem}

Finally, in order to write down local explicit expressions for the HK metric, we will need the following:

\begin{dfn} \label{impexp} As before, we pick a local frame $(\widetilde{\gamma}_i,\gamma^i,\gamma_f^i)$ of $\Gamma$, such that $(p(\widetilde{\gamma}_i),p(\gamma^i))$ is a local Darboux frame of $\Gamma_g$, and $p(\text{Supp}(\Omega))\subset \text{Span}_{\mathbb{Z}}\{p(\gamma^i)\}$. Given $\gamma \in \text{Supp}(\Omega)$, we can then write

\begin{equation}
    \gamma=n_{i,g}(\gamma)\gamma^i + n_{i,f}(\gamma)\gamma^i_f
\end{equation}
for uniquely determined $n_{i,g}(\gamma),n_{i,f}(\gamma)\in \mathbb{Z}$.\\

With respect to this frame we define:

\begin{equation}
    W_i:=\mathrm{d}\theta_{\widetilde{\gamma}_i}-\tau_{ij}\mathrm{d}\theta_{\gamma^j}, \;\;\;\;\;\; W_i^{\text{inst}}:=\sum_{\gamma \in \text{Supp}(\Omega)}\Omega(\gamma)n_{i,g}(\gamma)(2\pi A_{\gamma}^{\text{inst}}-iV_{\gamma}^{\text{inst}}\mathrm{d}\theta_{\gamma})\,,
\end{equation}
\begin{equation}
        N_{ij}:=\text{Im}{\tau}_{ij}, \;\;\;\;\;\; N_{ij}^{\text{inst}}:=\sum_{\gamma}\Omega(\gamma)V^{\text{inst}}_{\gamma}n_{i,g}(\gamma)n_{j,g}(\gamma)\,,
\end{equation}
where $\mathrm{d}Z_{\widetilde{\gamma}_i}=\tau_{ij}\mathrm{d}Z_{\gamma^i}$

\end{dfn}
With the previous definitions at hand, we then have the following slight extension of \cite[Theorem 3.13, Lemma 3.15 and Corollary 3.17]{QKBPS} for the case where $\Gamma_f \neq 0$:

\begin{thm}\label{theorem1}
Let $(M,\Gamma,Z,\Omega)$ be a mutually local HK variation of BPS structures and fix a homomorphism $\theta_{f}:\Gamma_f \to \mathbb{R}/2\pi \mathbb{Z}$. Then the $\omega_{\alpha} \in \Omega^2(\mathcal{N}(\theta_f))$ given in (\ref{holsymp}) and (\ref{kf}) define a (possibly-indefinite) HK structure on $N\subset \mathcal{N}(\theta_{f})$. We denote this HK structure by $(N,g_N,\omega_1,\omega_2,\omega_3)$. With respect to the local frame $(\widetilde{\gamma}_i,\gamma^i,\gamma_f^i)$ of $\Gamma$ from Definition \ref{impexp} we have

\begin{equation}
    g_N=\mathrm{d}Z_{\gamma^i}(N_{ij}+N_{ij}^{inst})\mathrm{d}\overline{Z}_{\gamma^j}+\frac{1}{4\pi^2}(W_i+W_i^{inst})(N+N^{inst})^{ij}(\overline{W}_j+\overline{W}_j^{inst})\,.
\end{equation}
Furthermore, 
\begin{equation}\label{holsymw}
    \omega_1+i\omega_2=\frac{1}{2\pi}\mathrm{d}Z_{\gamma^i}\wedge (W_i+W_i^{\text{inst}})\,,
\end{equation}
and
\begin{equation}
    \omega_3=\frac{i}{2}(N_{ij}+N_{ij}^{\text{inst}})\mathrm{d}Z_{\gamma^i}\wedge \mathrm{d}\overline{Z}_{\gamma^j}+\frac{i}{8\pi^2}(N+N^{\text{inst}})^{ij}(W_i+W_i^{\text{inst}})\wedge (\overline{W}_j+\overline{W}_j^{\text{inst}})\,.
\end{equation}
\end{thm}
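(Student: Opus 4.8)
The plan is to deduce the hyperkähler structure from the twistor description rather than to verify the Hermitian and integrability conditions on $\omega_1,\omega_2,\omega_3$ directly. By the twistor construction of \cite{HKSUSY}, a (possibly-indefinite) HK structure on $\mathcal{N}(\theta_f)$ is equivalent to an $\mathcal{O}(2)$-twisted family of holomorphic symplectic forms $\zeta\varpi(\zeta)$ on $\mathcal{N}(\theta_f)\times\mathbb{C}P^1$ that is holomorphic, a degree-$2$ polynomial in $\zeta$, and satisfies the appropriate reality condition. Closedness of each $\varpi(\zeta)$ is automatic here, since $\varpi(\zeta)=\frac{1}{4\pi^2}\mathrm{d}\log(\mathcal{X}_{\widetilde{\gamma}_i}(\zeta))\wedge\mathrm{d}\log(\mathcal{X}_{\gamma^i}^{\text{sf}}(\zeta))$ is a wedge of logarithmic differentials and $\mathrm{d}^2=0$. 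The triple $\omega_\alpha$ is then read off from $\zeta\varpi(\zeta)=-\tfrac{i}{2}(\omega_1+i\omega_2)+\zeta\omega_3-\tfrac{i}{2}\zeta^2(\omega_1-i\omega_2)$, and non-degeneracy of the resulting metric is the only remaining condition. Since the mutually local twistor coordinates are given explicitly in (\ref{mutloctwistcoord}), the proof reduces to a computation expanding $\zeta\varpi(\zeta)$ around $\zeta=0$, an identification of its coefficients with (\ref{holsymp}) and (\ref{kf}), and a transcription of the non-degeneracy criterion of \cite{QKBPS}, all adapted to allow $\Gamma_f\neq 0$.

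For the expansion I would use $\mathrm{d}\log(\mathcal{X}_{\gamma^i}^{\text{sf}}(\zeta))=\pi\,\mathrm{d}Z_{\gamma^i}/\zeta+i\,\mathrm{d}\theta_{\gamma^i}+\pi\zeta\,\mathrm{d}\overline{Z}_{\gamma^i}$ from (\ref{sftwistcoord}), and write $\mathrm{d}\log(\mathcal{X}_{\widetilde{\gamma}_i}(\zeta))$ as the analogous semi-flat piece plus the exterior derivative of the instanton integral. The heart of the calculation is evaluating that integral contribution: parametrizing the BPS ray by $\zeta'=-e^{s}Z_{\gamma'}/|Z_{\gamma'}|$ turns $\mathcal{X}_{\gamma'}^{\text{sf}}(\zeta')$ into $\exp(-2\pi|Z_{\gamma'}|\cosh s+i\theta_{\gamma'})$, so that after expanding $\log(1-\mathcal{X}_{\gamma'}^{\text{sf}})=-\sum_{n>0}\tfrac{1}{n}e^{in\theta_{\gamma'}}e^{-2\pi n|Z_{\gamma'}|\cosh s}$ and differentiating, the $s$-integrals collapse to the Bessel functions $K_0,K_1$ via $K_\nu(x)=\int_0^\infty e^{-x\cosh s}\cosh(\nu s)\,\mathrm{d}s$. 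This is exactly what produces the building blocks $V_\gamma^{\text{inst}}$ and $A_\gamma^{\text{inst}}$. Collecting the $\zeta^{-1},\zeta^0,\zeta^1$ terms of the wedge product then yields (\ref{holsymp}) and (\ref{kf}) (the potentially singular $\zeta^{-2}$ term vanishing by symmetry of $\tau_{ij}$, i.e. $\langle\mathrm{d}Z\wedge\mathrm{d}Z\rangle=0$); the normal convergence needed to justify the term-by-term manipulations is guaranteed by the strong convergence property together with the Bessel asymptotics $K_\nu(x)\sim\sqrt{2\pi/x}\,e^{-x}$, as already recorded in Remark \ref{flavorcontribution}.

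To obtain the metric I would rewrite $\omega_3$ by grouping terms into $N_{ij}+N_{ij}^{\text{inst}}$ and $W_i+W_i^{\text{inst}}$, and observe that $\mathrm{d}Z_{\gamma^i}$ and $W_i+W_i^{\text{inst}}$ are of type $(1,0)$ for the complex structure $I_3$, exactly as in the semi-flat case; the metric $g_N=\omega_3(-,I_3-)$ then takes the claimed block form, while $\omega_1+i\omega_2=\tfrac{1}{2\pi}\mathrm{d}Z_{\gamma^i}\wedge(W_i+W_i^{\text{inst}})$ follows from the $\zeta^0$-coefficient of $\zeta\varpi(\zeta)$. Non-degeneracy of all three $\omega_\alpha$ is then equivalent to horizontal non-degeneracy of the tensor $T$ in (\ref{nondegtensor}), i.e. to invertibility of $N+N^{\text{inst}}$, which is precisely the defining condition of the open set $N\subset\mathcal{N}(\theta_f)$; this is the content carried over from the proof of \cite[Theorem 3.13]{QKBPS}.

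The one genuinely new point, and the place where care is needed, is the extension to $\Gamma_f\neq 0$. Here the fibers of $\mathcal{N}(\theta_f)\to M$ carry the constraint $\theta|_{\Gamma_f}=\theta_f$, so the flavor angles are frozen and do not furnish coordinate directions; correspondingly $\mathrm{d}Z$ takes values in $\Gamma_g^*\otimes\mathbb{C}$ since $Z_{\gamma_f}$ is constant. The key observation is that flavor charges enter the construction only as fixed parameters: a charge $\gamma\in\text{Supp}(\Omega)$ may have a flavor part $n_{i,f}(\gamma)\gamma_f^i$, but this affects the instanton data solely through the constants $Z_{\gamma_f}$ and $\theta_{\gamma_f}=\theta_f$ appearing inside $V_\gamma^{\text{inst}},A_\gamma^{\text{inst}}$, while the pairing in $\varpi(\zeta)$ only ever pairs $\widetilde{\gamma}_i$ against $\gamma^i$ and $\langle\widetilde{\gamma}_i,\gamma_f\rangle=0$. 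Hence only the gauge index $n_{i,g}(\gamma)$ survives in $W_i^{\text{inst}}$ and $N_{ij}^{\text{inst}}$, and the \cite{QKBPS} computation goes through with the instanton sums now ranging over charges with possible flavor components. I expect this bookkeeping — tracking flavor charges as frozen parameters rather than coordinate directions, and confirming global well-definedness of the forms on $\mathcal{N}(\theta_f)$ via monodromy invariance of $\Omega$ — to be the main obstacle, though not a deep one, the underlying analysis being identical to the flavorless case.
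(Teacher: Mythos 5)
Your proposal is correct and follows essentially the same route as the paper, whose entire proof consists of citing Lemma 3.15, Theorem 3.13 and Corollary 3.17 of \cite{QKBPS} and noting that the arguments carry over verbatim to $\Gamma_f\neq 0$ once the expressions of Definition \ref{impexp} are used. You have simply unpacked the content of that citation — the twistor expansion, the Bessel-function evaluation of the instanton integrals, the non-degeneracy criterion via the tensor $T$ — and your identification of the flavor bookkeeping (frozen $\theta_{\gamma_f}$, $\langle\widetilde{\gamma}_i,\gamma_f\rangle=0$, only $n_{i,g}(\gamma)$ surviving) as the only genuinely new step is exactly the point the paper's proof makes.
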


\begin{proof}
This is proven in Lemma $3.15$, Theorem $3.13$ and Corollary $3.17$ of  \cite{QKBPS} for the case $\Gamma_{f}=0$. The same proofs follow for $\Gamma_{f}\neq 0$, provided we use the expressions in Definition \ref{impexp} instead of those is \cite[Lemma 3.15]{QKBPS}.
\end{proof}

\subsection{The instanton corrected HK geometry associated to the resolved conifold}\label{instconifold}

We now restrict to the case of the resolved conifold. We  consider $(M,\Gamma, Z,\Omega)$ where

\begin{itemize}
    \item $(M,\Gamma,Z)$ is the central charge ASK geometry defined in section \ref{ASKgeometry}. 
    \item We define $\Omega: \Gamma \to \mathbb{Z}$ by the usual BPS spectrum associated to the resolved conifold, see e.~g.~\cite{BridgelandCon}:
    \begin{equation}
\Omega(\gamma) =
  \begin{cases}
    1       & \quad \text{if } \gamma = \pm \beta + n \delta\quad \text{for }\;\; n\in \mathbb{Z}\\
    -2  & \quad \text{if } \gamma= k \delta \quad \text{for} \;\;k\in \mathbb{Z}\setminus \{ 0\} \\
    0 & \quad \text{otherwise}\,.
  \end{cases}
\end{equation}
\end{itemize}

It is then easy to check that $(M,\Gamma, Z,\Omega)$ defined as above gives a mutually local HK variation of BPS structures, as in Definition \ref{HKBPS}.\\ 

By fixing a homomorphism $\theta_f:\Gamma_f \to \mathbb{R}/ 2 \pi \mathbb{Z}$, one then obtains an instanton corrected HK manifold $(N\subset \mathcal{N}(\theta_f),g_{N},\omega_1,\omega_2,\omega_3)$ of possibly indefinite signature. In this case, using Theorem \ref{theorem1} we find that the HK metric has an explicit local expression given in terms of the local frame $(\beta^{\vee},\beta,\delta)$ of $\Gamma$ by

\begin{equation}\label{conHKmetric}
    g=(N_{\beta}+N^{\text{inst}}_{\beta})|\mathrm{d}Z_{\beta}|^2+\frac{1}{4\pi^2}(N_{\beta}+N^{\text{inst}}_{\beta})^{-1}|W+W^{\text{inst}}|^2 \,,
\end{equation}
where 
\begin{equation}\label{instW}
    W=\mathrm{d}\theta_{\beta^{\vee}}-\tau \mathrm{d}\theta_{\beta}, \;\;\;\;\;\; W^{\text{inst}}=\sum_{\gamma \in \{\pm \beta +\mathbb{Z}\delta\}}\Omega(\gamma)n_g(\gamma)(2\pi A_{\gamma}^{\text{inst}}-iV_{\gamma}^{\text{inst}}\mathrm{d}\theta_{\gamma})\,,
\end{equation}
\begin{equation}
        N_{\beta}:=\text{Im}(\tau), \;\;\;\;\;\; N^{\text{inst}}_{\beta}:=\sum_{\gamma \in \{\pm\beta +\mathbb{Z}\delta\}}\Omega(\gamma)V^{\text{inst}}_{\gamma}(n_g(\gamma))^2\,.
\end{equation}

\begin{rem}\label{flavorcontributionconifold}
As we previously mentioned in Remark \ref{flavorcontribution}, since the charge $\delta$ is a flavor charge in our setting,  the BPS indices $\Omega(k\delta)=-2$ do not contribute in the instanton corrections of the HK metric.  Hence, the HK geometry we consider only captures the part corresponding to $\Omega(\beta+n\delta)=1$.
\end{rem}

 In particular, when we pick $\theta_f=0$ (i.e. $\theta_{\delta}=0)$ we can say the following about $N\subset \mathcal{N}(\theta_f=0)$:

\begin{prop}\label{Ncar} We denote as before by $M_+\subset M$ the connected component of $M$ containing a neighborhood of $t=0$, where the ASK geometry is positive definite. Then $N\subset \mathcal{N}(\theta_f=0)$ consists of an open subset of $\mathcal{N}(\theta_f=0)$ containing the subset $N_0:=\{\theta\in \mathcal{N}(\theta_f)\; |\; \pi(\theta)\in M_{+},\;\theta_{\beta}=0\}$.

\end{prop}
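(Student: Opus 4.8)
The plan is to characterize $N$ through the tensor $T$ of \eqref{nondegtensor} and to show directly that $T$ is horizontally positive on $N_0$, which gives $N_0\subset N$; openness of $N$ comes for free from its definition as a non-degeneracy locus.

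First I would specialize $T$ to the resolved conifold. Since $\mathrm{d}Z_\delta = 0$, the flavor charges $k\delta$ contribute nothing, while for $\gamma = \pm\beta + n\delta$ one has $\mathrm{d}Z_\gamma = \pm\mathrm{d}t$, hence $|\mathrm{d}Z_\gamma|^2 = |\mathrm{d}t|^2$ and $n_g(\gamma)^2 = 1$. Using $g_{\text{sK}} = \text{Im}(\tau)|\mathrm{d}t|^2$, the tensor collapses to
$$T = (N_\beta + N_\beta^{\text{inst}})\,\pi^*|\mathrm{d}t|^2, \qquad N_\beta = \text{Im}(\tau), \quad N_\beta^{\text{inst}} = \sum_{\gamma\in\{\pm\beta+\mathbb{Z}\delta\}}\Omega(\gamma)V_\gamma^{\text{inst}}.$$
Because $M$ has complex dimension one, horizontal non-degeneracy of $T$ is the single condition $N_\beta + N_\beta^{\text{inst}}\neq 0$, which is manifestly an open condition; this already gives that $N$ is an open subset of $\mathcal{N}(\theta_f=0)$.

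Next I would evaluate $N_\beta^{\text{inst}}$ on the slice $\theta_\beta = 0$ (recall $\theta_\delta = \theta_f = 0$). Writing $Z_{\beta+n\delta} = t+n$, $\theta_{\beta+n\delta} = \theta_\beta$ and $Z_{-\beta+n\delta} = -t+n$, $\theta_{-\beta+n\delta} = -\theta_\beta$, and inserting the definition of $V_\gamma^{\text{inst}}$, the two families $\pm\beta+n\delta$ recombine, after relabeling $n\mapsto -n$ in the $-\beta$ family, into
$$N_\beta^{\text{inst}} = \frac{1}{\pi}\sum_{n\in\mathbb{Z}}\sum_{m>0}\cos(m\theta_\beta)\,K_0(2\pi m|t+n|),$$
which is manifestly real. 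Setting $\theta_\beta = 0$ leaves a sum of positive multiples of $K_0(2\pi m|t+n|)$. The only point to verify is that every argument is strictly positive: $|t+n| = 0$ would force $\text{Re}(t) = -n$ and $\text{Im}(t) = 0$, which is excluded by $t\in M$, since $|\text{Re}(t)|<\tfrac{1}{2}$ rules out $n\neq 0$ and $t\neq 0$ rules out $n = 0$. As $K_0 > 0$ on $(0,\infty)$ and the series converges by the strong convergence property, via the Bessel asymptotics already used in Remark \ref{flavorcontribution}, we conclude $N_\beta^{\text{inst}} > 0$ on the locus $\theta_\beta = 0$.

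Finally I would combine the two signs: on $M_+$ we have $N_\beta = \text{Im}(\tau) > 0$ by the signature statement for the ASK geometry, and the previous step gives $N_\beta^{\text{inst}} > 0$ when $\theta_\beta = 0$, so $N_\beta + N_\beta^{\text{inst}} > 0$ on $N_0$. Thus $T$ is horizontally non-degenerate there, i.e.\ $N_0\subset N$, and together with the openness of $N$ this proves the claim. The only genuinely delicate step is the sign bookkeeping that recombines the $\pm\beta$ families into a manifestly real, positive series; everything else is a direct substitution together with positivity of $K_0$.
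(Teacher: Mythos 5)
Your proof is correct and follows essentially the same route as the paper's: both reduce horizontal non-degeneracy of the tensor $T$ to positivity of the scalar $\mathrm{Im}(\tau)+\frac{1}{\pi}\sum_{n\in\mathbb{Z}}\sum_{m>0}\cos(m\theta_\beta)K_0(2\pi m|t+n|)$ and then use $\mathrm{Im}(\tau)>0$ on $M_+$ together with $K_0>0$ on the slice $\theta_\beta=0$. The paper states this more tersely, while you additionally spell out the recombination of the $\pm\beta+n\delta$ families and the check that $|t+n|>0$; these details are consistent with the paper's formulas (your $|t+n|$ versus the paper's $|t-n|$ is just the relabeling $n\mapsto -n$ over $\mathbb{Z}$).
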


\begin{proof}
This follows immediately by noticing that the tensor $T$ from \eqref{nondegtensor} in this case has the form (we omit pullbacks by $\pi$ from the notation) 

\begin{equation}\label{Ttensorconifold}
    T=\left(\text{Im}(\tau) +\frac{1}{\pi}\sum_{n\in \mathbb{Z}}\sum_{m>0}\cos(m\theta_{\beta})K_0(2\pi|m||t-n|)\right) |dt|^2
\end{equation}
In particular, we have 

\begin{equation}
    T|_{N_0}=\left(\text{Im}(\tau) +\frac{1}{\pi}\sum_{n\in \mathbb{Z}}\sum_{m>0}K_0(2\pi|m||t-n|)\right) |dt|^2
\end{equation} which is horizontally non-degenerate, since $\text{Im}(\tau)>0$ on $M_+$ and $K_0(x)>0$ for $x>0$.
\end{proof}

\begin{rem}\label{remN} Notice that while $N$ contains points with arbitrary $\theta_{\beta^{\vee}}$ value (since $T$ is independent of $\theta_{\beta^{\vee}}$), it is a priori not clear that $T$ remains horizontally non-degenerate for all values of $\theta_{\beta}$. On the other hand, in the construction of instanton corrected HK metrics from \cite{GMN1}, there is an arbitrary free parameter $R>0$ that we have set to $R=1$ in this paper. One can reintroduce this parameter by scaling the central charge $Z\to RZ$ and the twistor family of holomorphic symplectic forms $\varpi(\zeta)\to \frac{1}{R}\varpi(\zeta)$.  Once this is done, it is easy to check that due to exponential decay of the $K_0(x)$ as $x\to \infty$, by taking $R$ very big and $t$ in a small neighborhood of $0$, we can make $\text{Im}(\tau)+ \sum_{m>0}\cos(m\theta_{\beta})K_0(2\pi R|m||t|)$ dominate over the other terms of the sum $\frac{1}{\pi}\sum_{n\in \mathbb{Z}-\{0\}}\sum_{m>0}K_0(2\pi R|m||t-n|)$ of \eqref{Ttensorconifold}. Furthermore, $\left(\text{Im}(\tau)+ \sum_{m>0}\cos(m\theta_{\beta})K_0(2\pi R|m||t|)\right)|dt|^2$ behaves like what one has for the Ooguri-Vafa space (see Section \ref{ovsection} below), which does remain non-degenerate for all values of $\theta_{\beta}$ and $t$ sufficiently small \cite{LCS}. Hence, it seems that by reintroducing a sufficiently big $R$ factor, one can guarantee that $N$ contains a torus fibration over a punctured neighborhood of $t=0$. 

\end{rem}

Using the notation:
\begin{equation}\label{notationinst}
    \begin{split}
    V_{n}^{\text{inst}}&:=V_{\beta-n\delta}^{\text{inst}}+V_{-\beta+n\delta}^{\text{inst}}=\frac{1}{2\pi}\sum_{m \in \mathbb{Z}-\{0\}}e^{im\theta_{\beta-n\delta}}K_0(2\pi |m||t-n|)\,,\\
    A_{n}^{\text{inst}}&:=A_{\beta-n\delta}^{\text{inst}}-A_{-\beta+n\delta}^{\text{inst}}\\
    &=-\frac{1}{4\pi}\Big(\sum_{m \in \mathbb{Z}-\{0\}}\text{sign}(m)e^{im\theta_{\beta-n\delta}}|t-n|K_1(2\pi |m||t-n|)\Big)\Big( \frac{\mathrm{d}t}{t-n}-\frac{\mathrm{d}\overline{t}}{\overline{t}-n}\Big)\,,\\
    \end{split}
\end{equation}
and the explicit values of $\Omega(\gamma)$, we can then rewrite the instanton correction terms as

\begin{equation}
    \begin{split}
    N^{\text{inst}}_{\beta}&:=\sum_{n \in \mathbb{Z}}V^{\text{inst}}_{n}\,,\\
    W^{\text{inst}}&:=\sum_{n \in \mathbb{Z}}\Big(2\pi A_{n}^{\text{inst}}-iV_{n}^{\text{inst}}\mathrm{d}\theta_{\beta}\Big)\,.
    \end{split}
\end{equation}

Hence, using Theorem \ref{theorem1} we find

\begin{cor}\label{HKexplicit}
The instanton corrected HK metric $(N,g_N,\omega_1,\omega_2,\omega_3)$ associated to the resolved conifold has the following explicit form:

\begin{equation}
    \begin{split}
    g_N&=\Big(-\frac{1}{2\pi}\log\Big|\frac{1-e^{2\pi it}}{e^{2\pi it}}\Big|+ \sum_{n \in \mathbb{Z}}V_{n}^{\text{inst}}\Big)|\mathrm{d}t|^2\\
    &+\frac{1}{4\pi^2}\Big(-\frac{1}{2\pi}\log\Big|\frac{1-e^{2\pi it}}{e^{2\pi it}}\Big|+ \sum_{n \in \mathbb{Z}}V_{n}^{\text{inst}}\Big)^{-1}\\
    &\;\;\;\;\;\;\cdot\Big|\mathrm{d}\theta_{\beta^{\vee}}-\frac{1}{2\pi i}\Big(\log(1-e^{2\pi it})-\log(e^{2\pi it})\Big)\mathrm{d}\theta_{\beta}+\sum_{n \in \mathbb{Z}}\Big(2\pi A_{n}^{\text{inst}}-iV_{n}^{\text{inst}}\mathrm{d}\theta_{\beta}\Big)\Big|^2 \,.
    \end{split}
\end{equation}
\end{cor}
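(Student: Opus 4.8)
The goal is Corollary \ref{HKexplicit}, which is simply the specialization of the general metric formula from Theorem \ref{theorem1} to the resolved conifold data. Since Theorem \ref{theorem1} already provides
\begin{equation*}
g_N = \mathrm{d}Z_{\gamma^i}(N_{ij}+N_{ij}^{\text{inst}})\mathrm{d}\overline{Z}_{\gamma^j}+\frac{1}{4\pi^2}(W_i+W_i^{\text{inst}})(N+N^{\text{inst}})^{ij}(\overline{W}_j+\overline{W}_j^{\text{inst}}),
\end{equation*}
the plan is to carry out the rank-one reduction: here $\Gamma_g$ has the single Darboux pair $(\beta^{\vee},\beta)$, $\Gamma_f=\mathbb{Z}\delta$, and $p(\mathrm{Supp}(\Omega))\subset \mathbb{Z}\beta$, so all indices $i,j$ collapse to a single value and the matrices $N_{ij}, N_{ij}^{\text{inst}}$ become the scalars $N_\beta=\mathrm{Im}(\tau)$ and $N_\beta^{\text{inst}}$ of \eqref{conHKmetric}. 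First I would substitute $\mathrm{d}Z_{\gamma^1}=\mathrm{d}Z_\beta=\mathrm{d}t$ and read off $W=\mathrm{d}\theta_{\beta^\vee}-\tau\,\mathrm{d}\theta_\beta$, reducing $g_N$ to the scalar expression \eqref{conHKmetric}.

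The substantive content is then assembling the instanton pieces $N_\beta^{\text{inst}}$ and $W^{\text{inst}}$ from their definitions. Using the explicit BPS spectrum $\Omega(\pm\beta+n\delta)=1$ (and recalling from Remark \ref{flavorcontributionconifold} that the charges $k\delta$ drop out because $\langle\widetilde\gamma_i,\delta\rangle=0$), I would compute $n_g(\gamma)$ for $\gamma=\pm\beta+n\delta$: since $\gamma = \pm\beta + n\delta$ projects to $\pm\beta$ in $\Gamma_g$, we have $n_g(\beta-n\delta)=+1$ and $n_g(-\beta+n\delta)=-1$. The key arithmetic step is pairing each charge $\gamma=\pm\beta+n\delta$ with the central charge: $Z_{\beta-n\delta}=Z_\beta-nZ_\delta=t-n$ (using $Z_\delta=1$), so $|Z_\gamma|=|t-n|$, and $\theta_{\beta-n\delta}=\theta_\beta-n\theta_\delta=\theta_\beta$ once we set $\theta_f=0$, i.e. $\theta_\delta=0$. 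Feeding these into the definitions of $V_\gamma^{\text{inst}}$ and $A_\gamma^{\text{inst}}$ and grouping the $\pm\beta$ contributions gives exactly the combined quantities $V_n^{\text{inst}}$ and $A_n^{\text{inst}}$ of \eqref{notationinst}; summing over $n\in\mathbb{Z}$ yields $N_\beta^{\text{inst}}=\sum_n V_n^{\text{inst}}$ and $W^{\text{inst}}=\sum_n(2\pi A_n^{\text{inst}}-iV_n^{\text{inst}}\mathrm{d}\theta_\beta)$.

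Finally I would insert the closed forms $N_\beta=\mathrm{Im}(\tau)=-\frac{1}{2\pi}\log\bigl|\tfrac{1-e^{2\pi i t}}{e^{2\pi i t}}\bigr|$ and $\tau=\frac{1}{2\pi i}(\log(1-e^{2\pi it})-\log(e^{2\pi it}))$ into \eqref{conHKmetric}, producing the displayed metric. The main subtlety, rather than difficulty, is the careful sign bookkeeping when combining the $\gamma=\beta-n\delta$ and $\gamma=-\beta+n\delta$ terms: the $V$-terms add symmetrically (since $V_\gamma^{\text{inst}}$ pairs $n$ against $-n$ under $\gamma\mapsto-\gamma$ with $\theta_{-\gamma}=-\theta_\gamma$, giving the real cosine combination seen in \eqref{Ttensorconifold}), whereas in $A_n^{\text{inst}}$ the opposite signs $n_g=\pm1$ combine with the odd factor $\mathrm{sign}(m)$ to give the antisymmetric $K_1$ sum. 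Everything else is the convergence statement already guaranteed by the strong convergence property and the Bessel asymptotics invoked in Theorem \ref{theorem1}, so no new analytic input is needed.
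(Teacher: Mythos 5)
Your proposal is correct and follows essentially the same route as the paper, which likewise obtains the corollary by specializing Theorem \ref{theorem1} to the rank-one frame $(\beta^{\vee},\beta,\delta)$, using $Z_{\beta-n\delta}=t-n$, $n_g(\pm\beta+n\delta)=\pm1$, and the groupings \eqref{notationinst} before inserting the closed forms of $\tau$ and $\mathrm{Im}(\tau)$. The only (harmless) over-restriction is your setting $\theta_{\delta}=0$: the paper keeps $\theta_{\beta-n\delta}$ general inside $V_n^{\text{inst}}$ and $A_n^{\text{inst}}$, and only needs that $\theta_{\delta}$ is constant on the fibers, so that $\mathrm{d}\theta_{\beta-n\delta}=\mathrm{d}\theta_{\beta}$.
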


We finish this subsection with a description of a certain Lagrangian submanifold $L$ of the holomorphic symplectic manifold $(N,I_3,\omega_1+i\omega_2)$. This Lagrangian submanifold will play a role in the conformal limit of Section \ref{confRH}.

\begin{prop} \label{Lagrangian}Fix $\theta_f=0$ (and hence $\theta_{\delta}=0$). Then the submanifold $L\subset N \subset \mathcal{N}(\theta_f=0)$ defined by setting $\theta_{\beta}=\theta_{\beta^{\vee}}=0$ is a complex Lagrangian submanifold of the holomorphic symplectic manifold $(N,I_3,\omega_1+i\omega_2)$. Furthermore, $L$ has a component $L_{+}$ biholomorphic to  $M_+$.

\end{prop}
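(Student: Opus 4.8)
The plan is to verify the two claims separately: first that $L = \{\theta_\beta = \theta_{\beta^\vee} = 0\}$ is complex Lagrangian for $(N, I_3, \omega_1 + i\omega_2)$, and then that its relevant component is biholomorphic to $M_+$. For the Lagrangian claim, the cleanest route is to use the explicit expression for the holomorphic symplectic form from Theorem \ref{theorem1}, namely $\omega_1 + i\omega_2 = \frac{1}{2\pi}\, \mathrm{d}Z_{\beta}\wedge (W + W^{\text{inst}})$, where in our rank-one setting $W = \mathrm{d}\theta_{\beta^\vee} - \tau\,\mathrm{d}\theta_\beta$ and $W^{\text{inst}} = \sum_{n\in\mathbb{Z}}(2\pi A_n^{\text{inst}} - i V_n^{\text{inst}}\,\mathrm{d}\theta_\beta)$. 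Since $L$ is cut out by setting the two fiber angles to zero, it is a graph over the base $M$ (more precisely over the locus $M_+$ where it sits inside $N$), so $\dim_{\mathbb{C}} L = 1$, which is half of $\dim_{\mathbb{C}} N = 2$; this gives the correct dimension count for a complex Lagrangian.

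The key computation is to show $(\omega_1 + i\omega_2)|_L = 0$. Here I would argue that on $L$ the pullbacks of $\mathrm{d}\theta_\beta$ and $\mathrm{d}\theta_{\beta^\vee}$ vanish, so that $W + W^{\text{inst}}$ pulls back to a multiple of $\mathrm{d}t$ and $\mathrm{d}\overline{t}$ only. Indeed, setting $\theta_\beta = 0$ kills every $\mathrm{d}\theta_\beta$ term, and the term $\mathrm{d}\theta_{\beta^\vee}$ in $W$ restricts to zero since $\theta_{\beta^\vee}$ is held constant; the remaining contribution from $A_n^{\text{inst}}$ is proportional to $(\frac{\mathrm{d}t}{t-n} - \frac{\mathrm{d}\overline{t}}{\overline{t}-n})$. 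The crucial point is that $Z_\beta = t$ is holomorphic, so $\mathrm{d}Z_\beta = \mathrm{d}t$, and hence $\mathrm{d}Z_\beta \wedge \mathrm{d}t = 0$ automatically, while $\mathrm{d}Z_\beta \wedge \mathrm{d}\overline{t}$ survives but its coefficient must be checked to vanish. Here I expect to need the reality structure: when $\theta_\beta = 0$, the sums defining $A_n^{\text{inst}}$ are (anti)symmetric in $m \leftrightarrow -m$, so the $\mathrm{d}\overline{t}$ coefficient of $W^{\text{inst}}|_L$ collapses. I would either show $(W + W^{\text{inst}})|_L$ is itself a $(1,0)$-form proportional to $\mathrm{d}t$ (forcing the wedge with $\mathrm{d}Z_\beta = \mathrm{d}t$ to vanish), or directly check the $\mathrm{d}t\wedge \mathrm{d}\overline{t}$ coefficient is zero. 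The former is cleaner: since $L$ is one-complex-dimensional and $\omega_1 + i\omega_2$ is a $(2,0)$-form for $I_3$, any pullback to a one-dimensional complex submanifold vanishes for degree reasons once $L$ is shown to be a complex (i.e. $I_3$-invariant) submanifold — so the real content is just that $L$ is $I_3$-holomorphic.

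The second claim, that a component $L_+$ of $L$ is biholomorphic to $M_+$, follows from the projection $\pi\colon \mathcal{N}(\theta_f) \to M$. Restricted to $L$, the map $\pi$ is injective with image inside $M$ because $L$ is a section (the zero section of the angle coordinates), and on $N$ the base must lie in the locus where $T$ is horizontally non-degenerate; by Proposition \ref{Ncar}, $N \supset N_0$ contains the zero-$\theta_\beta$ locus over $M_+$, so the component $L_+ := L \cap \pi^{-1}(M_+)$ maps biholomorphically onto $M_+$. The main obstacle I anticipate is the bookkeeping in the Lagrangian computation: one must confirm that $L$ is genuinely $I_3$-invariant (equivalently that the angle coordinates $\theta_\beta, \theta_{\beta^\vee}$ are compatible with the $I_3$ complex structure so that $L$ is a complex submanifold), rather than merely real Lagrangian. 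Establishing $I_3$-invariance of $L$ is the crux, since once that holds, the vanishing of the pullback of the $(2,0)$-form $\omega_1 + i\omega_2$ is forced by the dimension count.
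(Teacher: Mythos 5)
Your proposal is correct and follows essentially the same route as the paper: it uses the expression $\omega_1+i\omega_2=\frac{1}{2\pi}\,\mathrm{d}Z_{\beta}\wedge(W+W^{\text{inst}})$ from Theorem \ref{theorem1}, observes that $\mathrm{d}\theta_{\beta},\mathrm{d}\theta_{\beta^{\vee}}$ restrict to zero on $L$ and that $A_n^{\text{inst}}|_L=0$ by the $m\leftrightarrow -m$ antisymmetry at $\theta_{\beta}=0$ (so in fact $(W+W^{\text{inst}})|_L=0$ entirely, which also settles the $I_3$-invariance you flag as the crux, since $(\mathrm{d}Z_{\beta},W+W^{\text{inst}})$ is a frame of $(1,0)$-forms), and then identifies $L_+$ with $M_+$ via $\pi$ and Proposition \ref{Ncar}. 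This matches the paper's proof in all essentials.
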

\begin{proof}
We pick the local frame $(\beta^{\vee},\beta,\delta)$ of $\Gamma$. With respect to this frame, one has by Theorem \ref{theorem1} the following expression

\begin{equation}
    \omega_1+i\omega_2=\frac{1}{2\pi}\mathrm{d}Z_{\beta}\wedge (W+W^{\text{inst}})\,,
\end{equation}
and $(\mathrm{d}Z_{\beta}$, $W+W^{\text{inst}})$ gives a local frame of $(1,0)$ forms with respect to complex structure $I_3$ (see for example the proof \cite[Theorem 3.13]{QKBPS} for more details). Now notice that
\begin{equation}
A_{n}^{\text{inst}}|_L=-\frac{1}{4\pi}\Big(\sum_{m \in \mathbb{Z}-\{0\}}\text{sign}(m)|t-n|K_1(2\pi |m||t-n|)\Big)\Big( \frac{\mathrm{d}t}{t-n}-\frac{\mathrm{d}\overline{t}}{\overline{t}-n}\Big)=0 \,.
\end{equation}
This implies that
\begin{equation}\label{Wvanish}
    (W+W^{\text{inst}})|_L=\Big(\mathrm{d}\theta_{\beta^{\vee}}-\tau \mathrm{d}\theta_{\beta}+ \sum_{n \in \mathbb{Z}}\Big(2\pi A_{n}^{\text{inst}}-iV_{n}^{\text{inst}}\mathrm{d}\theta_{\beta}\Big)\Big)\Big|_L=0 \,.
\end{equation}

We then conclude that $L$ is a Lagrangian submanifold of the holomorphic symplectic manifold $(N,I_3,\omega_1+\omega_2)$.\\

For the second statement, it suffices to notice that $t=Z_{\beta}$ is a global holomorphic coordinate for $L$, and that  $L_{+}:=\pi^{-1}(M_{+})\cap L\subset N_0\subset N$ by Proposition \ref{Ncar}.
\end{proof}

\begin{cor} With the same hypotheses as Proposition \ref{Lagrangian}, if $N_{+}\subset N$ is a connected open subset of $N$ such that $L_{+}\subset N_{+}$, the $g_{N}$ is positive definite on $N_{+}$. 

\end{cor}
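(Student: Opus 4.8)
The plan is to reduce positive-definiteness of $g_N$ to the positivity of the single real scalar function
\begin{equation*}
    f := N_{\beta} + N_{\beta}^{\text{inst}} = \text{Im}(\tau) + \sum_{n\in\mathbb{Z}} V_n^{\text{inst}},
\end{equation*}
which is precisely the coefficient appearing in the tensor $T$ of \eqref{Ttensorconifold}. From Corollary \ref{HKexplicit} the metric takes the form $g_N = f\,|\mathrm{d}t|^2 + \tfrac{1}{4\pi^2 f}\,|W + W^{\text{inst}}|^2$. Since $(\mathrm{d}Z_{\beta} = \mathrm{d}t,\, W + W^{\text{inst}})$ is a local frame of $(1,0)$-forms for $I_3$ throughout $N$ (as recalled in the proof of Proposition \ref{Lagrangian}; note that $\mathrm{d}t$ is horizontal while $W + W^{\text{inst}}$ carries the fiber direction $\mathrm{d}\theta_{\beta^{\vee}}$, so the two are everywhere independent), the Hermitian matrix of $g_N$ in this frame is diagonal, equal to $\mathrm{diag}(f,\, \tfrac{1}{4\pi^2 f})$. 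Hence $g_N$ is positive definite at a point if and only if $f > 0$ there, and the whole corollary reduces to showing $f > 0$ on all of $N_+$.

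To establish the sign, I would first observe that $f$ is continuous and nowhere zero on $N$: by construction $N$ is the locus where $T = f\,|\mathrm{d}t|^2$ is horizontally non-degenerate, which forces $f \neq 0$. Therefore $f$ has a constant sign on each connected component of $N$, and in particular on the connected set $N_+$. It then suffices to exhibit one point of $N_+$ at which $f > 0$. For this I would use the hypothesis $L_+ \subset N_+$ together with the vanishing $\theta_{\beta} = \theta_{\beta^{\vee}} = 0$ on $L$: with $\theta_f = 0$ one has $\theta_{\beta - n\delta} = \theta_{\beta}$, so on $L_+$ every factor $e^{im\theta_{\beta}}$ reduces to $1$ and
\begin{equation*}
    f|_{L_+} = \text{Im}(\tau) + \frac{1}{\pi}\sum_{n\in\mathbb{Z}}\sum_{m>0} K_0(2\pi m\,|t-n|).
\end{equation*}
Since $L_+$ lies over $M_+$, where $\text{Im}(\tau) > 0$, and since $K_0(x) > 0$ for $x > 0$, every summand is positive, so $f|_{L_+} > 0$. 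By the constant-sign argument $f > 0$ on all of $N_+$, and the equivalence from the first paragraph yields positive-definiteness of $g_N$ on $N_+$.

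I do not anticipate a serious obstacle here: the content is essentially that the single horizontal non-degeneracy function $f$ controls the entire signature, combined with a connectedness argument anchored at the ``untwisted'' slice $L_+$, where all instanton contributions add up with a definite sign. The only point deserving care is the claim that $(\mathrm{d}t,\, W + W^{\text{inst}})$ remains a genuine frame (so that the diagonal reading of the Hermitian form is valid) on the potentially large set $N_+$ rather than merely near $L_+$; this is guaranteed because $N_+ \subset N$, where the HK structure---and hence this frame---is defined, and because the fiber component $\mathrm{d}\theta_{\beta^{\vee}}$ of $W + W^{\text{inst}}$ is independent of the purely horizontal form $\mathrm{d}t$ by construction.
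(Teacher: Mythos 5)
Your proposal is correct and follows essentially the same route as the paper: reduce positive-definiteness to the sign of the scalar $N_{\beta}+N_{\beta}^{\text{inst}}$ via the diagonal form of $g_N$ in the frame built from $\mathrm{d}t$ and $W+W^{\text{inst}}$, then evaluate on $L_{+}$ where $\mathrm{Im}(\tau)>0$ and the Bessel-function sums are manifestly positive. The only difference is that you make explicit the connectedness/constant-sign step that the paper leaves implicit, which is a welcome clarification rather than a departure.
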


\begin{proof}

Writing (\ref{conHKmetric}) in the local real frame given by $\text{Re}(dt)$, $\text{Im}(dt)$, $\text{Re}(W+W^{\text{inst}})$, $\text{Im}(W+W^{\text{inst}})$, one sees that the signature of $g_{N}$ is determined the sign of $N_{\beta}+N^{\text{inst}}_{\beta}$. Indeed, if $N_{\beta}+N^{\text{inst}}_{\beta}>0$ (resp. $N_{\beta}+N^{\text{inst}}_{\beta}<0$) we find that $g_{N}$ is positive definite (resp. negative definite).  \\

Now notice that

\begin{equation}
    V_{n}^{\text{inst}}|_L=\frac{1}{2\pi}\sum_{m \in \mathbb{Z}-\{0\}}K_0(2\pi |m||t-n|)>0\,,
\end{equation}
since $K_0(x)>0$ for $x>0$. In particular, since $\text{Im}(\tau)>0$ on $M_{+}$, one finds that 

\begin{equation}
    (N_{\beta}+N^{\text{inst}}_{\beta})|_{L_{+}}=\Big(\text{Im}(\tau)+\sum_{n \in \mathbb{Z}}V_{n}^{\text{inst}}\Big)\Big|_{L_{+}}>0\,,
\end{equation}
where the pullback of $\text{Im}(\tau)$ to $N$ is suppressed. Hence, $g_{N}$ is positive definite on $N_{+}\supset L_{+}$.

\end{proof}

\subsection{Ooguri-Vafa-like smoothing}\label{ovsection}

In this section, we study the instanton corrected HK manifold $(N,g_{N},\omega_1,\omega_2,\omega_3)$ associated to the resolved conifold near the locus $t=0$. We show that the HK structure $(N,g_{N},\omega_1,\omega_2,\omega_3)$ admits a smooth extension over the locus $t=0$, improving on the semi-flat HK structure built out of the ASK geometry associated to the resolved conifold.  This result is an instance of a more general conjecture stated in \cite[Section 7]{NewCHK}, where certain instanton corrected HK manifolds are expected to admit smooth extensions over certain singular loci where the semi-flat HK metric is not defined. \\

In order to show this, we will need to first describe the Ooguri-Vafa metric.  This metric can be described via the following HK variation of BPS structures $(M^{\text{ov}},\Gamma^{\text{ov}},Z^{\text{ov}},\Omega^{\text{ov}})$ (see \cite{NewCHK}, \cite{LCS} for another treatment, and \cite{OV} for the origin of this metric):

\begin{itemize}
    \item We let $M^{\text{ov}}\subset \mathbb{C}^{\times}$ be a small neighborhood of $t=0$ (without $0$). We assume for simplicity that $M^{\text{ov}}\subset M_{+}$, where $M_{+}$ is the component where the ASK geometry associated to the resolved conifold is positive definite (see Section \ref{ASKgeometry}). 
    \item We take $\Gamma^{\text{ov}}:=\Gamma_g|_{M^{\text{ov}}}$, where $\Gamma_g$ was defined for the central charge ASK geometry associated to the resolved conifold in Section \ref{ASKgeometry}. In particular, we have $\Gamma^{\text{ov}}_f=0$.
    \item With respect to the local Darboux frame $(\beta^{\vee},\beta)$ of $\Gamma_g$, we first define $Z^{\text{ov}}$ as follows: 
    \begin{equation}
        \begin{split}
        Z_{\beta}^{\text{ov}}&:=t\,,\\
        Z_{\beta^{\vee}}^{\text{ov}}&:=\frac{1}{2\pi i}(t\log(t/\Lambda)-t)\,,
        \end{split}
    \end{equation}
    for some choice of $\Lambda \in \mathbb{C}^{\times}$. Then as we go around $t=0$ we have 
    \begin{equation}
    Z_{\beta^{\vee}}^{\text{ov}}\to Z_{\beta^{\vee}}^{\text{ov}}+Z_{\beta}^{\text{ov}}\,.
    \end{equation}
    Since this matches the jump $\beta^{\vee} \to \beta^{\vee}+\beta$ in $\Gamma^{\text{ov}}$, $Z^{\text{ov}}$ defines a holomorphic  section of $(\Gamma^{\text{ov}})^{*}\otimes \mathbb{C}\to M^{\text{ov}}$.
    \item Finally, we set $\Omega^{\text{ov}}(\pm \beta)=1$ and $0$ otherwise. 
\end{itemize}
It is easy to check that $(M^{\text{ov}},\Gamma^{\text{ov}},Z^{\text{ov}})$ is a central charge ASK geometry for $M^{\text{ov}}$ sufficiently small, and that $(M^{\text{ov}},\Gamma^{\text{ov}},Z^{\text{ov}},\Omega^{\text{ov}})$ is an HK variation of mutually local BPS structures. 

\begin{dfn}
The HK manifold $(N^{\text{ov}},g^{\text{ov}},\omega_1^{\text{ov}},\omega_2^{\text{ov}},\omega_3^{\text{ov}})$ associated to the mutually local HK variation of BPS structures $(M^{\text{ov}},\Gamma^{\text{ov}},Z^{\text{ov}},\Omega^{\text{ov}})$ via Theorem \ref{theorem1} is called the Ooguri-Vafa space. The definition of the Ooguri-Vafa space depends on the choice of $\Lambda \in \mathbb{C}^{\times}$, which we will call the cut-off parameter.
\end{dfn}

If we denote by $\mathcal{N}^{\text{ov}}$ the total space of the corresponding torus bundle $\mathcal{N}^{\text{ov}}\to M^{\text{ov}}$, then the Ooguri-Vafa HK structure turns out to the defined on the whole of $\mathcal{N}^{\text{ov}}$ (see \cite{GMN1,LCS}), so that $N^{\text{ov}}= \mathcal{N}^{\text{ov}}\cong \mathcal{N}(\theta_f=0)|_{M^{\text{ov}}}$. Furthermore, $g^{\text{ov}}$ turns out to be positive definite, and the hyperk\"{a}hler structure of the Ooguri-Vafa space extends smoothly over the locus $t=0$ (see \cite[Proposition 3.2]{LCS}). In terms of the torus bundle $\mathcal{N}^{\text{ov}} \to M^{\text{ov}}$, one adds a singular fiber of Kodaira type $I_1$ (a pinched torus) over $t=0$. \\ 

For reference, it will be useful to collect some formulas for the K\"{a}hler forms $\omega_{\alpha}^{\text{ov}}$.

\begin{prop}
The forms $\varpi^{\text{ov}}:=\omega_1^{\text{ov}}+i\omega_2^{\text{ov}}$ and $\omega_3^{\text{ov}}$ have the following expressions with respect to the coordinates $(t,\theta_{\beta},\theta_{\beta^{\vee}})$:

\begin{equation}\label{ovforms}
    \begin{split}
        \varpi^{\text{ov}}&=\frac{1}{2\pi}\mathrm{d}t \wedge Y^{\text{ov}}\,,\\
        \omega_3^{\text{ov}}&=\frac{1}{4}\langle \mathrm{d}Z^{\text{ov}}\wedge \mathrm{d}\overline{Z}^{\text{ov}}\rangle-\frac{1}{8\pi^2} \langle \mathrm{d}\theta\wedge \mathrm{d}\theta \rangle +\frac{i}{2}V^{\text{inst}}_{0}\mathrm{d}t\wedge \mathrm{d}\overline{t}+\frac{1}{2\pi } \mathrm{d}\theta_{\beta}\wedge A_{0}^{\text{inst}}\,,\\
    \end{split}
\end{equation}
where 
\begin{equation}\label{ovY}
    Y^{\text{ov}}=\mathrm{d}\theta_{\beta^{\vee}}-\tau^{\text{ov}}\mathrm{d}\theta_{\beta}+2\pi A_{0}^{\text{inst}}-iV_{0}^{\text{inst}}\mathrm{d}\theta_{\beta}, \;\;\;\; \tau^{\text{ov}}:=\frac{\mathrm{d}Z_{\beta^{\vee}}^{\text{ov}}}{\mathrm{d}Z_{\beta}^{\text{ov}}}=\frac{1}{2\pi i}\log\Big(\frac{t}{\Lambda}\Big)\,.
\end{equation}
and $A_0^{\text{inst}}$, $V_0^{\text{inst}}$ are as in (\ref{notationinst}) for $n=0$.
\end{prop}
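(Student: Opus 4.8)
The plan is to specialize the general formulas of Theorem \ref{theorem1} (equivalently (\ref{holsymp}) and (\ref{kf})) to the Ooguri--Vafa data $(M^{\text{ov}},\Gamma^{\text{ov}},Z^{\text{ov}},\Omega^{\text{ov}})$ and to carry out the finite sum over $\text{Supp}(\Omega^{\text{ov}})=\{\beta,-\beta\}$ explicitly. Since $\Gamma^{\text{ov}}_f=0$, the local Darboux frame is simply $(\widetilde{\gamma}_1,\gamma^1)=(\beta^{\vee},\beta)$ with $\langle\beta^{\vee},\beta\rangle=1$, and the only nonzero coefficients in Definition \ref{impexp} are $n_{1,g}(\beta)=1$ and $n_{1,g}(-\beta)=-1$. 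The single special coordinate is $Z^{\text{ov}}_{\beta}=t$, so $\mathrm{d}Z_{\gamma^1}=\mathrm{d}t$, and $\tau_{11}=\mathrm{d}Z^{\text{ov}}_{\beta^{\vee}}/\mathrm{d}Z^{\text{ov}}_{\beta}=\tau^{\text{ov}}$.

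First I would compute $\varpi^{\text{ov}}$ from (\ref{holsymw}): with only one index, $\omega_1+i\omega_2=\frac{1}{2\pi}\mathrm{d}t\wedge(W_1+W_1^{\text{inst}})$, so it suffices to show $W_1+W_1^{\text{inst}}=Y^{\text{ov}}$. Here $W_1=\mathrm{d}\theta_{\beta^{\vee}}-\tau^{\text{ov}}\mathrm{d}\theta_{\beta}$ by Definition \ref{impexp}, and
\[
W_1^{\text{inst}}=\Omega(\beta)(2\pi A_{\beta}^{\text{inst}}-iV_{\beta}^{\text{inst}}\mathrm{d}\theta_{\beta})-\Omega(-\beta)(2\pi A_{-\beta}^{\text{inst}}-iV_{-\beta}^{\text{inst}}\mathrm{d}\theta_{-\beta}).
\]
Using $\theta_{-\beta}=-\theta_{\beta}$ (so $\mathrm{d}\theta_{-\beta}=-\mathrm{d}\theta_{\beta}$) and $\Omega(\pm\beta)=1$, the $A$-contributions combine with a relative minus sign while the $V$-contributions add, giving $W_1^{\text{inst}}=2\pi(A_{\beta}^{\text{inst}}-A_{-\beta}^{\text{inst}})-i(V_{\beta}^{\text{inst}}+V_{-\beta}^{\text{inst}})\mathrm{d}\theta_{\beta}$. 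Recognizing these as the $n=0$ quantities $A_0^{\text{inst}}$ and $V_0^{\text{inst}}$ of (\ref{notationinst}) yields $W_1+W_1^{\text{inst}}=Y^{\text{ov}}$, hence $\varpi^{\text{ov}}=\frac{1}{2\pi}\mathrm{d}t\wedge Y^{\text{ov}}$.

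Next I would compute $\omega_3$ directly from (\ref{kf}). The non-instanton terms $\frac{1}{4}\langle\mathrm{d}Z^{\text{ov}}\wedge\mathrm{d}\overline{Z}^{\text{ov}}\rangle-\frac{1}{8\pi^2}\langle\mathrm{d}\theta\wedge\mathrm{d}\theta\rangle$ are kept verbatim, so the work is to collapse $\sum_{\gamma\in\{\pm\beta\}}\big(\frac{i\Omega(\gamma)}{2}V_{\gamma}^{\text{inst}}\mathrm{d}Z_{\gamma}\wedge\mathrm{d}\overline{Z}_{\gamma}+\frac{\Omega(\gamma)}{2\pi}\mathrm{d}\theta_{\gamma}\wedge A_{\gamma}^{\text{inst}}\big)$. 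For the first summand, $\mathrm{d}Z_{-\beta}\wedge\mathrm{d}\overline{Z}_{-\beta}=(-\mathrm{d}t)\wedge(-\mathrm{d}\overline{t})=\mathrm{d}t\wedge\mathrm{d}\overline{t}$, so both charges contribute with the same sign and the coefficient assembles to $\frac{i}{2}(V_{\beta}^{\text{inst}}+V_{-\beta}^{\text{inst}})=\frac{i}{2}V_0^{\text{inst}}$. For the second summand, $\mathrm{d}\theta_{-\beta}=-\mathrm{d}\theta_{\beta}$ produces a relative minus sign so that $\mathrm{d}\theta_{\beta}\wedge(A_{\beta}^{\text{inst}}-A_{-\beta}^{\text{inst}})=\mathrm{d}\theta_{\beta}\wedge A_0^{\text{inst}}$, giving the claimed $\frac{1}{2\pi}\mathrm{d}\theta_{\beta}\wedge A_0^{\text{inst}}$.

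The computation is entirely mechanical; the only point requiring care is the bookkeeping of the $\gamma\mapsto-\gamma$ reflection and the compatibility of the $n=0$ notation of (\ref{notationinst}) with the single-charge quantities $A_{\pm\beta}^{\text{inst}}$, $V_{\pm\beta}^{\text{inst}}$. In particular one must verify that $A_{-\beta}^{\text{inst}}$, obtained by substituting $Z_{-\beta}=-t$, $\theta_{-\beta}=-\theta_{\beta}$, $|Z_{-\beta}|=|t|$ into the definition of $A_{\gamma}^{\text{inst}}$, matches the reflected term so that $A_{\beta}^{\text{inst}}-A_{-\beta}^{\text{inst}}$ reproduces $A_0^{\text{inst}}$ (and similarly $V_{\beta}^{\text{inst}}+V_{-\beta}^{\text{inst}}=V_0^{\text{inst}}=\frac{1}{\pi}\sum_{n>0}\cos(n\theta_{\beta})K_0(2\pi n|t|)$). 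No convergence issue arises since the sum over charges is finite, and smoothness and non-degeneracy are inherited from Theorem \ref{theorem1}.
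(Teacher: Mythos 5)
Your proposal is correct and follows essentially the same route as the paper, which simply states that the formulas follow by specializing (\ref{holsymw}) and (\ref{kf}) to the Ooguri--Vafa BPS data and invoking the notation (\ref{notationinst}); you have merely written out the sign bookkeeping under $\gamma\mapsto-\gamma$ that the paper leaves implicit, and your identifications $V_{\beta}^{\text{inst}}+V_{-\beta}^{\text{inst}}=V_0^{\text{inst}}$ and $A_{\beta}^{\text{inst}}-A_{-\beta}^{\text{inst}}=A_0^{\text{inst}}$ match the $n=0$ case of (\ref{notationinst}) exactly.
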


\begin{proof}
The formulas for $\varpi^{\text{ov}}$ and $\omega_3^{\text{ov}}$ follow immediately from equations (\ref{holsymw}) and (\ref{kf}), after specializing to the variation of BPS structures of the Ooguri-Vafa space, together with the notation (\ref{notationinst}).
\end{proof}

\begin{thm} \label{ovsmoothing} Consider the HK manifold $(N,g_{N},\omega_1,\omega_2,\omega_3)$ associated to the resolved conifold, and the Ooguri-Vafa HK manifold $(N^{\text{ov}},g^{\text{ov}},\omega_1^{\text{ov}},\omega_2^{\text{ov}},\omega_3^{\text{ov}})$ with cutoff $\Lambda=\frac{i}{2\pi}$.\\

We then have

\begin{equation}
    \omega_{\alpha}=\omega_{\alpha}^{\text{ov}}+\eta_{\alpha} \;\;\;\; \text{for} \;\;\;\; \alpha=1,2,3,
\end{equation}
where $\eta_{\alpha}$ are two forms extending over the locus $t=0$, satisfying 

\begin{equation}\label{exteta}
\begin{split}
    \eta_1|_{t=0}&=0\,,\\
    \eta_2|_{t=0}&=-\frac{1}{2\pi}\Big(\sum_{n \in \mathbb{Z}, n\neq 0}V_n^{\text{inst}}|_{t=0}\Big)\mathrm{d}t \wedge \mathrm{d}\theta_{\beta}\,,\\
    \eta_3|_{t=0}&=\frac{i}{2}\Big(\sum_{n \in \mathbb{Z}, n\neq 0}V_n^{\text{inst}}|_{t=0}\Big)\mathrm{d}t\wedge \mathrm{d}\overline{t}\,.\\
\end{split}   
\end{equation}
In particular, the HK structure of $(N,g_{N},\omega_1,\omega_2,\omega_3)$ extends smoothly over the points in the locus $t=0$ where the forms $\omega_{\alpha}$ remain non-degenerate.

\end{thm}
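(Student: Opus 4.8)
The plan is to compare $\omega_\alpha$ and $\omega_\alpha^{\text{ov}}$ directly, exploiting that both arise from the single template of Theorem \ref{theorem1} and differ only in their input. Over the small punctured disc $M^{\text{ov}}\subset M_+$ around $t=0$, the total spaces $N|_{M^{\text{ov}}}$ and $N^{\text{ov}}$ are the same torus bundle $\mathcal{N}(\theta_f=0)|_{M^{\text{ov}}}$ (both carry the frame $(\beta^{\vee},\beta)$ with the identical monodromy $\beta^{\vee}\mapsto\beta^{\vee}+\beta$ about $t=0$), so $\eta_\alpha:=\omega_\alpha-\omega_\alpha^{\text{ov}}$ is well defined. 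First I would subtract the explicit expressions (\ref{holsymp}), (\ref{kf}) for the conifold from (\ref{ovforms}) for Ooguri--Vafa. Because $Z_\delta$ is constant and $\mathrm{d}Z_{\pm\beta+n\delta}=\pm\,\mathrm{d}t$, the full $n=0$ instanton data ($V_0^{\text{inst}}$, $A_0^{\text{inst}}$) and the topological term $\langle\mathrm{d}\theta\wedge\mathrm{d}\theta\rangle$ are literally identical on the two sides and drop out. What survives in each $\eta_\alpha$ is of exactly two types: the semi-flat discrepancy carried by $\tau-\tau^{\text{ov}}$, and the instanton terms $V_n^{\text{inst}}$, $A_n^{\text{inst}}$ with $n\neq0$.

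Next I would show both surviving types extend smoothly across the fiber $t=0$. For the $n\neq0$ instanton terms this is immediate: on a neighborhood of $0$ one has $|t-n|\geq|n|-|t|>0$, so each $K_\nu(2\pi|m|\,|t-n|)$ is a smooth function of $t$ there, and the $m,n$ sums (together with all $t$-derivatives) converge locally uniformly by the Bessel asymptotics $K_\nu(x)\sim\sqrt{2\pi/x}\,e^{-x}$ and the strong convergence property of Definition \ref{HKBPS}. For the semi-flat piece the decisive input is the cutoff value $\Lambda=\tfrac{i}{2\pi}$. Expanding $1-e^{2\pi it}=-2\pi it\,(1+\pi it+\cdots)$ gives $\log(1-e^{2\pi it})=\log(-2\pi it)+\pi it+O(t^2)$, whereas $\tau^{\text{ov}}=\tfrac{1}{2\pi i}\log(t/\Lambda)=\tfrac{1}{2\pi i}\log(-2\pi it)$; thus the two logarithmic singularities cancel exactly and $\tau-\tau^{\text{ov}}=-\tfrac{t}{2}+O(t^2)$ is holomorphic at $t=0$ with $(\tau-\tau^{\text{ov}})(0)=0$. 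This cancellation is the step I expect to be the crux of the argument, since it is precisely where the specific value of $\Lambda$ is forced and where the logarithmic blow-up of the semi-flat metric is removed.

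With smoothness in hand, I would read off the boundary values (\ref{exteta}) by evaluating at $t=0$. Two facts do the work: $(\tau-\tau^{\text{ov}})(0)=0$ kills the semi-flat contribution, and the instanton one-forms cancel pairwise, $\sum_{n\neq0}A_n^{\text{inst}}|_{t=0}=0$. The latter holds because $A_n^{\text{inst}}|_{t=0}$ is proportional to $\tfrac{1}{n}(\mathrm{d}t-\mathrm{d}\overline{t})$ with a scalar coefficient that is even in $n$, so the odd factor $1/n$ forces the $n\leftrightarrow-n$ terms to cancel. After substituting into (\ref{holsymp}) and (\ref{kf}) and separating the holomorphic symplectic part from $\omega_3$, only the $\sum_{n\neq0}V_n^{\text{inst}}|_{t=0}$ terms remain, yielding the stated $\eta_1|_{t=0}$, $\eta_2|_{t=0}$ and $\eta_3|_{t=0}$.

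Finally I would conclude the smoothing. By \cite[Proposition 3.2]{LCS} the Ooguri--Vafa triple $\omega_\alpha^{\text{ov}}$ already extends smoothly over the $t=0$ fiber (where a Kodaira $I_1$ fiber is glued in), so $\omega_\alpha=\omega_\alpha^{\text{ov}}+\eta_\alpha$ extends smoothly as well. The three extended two-forms are closed by continuity, and at any point of $t=0$ where they remain non-degenerate the algebraic relations defining a hyperk\"ahler triple --- valid on the dense open set $t\neq0$ by Theorem \ref{theorem1} --- persist by continuity; since $g_N$ and the complex structures $I_\alpha$ are algebraic functions of a non-degenerate compatible triple, they extend smoothly too, giving the asserted extension of the HK structure over $t=0$.
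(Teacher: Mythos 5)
Your proposal is correct and follows essentially the same route as the paper's proof: subtract the Ooguri--Vafa forms term by term, observe that the $n=0$ instanton data and topological terms cancel identically, use the expansion $\tau=\tau^{\text{ov}}+\mathcal{O}(t)$ forced by $\Lambda=\tfrac{i}{2\pi}$, and evaluate the remainder at $t=0$ using the pairwise cancellation $\sum_{n\neq 0}A_n^{\text{inst}}|_{t=0}=0$. The only differences are that you spell out two details the paper leaves implicit (the locally uniform smoothness of the $n\neq 0$ Bessel sums near $t=0$, and the explicit computation $\tau-\tau^{\text{ov}}=-\tfrac{t}{2}+\mathcal{O}(t^2)$), which are welcome but do not change the argument.
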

\begin{rem}
 Clearly $dt$ extends over $t=0$. On the other hand, since $\beta$ is a global section of $\Gamma$, $d\theta_{\beta}$ also extends over $t=0$. Hence, the expressions on (\ref{exteta}) make sense. 
\end{rem}
\begin{proof}
In the following, we make frequent use of the following identity near $t=0$
\begin{equation}\label{tau-tauov}
    \tau=\frac{1}{2\pi i}\log\Big(\frac{t}{\Lambda}\Big)+\mathcal{O}(t)=\tau^{\text{ov}}+\mathcal{O}(t), \;\;\;\; \Lambda=\frac{i}{2\pi}\,,
\end{equation}
where in the second equality we just used the definition of $\tau^{\text{ov}}$. \\

Now let $\varpi=\omega_1+i\omega_2$ and $\varpi^{\text{ov}}=\omega_1^{\text{ov}}+i\omega_2^{\text{ov}}$. Using (\ref{tau-tauov}), (\ref{ovforms}) and (\ref{ovY}) one finds

\begin{equation}
    \begin{split}
    \varpi&=\frac{1}{2\pi}\mathrm{d}t\wedge (W_i+W_i^{\text{inst}})\\
    &=\frac{1}{2\pi}\mathrm{d}t\wedge \Big(\mathrm{d}\theta_{\beta^{\vee}}-\tau \mathrm{d}\theta_{\beta} +\sum_{n \in \mathbb{Z}}\Big(2\pi A_n^{\text{inst}} -iV_{n}^{\text{inst}}\mathrm{d}\theta_{\beta}\Big)\Big)\\
    &=\frac{1}{2\pi}\mathrm{d}t\wedge \Big(Y^{\text{ov}} +\mathcal{O}(t)\mathrm{d}\theta_{\beta} + \sum_{n \in \mathbb{Z}, n\neq 0} 2\pi A_n^{\text{inst}} -i\sum_{n\in \mathbb{Z},n\neq 0}V_n^{\text{inst}}\mathrm{d}\theta_{\beta}\Big)\\
    &=\varpi^{\text{ov}}+\frac{1}{2\pi}\mathrm{d}t\wedge \Big(\mathcal{O}(t)\mathrm{d}\theta_{\beta} + \sum_{n \in \mathbb{Z}, n\neq 0} 2\pi A_n^{\text{inst}} -i\sum_{n\in \mathbb{Z},n\neq 0}V_n^{\text{inst}}\mathrm{d}\theta_{\beta}\Big)\,.
    \end{split}
\end{equation}
Now notice that
\begin{equation}
    \sum_{n \in \mathbb{Z}, n\neq 0} 2\pi A_n^{\text{inst}}|_{t=0}=\frac{1}{2}\sum_{n \in \mathbb{Z}, n\neq 0}\text{sign}(n)\Big(\sum_{m \in \mathbb{Z}-\{0\}}\text{sign}(m)e^{im\theta_{\beta-n\delta}}K_1(2\pi |m||n|)\Big)\Big( \mathrm{d}t-\mathrm{d}\overline{t}\Big)=0\,,\\
\end{equation}
since $A_{n}^{\text{inst}}|_{t=0}$ gets canceled by $A_{-n}^{\text{inst}}|_{t=0}$. Furthermore, since $\mathrm{d}t$ and $\mathrm{d}\theta_{\beta}$ extend over $t=0$ (since $\beta$ is a global non-vanishing section of $\Gamma$), we find that the tensor
\begin{equation}
    \eta:=\frac{1}{2\pi}\mathrm{d}t\wedge \Big(\mathcal{O}(t)\mathrm{d}\theta_{\beta} + \sum_{n \in \mathbb{Z}, n\neq 0} 2\pi A_n^{\text{inst}} -i\sum_{n\in \mathbb{Z},n\neq 0}V_n^{\text{inst}}\mathrm{d}\theta_{\beta}\Big)
\end{equation}
extends over $t=0$ and satisfies that

\begin{equation}
    \eta|_{t=0}=-\frac{i}{2\pi}\Big(\sum_{n\in \mathbb{Z},n\neq 0}V_n^{\text{inst}}|_{t=0}\Big)\mathrm{d}t\wedge \mathrm{d}\theta_{\beta}\,.
\end{equation} Taking $\eta_1:=\text{Re}(\eta)$ and $\eta_2:=\text{Im}(\eta)$, and using that $\sum_{n\in \mathbb{Z},n\neq 0}V_n^{\text{inst}}$ is real valued, we obtain the corresponding expressions (\ref{exteta}) for $\alpha=1,2$.\\

For $\omega_3$, we use (\ref{kf}) specialized to the case of the resolved conifold:

\begin{equation}\label{kfcon}
    \omega_3=\frac{1}{4}\langle \mathrm{d}Z\wedge \mathrm{d}\overline{Z}\rangle-\frac{1}{8\pi^2} \langle \mathrm{d}\theta\wedge \mathrm{d}\theta \rangle+\sum_{n \in \mathbb{Z}}\Big(\frac{i}{2}V^{\text{inst}}_{n}\mathrm{d}t\wedge \mathrm{d}\overline{t}+\frac{1}{2\pi } \mathrm{d}\theta_{\beta}\wedge A_{n}^{\text{inst}}\Big)\,.
\end{equation}

First notice that
\begin{equation}
    \begin{split}
    \frac{1}{4}\langle \mathrm{d}Z\wedge \mathrm{d}\overline{Z} \rangle &= \frac{i}{2}\text{Im}(\tau)\mathrm{d}t\wedge \mathrm{d}\overline{t}\\
    &=\frac{i}{2}\Big(\text{Im}(\tau^{\text{ov}})+\mathcal{O}(t)\Big)\mathrm{d}t\wedge \mathrm{d}\overline{t}\\
    &=\frac{1}{4}\langle \mathrm{d}Z^{\text{ov}}\wedge \mathrm{d}\overline{Z}^{\text{ov}} \rangle + \mathcal{O}(t)\mathrm{d}t\wedge \mathrm{d}\overline{t}\,,
    \end{split}
\end{equation}
so we can rewrite (\ref{kfcon}) as follows:
\begin{equation}
    \begin{split}
    \omega_3=\omega_3^{\text{ov}}+\mathcal{O}(t)\mathrm{d}t\wedge \mathrm{d}\overline{t} + \frac{i}{2}\sum_{n \in \mathbb{Z},n\neq 0}V^{\text{inst}}_{n}\mathrm{d}t\wedge \mathrm{d}\overline{t}+\frac{1}{2\pi } \mathrm{d}\theta_{\beta}\wedge \sum_{n \in \mathbb{Z},n\neq 0}A_{n}^{\text{inst}}\,.
    \end{split}
\end{equation}
Letting
\begin{equation}
    \eta_3:=\mathcal{O}(t)\mathrm{d}t\wedge \mathrm{d}\overline{t} + \frac{i}{2}\sum_{n \in \mathbb{Z},n\neq 0}V^{\text{inst}}_{n}\mathrm{d}t\wedge \mathrm{d}\overline{t}+\frac{1}{2\pi } \mathrm{d}\theta_{\beta}\wedge \sum_{n \in \mathbb{Z},n\neq 0}A_{n}^{\text{inst}}\,,
\end{equation}
it follows that $\eta_3$ extends over $t=0$ and

\begin{equation}
    \eta_3|_{t=0}=\frac{i}{2}\sum_{n \in \mathbb{Z},n\neq 0}V^{\text{inst}}_{n}|_{t=0}\mathrm{d}t\wedge \mathrm{d}\overline{t}\,.
\end{equation}

Since the triple $\omega^{\text{ov}}_{\alpha}$ extends smoothly over $t=0$ (see \cite[Proposition 3.2]{LCS}), we then conclude that the same holds for $\omega_{\alpha}$. Since the triple of K\"{a}hler forms $\omega_{\alpha}$ of a hyperk\"{a}hler structure determines the complex structures $I_{\alpha}$ and metric $g_N$, we conclude that the full HK structure $(N,g_{N},\omega_1,\omega_2,\omega_3)$ extends smoothly over the points in the locus $t=0$ where the forms $\omega_{\alpha}$ are non-degenerate. 
\end{proof}

\section{Conformal limit of the twistor coordinates and the quantum dilogarithm}\label{confRH}
In this section we study a certain conformal limit  of the twistor coordinates $\mathcal{X}_{\beta^{\vee}}$ and $\mathcal{X}_{\beta}$. We show that the conformal limit of $\mathcal{X}_{\beta^{\vee}}$ satisfies properties related to a solution of a Riemann-Hilbert problem considered in \cite{BridgelandCon}.  This will allow us to conjecture that $\mathcal{X}_{\beta^{\vee}}$ matches the corresponding unique solution to the Riemann-Hilbert problem.

\begin{rem}\label{rescalerem}
In order to be able to easily compare with the results of \cite{BridgelandCon}, it will be convenient to scale our initial choice of central charge for the resolved conifold by

\begin{equation}
    Z\to 2i Z=Z'\,,
\end{equation} 
and consider the modified HK variation of BPS structures $(M,Z',\Gamma,\Omega)$ associated to the resolved conifold. This modification will remove several factors of $2i$ from the final results. The corresponding twistor coordinates associated to $(M,Z',\Gamma,\Omega)$ are then given (in terms of $Z$) by

\begin{equation}
    \begin{split}
    \mathcal{X}_{\gamma}(x,\zeta)&=\exp[2\pi i \zeta^{-1}Z_{\gamma}+i\theta_{\gamma}-2\pi i  \zeta \overline{Z}_{\gamma}]\\
        &\;\;\;\;\; \cdot\exp \Big[-\frac{1}{4\pi i}\sum_{\gamma'\in \Gamma_{\pi(\theta)}}\Omega(\gamma')\langle \gamma,\gamma' \rangle\int_{\mathbb{R}_{-}2iZ_{\gamma'}}\frac{d\zeta'}{\zeta'}\frac{\zeta'+\zeta}{\zeta'-\zeta}\log(1-\mathcal{X}_{\gamma'}(x,\zeta'))\Big]\,.\\
    \end{split}
\end{equation}
\end{rem}

In the case of the resolved conifold, the instanton corrected HK geometry considered in Section \ref{instconifold} is described by the twistor coordinates $\mathcal{X}_{\beta}^{\text{sf}}(\zeta)$ and $\mathcal{X}_{\beta^{\vee}}(\zeta)$. This is the case since the $\mathcal{O}(2)$-family of holomorphic symplectic forms is given by

\begin{equation}
    \zeta\varpi(\zeta)\otimes \partial_{\zeta}=\frac{\zeta}{4\pi^2}\mathrm{d}\log( \mathcal{X}_{\beta^{\vee}}(\zeta))\wedge \mathrm{d}\log (\mathcal{X}_{\beta}^{\text{sf}}(\zeta)) \otimes \partial_{\zeta} \,, 
\end{equation}
where (using the rescaled central charge from Remark \ref{rescalerem})

\begin{equation}
    \begin{split}
        \mathcal{X}_{\beta}^{\text{sf}}(x,\zeta)&=\exp[2\pi i \zeta^{-1}Z_{\beta}+i\theta_{\beta}-2\pi i  \zeta \overline{Z}_{\beta}]\,,\\
        \mathcal{X}_{\beta^{\vee}}(x,\zeta)&=\exp[2\pi i \zeta^{-1}Z_{\beta^{\vee}}+i\theta_{\beta^{\vee}}-2\pi i  \zeta \overline{Z}_{\beta^{\vee}}]\\
        &\;\;\;\;\; \cdot\exp \Big[-\frac{1}{4\pi i}\sum_{\gamma'\in \Gamma_{\pi(x)}}\Omega(\gamma')\langle \beta^{\vee},\gamma' \rangle\int_{\mathbb{R}_{-}2iZ_{\gamma'}}\frac{d\zeta'}{\zeta'}\frac{\zeta'+\zeta}{\zeta'-\zeta}\log(1-\mathcal{X}_{\gamma'}^{\text{sf}}(x,\zeta'))\Big]\,.\\
    \end{split}
\end{equation}

Denoting $\mathcal{X}_{\beta^{\vee}}^{\text{inst}}:=\mathcal{X}_{\beta^{\vee}}/\mathcal{X}_{\beta^{\vee}}^{\text{sf}}$ and using the explicit spectrum of the resolved conifold, one can write
\begin{equation}
    \begin{split}
    \mathcal{X}_{\beta^{\vee}}^{\text{inst}}(\zeta)=&\exp\Big(-\frac{1}{4\pi i}\sum_{n \in \mathbb{Z}} \int_{\mathbb{R}_{-}2iZ_{\beta +n\delta}}\frac{d\zeta'}{\zeta'}\frac{\zeta'+\zeta}{\zeta'-\zeta}\log (1- \mathcal{X}_{\beta+n\delta}^{\text{sf}}(\zeta'))\\
    &+\frac{1}{4\pi i}\sum_{n \in \mathbb{Z}} \int_{\mathbb{R}_{-}2iZ_{-\beta +n\delta}}\frac{d\zeta'}{\zeta'}\frac{\zeta'+\zeta}{\zeta'-\zeta}\log (1- \mathcal{X}_{-\beta+n\delta}^{\text{sf}}(\zeta'))\Big) \,.
    \end{split}
\end{equation}

Even though one can explicitly compute $\mathrm{d}\log \mathcal{X}_{\beta^{\vee}}^{\text{inst}}(\zeta)\wedge \mathrm{d} \log \mathcal{X}_{\beta}^{\text{sf}}(\zeta)$ in terms of Bessel functions (see equations (\ref{holsymp}) and (\ref{kf}), and the computations in \cite[Section 4.3 and 5.6]{GMN1}), $\mathcal{X}_{\beta^{\vee}}^{\text{inst}}(\zeta)$ has no known explicit expression as far as the authors know.\\

In the following we will take a certain conformal limit, considered in \cite{Gaiotto:2014bza}, of the twistor coordinates $\mathcal{X}_{\gamma}(x,\zeta)$. We will make a slight abuse of notation and denote the expressions obtained after the conformal limit by $\mathcal{X}_{\gamma}(t,\lambda)$, where $t \in M$ and $\lambda \in \mathbb{C}^{\times}$. It will turn out that the expression one gets by taking this limit for $\mathcal{X}_{\beta^{\vee}}^{\text{inst}}(x,\zeta)$ does not converge absolutely, but only conditionally. Regarding the conditionally convergent expression for $\mathcal{X}_{\beta^{\vee}}^{\text{inst}}(t,\lambda)$:

\begin{itemize}
    \item We will specify in Proposition \ref{coordconv} how to sum the terms of  $\mathcal{X}_{\beta^{\vee}}^{\text{inst}}(t,\lambda)$ in order to obtain a convergent expression. We will then show that the convergent expression satisfies properties related (but not entirely equivalent) to the Riemann-Hilbert problem considered in \cite{BridgelandCon}. More precisely, we show that it satisfies property (RH1) from Section \ref{RHprob}, but we only manage to show that (RH2) and (RH3) are satisfied along certain sectors, instead of half-planes.
    \item  Since the Riemann-Hilbert admits a (unique) solution in terms of the Faddeev quantum dialogarithm (see \cite{BridgelandCon}), the previous results allow us to conjecture a relation between $\mathcal{X}_{\beta^{\vee}}^{\text{inst}}(t,\lambda)$ and the quantum dilogarithm.
\end{itemize}

In Section \ref{conflimsec} we start by quickly reviewing the conformal limit considered in \cite{Gaiotto:2014bza}. We then review in Section \ref{RHprob} the Riemann-Hilbert problem of \cite{BridgelandCon} that will be relevant for us, and its solution.  After that, we show in Section \ref{solrh} that the conformal limit satisfies properties related to the Riemann-Hilbert problem, and state the corresponding conjecture.

\subsection{The conformal limit}\label{conflimsec}

We now briefly recall the conformal limit studied in \cite{Gaiotto:2014bza} for the twistor coordinates $\mathcal{X}_{\gamma}(x,\zeta)$. For simplicity, we restrict to the special case of the HK variation of BPS structures associated to the resolved conifold $(M,\Gamma,Z'=2iZ,\Omega)$.\\

To take the conformal limit, one picks $\theta_{f}=0$ (and hence $\theta_{\delta}=0$) and restricts to the complex Lagrangian submanifold $L\subset \mathcal{N}(\theta_{f}=0)$ of Proposition \ref{Lagrangian}, obtained by setting $\theta_{\beta}=\theta_{\beta^{\vee}}=0$. We then introduce a scaling parameter $R>0$ for the central charge $Z\to RZ$, and take a formal limit of $R\to 0$ while maintaining $\lambda:=\zeta/R$ fixed. If $\pi(x)=t$, where $\pi:\mathcal{N}(\theta_f=0)\to M$, we will, in a slight abuse of notation, denote the conformal limit of $\mathcal{X}_{\gamma}(x,\zeta)$ by $\mathcal{X}_{\gamma}(t,\lambda)$.  \\

For the functions $\mathcal{X}_{\beta}(x,\zeta)$ and $\mathcal{X}_{\delta}(x,\zeta)$ the conformal limit always exists (since $\mathcal{X}_{\gamma}=\mathcal{X}_{\gamma}^{\text{sf}}$ for $\gamma=\beta, \delta$)  and is given by
\begin{equation}
    \mathcal{X}_{\beta}(t,\lambda)=\exp\Big(\frac{2\pi i}{\lambda}Z_{\beta}\Big)=\exp\Big(\frac{2\pi i t}{\lambda}\Big), \;\;\;\; \mathcal{X}_{\delta}(t,\lambda)=\exp\Big(\frac{2\pi i}{\lambda}Z_{\delta}\Big)=\exp\Big(\frac{2\pi i }{\lambda}\Big)\, .
\end{equation}

On the other hand, the resulting expression for $\mathcal{X}_{\beta^{\vee}}(t,\lambda)$ is given by \cite[Equation 2.3]{Gaiotto:2014bza}

\begin{equation}\label{conflim}
\begin{split}
    \mathcal{X}_{\beta^{\vee}}(t,\lambda)&=\exp\Big(\frac{2\pi i Z_{\beta^{\vee}}}{\lambda}  -\sum_{n \in \mathbb{Z}}\frac{\lambda}{\pi i} \int_{\mathbb{R}_{-}2iZ_{\beta + n \delta}}\frac{d\lambda'}{(\lambda')^2-(\lambda)^2}\log (1- e^{2\pi i Z_{\beta + n \delta}/\lambda'})\Big)\\
    &=\exp\Big(\frac{2\pi i Z_{\beta^{\vee}}}{\lambda}  -\sum_{n \in \mathbb{Z}} \frac{\lambda}{\pi i}\int_{i\mathbb{R}_{-}(t+n)}\frac{d\lambda'}{(\lambda')^2-(\lambda)^2}\log (1- e^{2\pi i (t+n)/\lambda'})\Big)\,.
\end{split}
\end{equation}
In the previous expression, $\lambda$ should be away from the BPS rays $\{\pm i\mathbb{R}_{-}( t+n)\}_{n \in \mathbb{Z}}\cup i\mathbb{R}_{\pm}$ and we also assume that $\text{Im}(t)\neq 0$ so that the rays $\pm i\mathbb{R}_{-}( t+n)$ are not all collapsed into the rays $i\mathbb{R}_{\pm}$. We furthermore remark that the expression for $\mathcal{X}_{\beta^{\vee}}(t,\lambda)$ is purely formal. Indeed, as we will see below, the infinite sums are not absolutely convergent, but only conditionally convergent. In the next sections, we study this issue, and claim that there is a conditionally convergent expression for $\mathcal{X}_{\beta^{\vee}}(t,\lambda)$ that satisfies properties related (but not entirely equivalent) to the Riemann-Hilbert problem studied in \cite{BridgelandCon}.

\subsection{Review of Riemann-Hilbert problem from Bridgeland}\label{RHprob}

In \cite[Section 3.3]{BridgelandCon}, the following variation of BPS structures $(\widetilde{M},\widetilde{\Gamma},\widetilde{Z},\widetilde{\Omega})$ is associated to the resolved conifold:

\begin{itemize}
    \item $\widetilde{M}$ is a complex $2$-dimensional manifold defined by
    \begin{equation}
        \widetilde{M}:=\{(v,w)\in \mathbb{C}^{2}\;\;\; | \;\;\; w\neq 0 \;\; \text{and} \;\; v+nw \neq 0 \;\; \text{for all} \;\; n \in \mathbb{Z} \}\,.
    \end{equation}
    \item $\widetilde{\Gamma}\to \widetilde{M}$ is given by $\widetilde{\Gamma}=\Lambda \oplus \Lambda^*$, where $\Lambda=\mathbb{Z}\beta \oplus \mathbb{Z}\delta$, and $\Lambda^*=\mathbb{Z}\beta^{\vee}\oplus \mathbb{Z}\delta^{\vee}$ is the dual. The pairing $\langle - , - \rangle$ is given via the duality pairing:
    \begin{equation}
        \langle (\gamma_1,\alpha_1) , (\gamma_2,\alpha_2) \rangle= \alpha_1(\gamma_2)-\alpha_2(\gamma_1)\,.
    \end{equation}
    \item The central charge $\widetilde{Z}$ is defined by
    
    \begin{equation}
        \widetilde{Z}_{n\beta + m\delta}=2\pi i (nv+nw),\;\;\;\; \widetilde{Z}_{n\beta^{\vee}+m\delta^{\vee}}=0 \;\;\; \text{for} \;\;\; {n,m \in \mathbb{Z}} \,.
    \end{equation}
    \item The BPS spectrum is:
    \begin{equation}
\widetilde{\Omega}(\gamma) =
  \begin{cases}
    1       & \quad \text{if } \gamma = \pm \beta + n \delta\quad \text{for }\;\; n\in \mathbb{Z},\\
    -2  & \quad \text{if } \gamma= k \delta \quad \text{for} \;\;k\in \mathbb{Z}\setminus \{ 0\},\\
    0 & \quad \text{otherwise}\,.
  \end{cases}
\end{equation}

\end{itemize}

\begin{rem}\leavevmode
\begin{itemize}
        \item Notice that due to the choice of $\widetilde{Z}_{n\beta^{\vee}+m\delta^{\vee}}=0$, the tuple $(\widetilde{M},\widetilde{\Gamma},\widetilde{Z})$ is not a central charge ASK geometry, and hence $(\widetilde{M},\widetilde{\Gamma},\widetilde{Z},\widetilde{\Omega})$ is not an HK variation of BPS structures. The focus of \cite{BridgelandCon} is, however, on a certain Riemann-Hilbert problem defined by $(\widetilde{M},\widetilde{\Gamma},\widetilde{Z},\widetilde{\Omega})$, in which the values of $\widetilde{Z}_{n\beta^{\vee}+m\delta^{\vee}}$ enter in a non-interesting way. 
    \item If $(M,\Gamma,Z,\Omega)$ is the HK variation of BPS structures associated to the resolved conifold in Section \ref{instconifold}, then $M$ can be embedded in $\widetilde{M}$ via
    \begin{equation}
        t \to (t, 1)\,.
    \end{equation}
\end{itemize}
\end{rem}

\textbf{Riemann-Hilbert problem:} in \cite[Section 3.3]{BridgelandCon} a certain  Riemann-Hilbert problem is associated to  $(\widetilde{M},\widetilde{\Gamma},\widetilde{Z},\widetilde{\Omega})$ for a fixed $(v,w)\in \widetilde{M}$. As previously mentioned in the introduction and Remarks \ref{flavorcontribution}, \ref{flavorcontributionconifold}; the HK geometry we consider only takes into account part of the BPS spectrum of the resolved conifold, with $\Omega(k\delta)=-2$ making no contributions. As a consequence of this, we will only be able to make contact with part of the Riemann-Hilbert problem of \cite{BridgelandCon}. In reviewing the Riemann-Hilbert problem, we will then only focus on the part that will concern us.\\

We assume that $\text{Im}(v/w)>0$, and define the following rays:

\begin{equation}
    \begin{split}
    l_{n}&:=\mathbb{R}_{+}\cdot \widetilde{Z}_{\beta +n \delta}=\mathbb{R}_{+}\cdot(2\pi i(v +nw))\subset \mathbb{C}^{\times}\,,\\
    l_{\infty}&:=\mathbb{R}_{+}\cdot \widetilde{Z}_{\delta}=\mathbb{R}_{+}\cdot 2\pi iw\subset \mathbb{C}^{\times}\,.
    \end{split}
\end{equation}

\begin{dfn} The rays $\pm l_n$ for $n \in \mathbb{Z}$ and $\pm l_{\infty}$, will be called BPS rays.

\end{dfn}

For each ray $l$ different from the BPS rays we then want to find a holomorphic function $\Phi_{l,\beta^{\vee}}(v,w,-):\mathbb{H}_l\to \mathbb{C}^{\times}$ where $\mathbb{H}_{l}$ is the half plane centered at $l$. The functions  $\Phi_{l,\beta^{\vee}}(v,w,-)$ should satisfy the following:

\begin{itemize}
    \item (RH1) If $l$ lies between $l_{n-1}$ and $l_{n}$ and $l'$ lies between $l_n$ and $l_{n+1}$  then  on $\mathbb{H}_{l}\cap \mathbb{H}_{l'} $
    
    \begin{equation}\label{jump1}
        \Phi_{l,\beta^{\vee}}(v,w,\lambda)
        =\Phi_{l',\beta^{\vee}}(v,w,\lambda)(1-e^{-2\pi i(v+nw)/\lambda})\,.
    \end{equation}
    On the other hand, if $l$ lies between $-l_{n-1}$ and $-l_{n}$ and $l'$ lies between $-l_n$ and $-l_{n+1}$  then  on $\mathbb{H}_{l}\cap \mathbb{H}_{l'} $
        \begin{equation}\label{jump2}
        \Phi_{l,\beta^{\vee}}(v,w,\lambda)
        =\Phi_{l',\beta^{\vee}}(v,w,\lambda)(1-e^{2\pi i(v+nw)/\lambda})^{-1}\,.
    \end{equation}
    Furthermore, if $l$ lies between $l_0$ and $l_1$, while $l'$ lies between $-l_0$ and $-l_{-1}$ then we have on $\mathbb{H}_{l}\cap \mathbb{H}_{l'} $
    \begin{equation}\label{jump3}
        \Phi_{l,\beta^{\vee}}(v,w,\lambda)
        =\Phi_{l',\beta^{\vee}}(v,w,\lambda)\prod_{n>0 }(1-e^{-2\pi i( v+nw)/\lambda})\prod_{n>0} (1-e^{2\pi i( v-nw)/\lambda})^{-1}\,. \end{equation}
    Finally,  if $l$ lies between $l_0$ and $l_{-1}$, while $l'$ lies between $-l_0$ and $-l_1$ then we have on $\mathbb{H}_{l}\cap \mathbb{H}_{l'} $
    
    \begin{equation}\label{jump4}
        \Phi_{l',\beta^{\vee}}(v,w,\lambda)
        =\Phi_{l,\beta^{\vee}}(v,w,\lambda)\prod_{n> 0}(1-e^{-2\pi i( v-nw)/\lambda})\prod_{n>0} (1-e^{2\pi i( v+nw)/\lambda})^{-1} \,. \end{equation}
    
    \item (RH2) Given any ray $l$ away from the BPS rays,  $\Phi_{l,\beta^{\vee}}(v,w,-)$ satisfies 
    
    \begin{equation}
        \Phi_{l,\beta^{\vee}}(v,w,\lambda) \to 1  \;\;\;\; \text{as} \;\;\;\; \lambda \to 0, \;\; \lambda \in \mathbb{H}_l  \,.
    \end{equation}

    \item (RH3) With the same notation as the previous point, for any $\mathbb{H}_{l}$ there is $k >0$ such that
    \begin{equation}
    |\lambda|^{-k}<|\Phi_{l,\beta^{\vee}}(v,w,\lambda)|<|\lambda|^{k} \;\;\;\; \text{as} \;\;\;\; \lambda \to \infty, \;\; \lambda \in \mathbb{H}_l  \,.
    \end{equation}

\end{itemize}

A solution to this problem is unique (see for example \cite[Lemma 4.9]{BridgelandDT}). In particular, a solution is given in \cite{BridgelandCon} in terms of the  Faddeev quantum dilogarithm. In order to write it down, we briefly recall how the quantum dilogarithm is defined.

\subsubsection{The Faddeev quantum dilogarithm}\label{FQD}
 The definition involves the multiple sine functions, which in turn are defined using the Barnes multiple Gamma functions $\Gamma_r(z\, |\, \omega_1,...,\omega_r)$ \cite{Barnes}. For a variable $z\in \mathbb{C}$ and parameters $\omega_1,\ldots,\omega_r \in \mathbb{C}^{*}$ the multiple sine functions are defined by:
\begin{equation}
    \sin_r(z\,|\, \omega_1,\dots,\omega_r):= \Gamma_{r}(z\, |\, \omega_1,\dots,\omega_r) \cdot \Gamma_{r}\left(\sum_{i=1}^r \omega_i - z\, |\, \omega_1,\dots,\omega_r\right)^{(-1)^r} \,,
\end{equation}
for further definitions, see e.~g.~ \cite{BridgelandCon,Ruijsenaars1} and references therein. We introduce furthermore  the generalized Bernoulli polynomials, defined by the generating function:
\begin{equation}
    \frac{x^r\, e^{zx}}{ \prod_{i=1}^r (e^{\omega_i x}-1)} = \sum_{n=0}^{\infty} \frac{x^n}{n!} \, B_{r,n}(z\,|\, \omega_1,\,\dots,\omega_r)\,.
\end{equation}
The quantum dilogarithm is then defined by:
\begin{equation}
    H(t \,| \,\omega_1, \omega_2 ) := \exp\left(-\frac{\pi i }{2}\cdot B_{2,2}(t\,|\,\omega_1,\omega_2)\right) \cdot \sin_{2}(t\,|\, \omega_1,\omega_2)\,.
\end{equation}

\begin{prop} \label{qdilogprop} \cite[Prop 4.1]{BridgelandCon} The function $H(t\, | \, \omega_1,\omega_2)$ is a single-valued meromorphic function of variables $t\in\mathbb{C}$ and $\omega_1,\omega_2 \in \mathbb{C}^*$ under the assumption $\omega_1/\omega_2 \notin \mathbb{R}_{<0}$. It is symmetric in the arguments $\omega_1,\omega_2$ and invariant under simultaneous rescaling of all three arguments.

\end{prop}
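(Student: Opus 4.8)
The plan is to factor $H(t\,|\,\omega_1,\omega_2)=\exp\!\bigl(-\tfrac{\pi i}{2}B_{2,2}(t\,|\,\omega_1,\omega_2)\bigr)\cdot\sin_2(t\,|\,\omega_1,\omega_2)$ and to establish each of the three asserted properties — single-valued meromorphy in $(t,\omega_1,\omega_2)$ on $\{\omega_1/\omega_2\notin\mathbb{R}_{<0}\}$, symmetry under $\omega_1\leftrightarrow\omega_2$, and invariance under $(t,\omega_1,\omega_2)\mapsto(\lambda t,\lambda\omega_1,\lambda\omega_2)$ — separately for the two factors. Since all three properties are stable under products, this reduction suffices.

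The exponential prefactor is the elementary part. Expanding the generating function for $B_{2,n}$ shows that $B_{2,2}(t\,|\,\omega_1,\omega_2)$ is a degree-two polynomial in $t$ with coefficients rational in the periods, namely $\tfrac{1}{\omega_1\omega_2}\bigl(t^2-(\omega_1+\omega_2)t+\tfrac{1}{6}(\omega_1^2+3\omega_1\omega_2+\omega_2^2)\bigr)$. From this explicit form I would read off at once that the prefactor is holomorphic and nowhere vanishing in $t\in\mathbb{C}$ and in $(\omega_1,\omega_2)\in(\mathbb{C}^\times)^2$, symmetric in $\omega_1\leftrightarrow\omega_2$ (the generating function is), and invariant under simultaneous rescaling, since each term of $B_{2,2}$ is homogeneous of degree zero in $(t,\omega_1,\omega_2)$. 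Thus the prefactor contributes no poles, no zeros, no monodromy, and no scaling anomaly.

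The real content sits in the double sine function, which I would treat through the analytic theory of the Barnes double zeta and Gamma functions. Writing $\sigma=\omega_1+\omega_2$ and $\zeta_2(s,z\,|\,\omega)=\sum_{m,n\ge0}(z+m\omega_1+n\omega_2)^{-s}$, the series converges for $\mathrm{Re}(s)>2$ precisely when $\omega_1$ and $\omega_2$ lie in a common open half-plane bounded by a line through the origin, which is exactly the hypothesis $\omega_1/\omega_2\notin\mathbb{R}_{<0}$ (the two periods are not anti-parallel); $\zeta_2$ then continues meromorphically in $s$ and is regular at $s=0$, so that $\Gamma_2(z\,|\,\omega)$ is defined and $\sin_2$ is built from $\Gamma_2(z\,|\,\omega)$ and $\Gamma_2(\sigma-z\,|\,\omega)$. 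Symmetry is immediate, since the summation lattice $\mathbb{Z}_{\ge0}\omega_1+\mathbb{Z}_{\ge0}\omega_2$ is unchanged under $\omega_1\leftrightarrow\omega_2$, hence so are $\zeta_2$, $\Gamma_2$ and $\sin_2$. Scale-invariance follows from the homogeneity $\zeta_2(s,\lambda z\,|\,\lambda\omega)=\lambda^{-s}\zeta_2(s,z\,|\,\omega)$, together with the special value $\zeta_2(0,z\,|\,\omega)=\tfrac12 B_{2,2}(z\,|\,\omega)$ and the reflection identity $B_{2,2}(\sigma-z\,|\,\omega)=B_{2,2}(z\,|\,\omega)$: these pin down the scaling of $\Gamma_2(z)$ and $\Gamma_2(\sigma-z)$, and in the reflection combination defining the double sine the scaling contributions organise so that $\sin_2$ is scale-invariant. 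Meromorphy and single-valuedness in $z$ then follow from the explicit divisor of $\Gamma_2(z\,|\,\omega)$, supported on the shifted lattice $-\mathbb{Z}_{\ge0}\omega_1-\mathbb{Z}_{\ge0}\omega_2$ and its reflection.

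I expect the genuine obstacle to be the joint dependence on $z$ and on the period variables, in particular the single-valuedness of $\sin_2$ as a function of $(\omega_1,\omega_2)$. The double Gamma function is only canonically defined once a half-plane containing both periods is fixed, and rotating this half-plane as $(\omega_1,\omega_2)$ varies over $\{\omega_1/\omega_2\notin\mathbb{R}_{<0}\}$ can in principle introduce branch ambiguities in the periods. The crux of the argument is to verify that these ambiguities cancel exactly in the reflection product $\Gamma_2(z\,|\,\omega)\,\Gamma_2(\sigma-z\,|\,\omega)^{(-1)^2}$, so that $\sin_2$, and hence $H$, descends to a genuinely single-valued meromorphic function on the cut domain. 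Once this continuation is controlled, symmetry, scale-invariance, and the location of poles and zeros all follow from the homogeneity and symmetry of the defining lattice sum established above, completing the proof.
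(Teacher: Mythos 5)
The paper does not prove this statement: Proposition \ref{qdilogprop} is imported verbatim from Bridgeland's conifold paper, so there is no in-paper argument to compare against, and your proposal has to stand on its own as a reconstruction of the standard Barnes-theory proof. As an outline it is headed in the right direction — the explicit quadratic $B_{2,2}(t\,|\,\omega_1,\omega_2)=\tfrac{1}{\omega_1\omega_2}\bigl(t^2-(\omega_1+\omega_2)t+\tfrac16(\omega_1^2+3\omega_1\omega_2+\omega_2^2)\bigr)$ is correct and degree-zero homogeneous, the identification of $\{\omega_1/\omega_2\notin\mathbb{R}_{<0}\}$ with the condition that the cone $\mathbb{Z}_{\ge0}\omega_1+\mathbb{Z}_{\ge0}\omega_2$ be salient is right, and symmetry of the lattice sum does give symmetry of $\zeta_2$, $\Gamma_2$, $\sin_2$. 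But there are two genuine gaps. First, you yourself name the crux — single-valued joint meromorphy in $(\omega_1,\omega_2)$ as the defining half-plane rotates over the cut domain — and then do not prove it; saying ``the crux is to verify that these ambiguities cancel'' is a statement of the problem, not an argument. This is precisely the content of Bridgeland's Proposition 4.1 (following Barnes and Ruijsenaars), and the standard way to settle it is a contour-integral representation of $\zeta_2(s,z\,|\,\omega)$ (Hankel-type, or Ruijsenaars' integral for $\log\Gamma_2$) that is manifestly holomorphic in all variables on the domain, after which the divisor and the difference equations $\Gamma_2(z+\omega_i\,|\,\omega)=\Gamma_1(z\,|\,\omega_j)^{-1}\Gamma_2(z\,|\,\omega)$ extend everything in $z$. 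Without some such device the proof is incomplete at its hardest point.

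Second, your scale-invariance computation fails in the form you set it up. With $\Gamma_2:=\exp\bigl(\partial_s\zeta_2(s,z\,|\,\omega)\big|_{s=0}\bigr)$, the homogeneity $\zeta_2(s,\lambda z\,|\,\lambda\omega)=\lambda^{-s}\zeta_2(s,z\,|\,\omega)$ and the special value $\zeta_2(0,z\,|\,\omega)=\tfrac12 B_{2,2}(z\,|\,\omega)$ give $\Gamma_2(\lambda z\,|\,\lambda\omega)=\lambda^{-\frac12 B_{2,2}(z|\omega)}\,\Gamma_2(z\,|\,\omega)$. In the \emph{product} $\Gamma_2(z\,|\,\omega)\,\Gamma_2(\sigma-z\,|\,\omega)$ — which is what the paper's displayed definition of $\sin_2$ literally says, and what you adopt — the two scaling exponents \emph{add} to $-B_{2,2}(z\,|\,\omega)\neq 0$, precisely because of the reflection identity $B_{2,2}(\sigma-z)=B_{2,2}(z)$ that you invoke; nothing cancels. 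Cancellation happens only in the quotient $\Gamma_2(\sigma-z\,|\,\omega)/\Gamma_2(z\,|\,\omega)$, which is the actual Bridgeland--Ruijsenaars convention, $\sin_r(z\,|\,\omega)=\Gamma_r(z\,|\,\omega)^{-1}\Gamma_r(|\omega|-z\,|\,\omega)^{(-1)^r}$ (check $r=1$ against $\Gamma(x)\Gamma(1-x)=\pi/\sin\pi x$). The paper's formula has evidently dropped an inverse on the first factor; you need to either correct the convention or carry out the exponent bookkeeping explicitly, because as written your claimed cancellation is false and the resulting $H$ would not be scale-invariant.
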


Using the quantum dilogarithm, we have the following:

\begin{thm}\cite[Theorem 5.2]{BridgelandCon} \label{quantumdilogRH}Take $(v,w)\in M$ with $\text{Im}(v/w)>0$ and consider the solution of the previous Riemann-Hilbert problem $\{\Phi_{l,\beta^{\vee}}(v,w,-)\}_{l}$. If $l$ is a ray in the sector between $l_0$ and $l_{-1}$, then  $\Phi_{l,\beta^{\vee}}(v,w,-)$ satisfies

\begin{equation}
    \Phi_{l,\beta^{\vee}}(v,w,\lambda)=H(v|w,-\lambda)e^{Q_H(v|w,-\lambda)}\,,
\end{equation}
where $H(t|\omega_1,\omega_2)$ is the quantum dilogarithm and 
\begin{equation}
    Q_H(t|\omega_1,\omega_2):=-\frac{\omega_1}{2\pi i\omega_2}\mathrm{Li}_2(e^{2\pi it/\omega_1})-\frac{1}{2}\log(1-e^{2\pi i t/\omega_1})+\frac{\pi}{12}\frac{\omega_2}{\omega_1}\,.
\end{equation}
\end{thm}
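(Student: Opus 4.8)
The plan is to exploit the uniqueness of solutions to this Riemann--Hilbert problem, recorded in \cite[Lemma 4.9]{BridgelandDT}: it suffices to verify that the function $H(v|w,-\lambda)e^{Q_H(v|w,-\lambda)}$, together with the functions obtained from it by imposing the prescribed jumps \eqref{jump1}--\eqref{jump4} across the BPS rays, assembles into a family $\{\Phi_{l,\beta^{\vee}}\}_l$ satisfying (RH1), (RH2) and (RH3). Concretely, I would fix the candidate on the sector between $l_0$ and $l_{-1}$ to be the stated expression, extend it to every other sector by declaring the jumps dictated by the BPS spectrum $\widetilde{\Omega}$ to hold by definition, and then check that the resulting family is well defined (the pairwise relations \eqref{jump1}--\eqref{jump4} compose consistently, which is exactly the Kontsevich--Soibelman wall-crossing identity for $\widetilde{\Omega}$) and has the correct analytic behaviour as $\lambda\to0$ and $\lambda\to\infty$. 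Since $H$ is meromorphic by Proposition \ref{qdilogprop}, it is enough to check these relations on sector overlaps.

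The verification of (RH1) would rest on the functional equations of the Faddeev quantum dilogarithm. Writing $H$ through the multiple sine function $\sin_2(t|\omega_1,\omega_2)$, one uses the shift relations of $\sin_2$, which reduce $\sin_2(t+\omega_i|\,\cdots)/\sin_2(t|\,\cdots)$ to an ordinary sine, i.e.\ to a factor $1-e^{\pm2\pi i t/\omega_j}$, to read off the single-ray jumps. Equivalently, and more transparently, I would use the factorization of $H(v|w,-\lambda)$ into two ``half'' infinite products in the nomes $e^{\pm2\pi i w/\lambda}$, each convergent on a half-plane in $\lambda$: the zeros and poles of these $q$-Pochhammer factors lie precisely along the rays $\pm l_n$, and crossing one of them trades a single factor $(1-e^{\pm2\pi i(v+nw)/\lambda})^{\pm1}$, matching \eqref{jump1}--\eqref{jump2}. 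The product jumps \eqref{jump3}--\eqref{jump4} across the sectors separated by the rays accumulating at $\pm l_\infty$ then follow by collecting the infinitely many single-factor contributions, the convergence of the resulting infinite products being guaranteed by $\mathrm{Im}(v/w)>0$. The counterterm $e^{Q_H}$ contributes no jump, since $Q_H$ is single-valued and holomorphic away from the BPS rays.

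For (RH2) and (RH3) I would invoke the asymptotic expansions of the quantum dilogarithm in the two regimes $\lambda\to0$ and $\lambda\to\infty$, which follow from the Barnes multiple Gamma asymptotics via the generalized Bernoulli polynomials $B_{2,n}$. The point of the explicit $Q_H$ is that it is exactly the leading (semiclassical) part of $\log H(v|w,-\lambda)$ as $\lambda\to0$: the $\mathrm{Li}_2(e^{2\pi iv/w})$ and $\log(1-e^{2\pi iv/w})$ terms cancel the growing and the finite pieces of $\log H$, so that $He^{Q_H}\to1$, giving (RH2); in the regime $\lambda\to\infty$ the same expansions, after the $B_{2,2}$ prefactor in the definition of $H$ is accounted for, show that $\log H$ grows only logarithmically in $\lambda$, yielding the two-sided polynomial bound (RH3). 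The main obstacle, and the place where genuine care is required, is precisely the control of these asymptotics uniformly along rays and the bookkeeping that matches the $B_{2,2}$ Bernoulli prefactor against the $Q_H$ counterterm; once the $\lambda\to0$ and $\lambda\to\infty$ expansions of $H$ are established, properties (RH1)--(RH3) hold and uniqueness completes the proof.
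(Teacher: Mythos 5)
This statement is not proved in the paper at all: it is quoted verbatim from \cite[Theorem 5.2]{BridgelandCon}, so there is no internal argument to compare yours against. Your strategy --- pin the candidate $H(v|w,-\lambda)e^{Q_H(v|w,-\lambda)}$ on the sector between $l_0$ and $l_{-1}$, propagate it by the prescribed jumps, verify (RH1) from the shift relations of $\sin_2$ (equivalently the factorization of $H$ into two half-plane-convergent $q$-Pochhammer-type products with zeros and poles on the rays $\pm l_n$), verify (RH2)--(RH3) from the Barnes asymptotics, and conclude by the uniqueness statement of \cite[Lemma 4.9]{BridgelandDT} --- is essentially the argument carried out in the cited source, and it is the right one.

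Two points deserve more care than your sketch gives them. First, the claim that ``$\log H$ grows only logarithmically in $\lambda$'' as $\lambda\to\infty$ cannot be taken literally: with $t=v$, $\omega_1=w$, $\omega_2=-\lambda$, the counterterm $Q_H$ contains the piece $\frac{\pi}{12}\frac{\omega_2}{\omega_1}=-\frac{\pi\lambda}{12w}$, which is \emph{linear} in $\lambda$, so if $\log H$ were only logarithmic the product $He^{Q_H}$ would grow or decay exponentially and (RH3) would fail. What must actually be checked is that the linear-in-$\lambda$ term produced by the $B_{2,2}$ prefactor and the large-$\omega_2$ expansion of $\sin_2$ cancels this piece exactly, leaving $\log(He^{Q_H})=\mathcal{O}(\log|\lambda|)$; this is precisely the bookkeeping you defer. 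Second, (RH2) and (RH3) must hold on the full half-planes $\mathbb{H}_l$, not merely on sectors bounded by consecutive BPS rays --- this is exactly the distinction that prevents the present paper from upgrading Propositions \ref{asymptotic1} and \ref{asymptotic2} into a proof of Conjecture \ref{Theorem2}. For the double sine function this is achievable because the Barnes asymptotic expansions are uniform on closed subsectors of $\omega_1/\omega_2\notin\mathbb{R}_{<0}$ (cf.\ Proposition \ref{qdilogprop}), but that uniformity has to be invoked explicitly rather than assumed; without it the uniqueness lemma does not apply and the identification with $\{\Phi_{l,\beta^{\vee}}\}_l$ does not follow.
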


\subsection{The conformal limit and relation to the Riemann-Hilbert problem}\label{solrh}

To study the integral terms of (\ref{conflim}), we will use the following expressions involving the Gamma function $\Gamma(z)$ (see also \cite{Gaiotto:2014bza}\footnote{Note that in \cite{Gaiotto:2014bza} there is a sign mistake on the integral term of the corresponding formula $(3.4)$.}): for $\text{Re}(z)>0$ we have 
\begin{equation}\label{loggammaid}
    \log(\Gamma(z))=z(\log(z)-1)+\log\Big(\sqrt{\frac{2\pi}{z}}\Big)-\frac{1}{\pi}\int_0^{\infty}\frac{ds}{s^2+1}\log(1-e^{-2\pi z/s})\,.
\end{equation}

We denote by $\mu(z)$ the function defined for $\text{Re}(z)>0$ and given by

\begin{equation}\label{Binet}
    \begin{split}
    \mu(z)&:=\log(\Gamma(z))-\Big(z-\frac{1}{2}\Big)\log(z) + z -\frac{1}{2}\log(2\pi)\\
    &=-\frac{1}{\pi}\int_0^{\infty}\frac{ds}{s^2+1}\log(1-e^{-2\pi z/s})\,.\\
    \end{split}
\end{equation}

This function is sometimes known as Binet's function.

\begin{lem} \label{intmu} Let $n\in \mathbb{Z}$ and $t\in \mathbb{C}^{\times}$ with $\mathrm{Im}(t)\neq 0$. Furthermore, assume that $\lambda \in \mathbb{C}^{\times}$ is not in $\pm i\mathbb{R}_{-}(t+n)$. We then have
\begin{equation}
    -\frac{\lambda}{\pi i}\int_{i\mathbb{R}_{-}(t+n)}\frac{d\lambda'}{(\lambda')^2-(\lambda)^2}\log (1- e^{2\pi i (t+n)/\lambda'})=\begin{cases}
      \mu\Big(\dfrac{t+n}{\lambda}\Big) \;\;\; \text{if} \;\;\; \mathrm{Re}((t+n)/\lambda)>0\\
      \\
      -\mu\Big(-\dfrac{t+n}{\lambda}\Big)\;\;\; \text{if} \;\;\; \mathrm{Re}((t+n)/\lambda)<0\,.\\
    \end{cases}
\end{equation}

\end{lem}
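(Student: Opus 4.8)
The plan is to convert the contour integral over the BPS ray $i\mathbb{R}_-(t+n)$ into a real integral and then recognise it as Binet's function via the representation \eqref{Binet}. Write $a:=t+n$ and $z:=a/\lambda$; the hypothesis $\lambda\notin\pm i\mathbb{R}_-(t+n)$ is exactly the statement $z\notin i\mathbb{R}$, i.e. $\mathrm{Re}(z)\neq 0$, and the two cases of the Lemma correspond to the sign of $\mathrm{Re}(z)$. First I would parametrise the ray by $\lambda'=-isa$, $s\in(0,\infty)$; then $(\lambda')^2-\lambda^2=-s^2a^2-\lambda^2$, $\mathrm{d}\lambda'=-ia\,\mathrm{d}s$, and $e^{2\pi i a/\lambda'}=e^{-2\pi/s}\in(0,1)$, so the logarithm is real and its branch is unambiguous. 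Collecting constants reduces the left-hand side to $-\tfrac{z}{\pi}\int_0^\infty \frac{\log(1-e^{-2\pi/s})}{z^2s^2+1}\,\mathrm{d}s$, and the substitution $u=1/s$ brings this to $J(z):=-\tfrac{z}{\pi}\int_0^\infty\frac{\log(1-e^{-2\pi u})}{z^2+u^2}\,\mathrm{d}u$, which converges precisely because $z\notin i\mathbb{R}$ keeps $z^2+u^2$ away from $0$ for $u>0$.

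Next I would identify $J(z)=\mu(z)$ when $\mathrm{Re}(z)>0$. Starting again from the $s$-integral $-\tfrac{z}{\pi}\int_0^\infty \frac{\log(1-e^{-2\pi/s})}{z^2s^2+1}\,\mathrm{d}s$, the substitution $\sigma=zs$ removes the prefactor and yields $-\tfrac1\pi\int_{\mathbb{R}_+z}\frac{\log(1-e^{-2\pi z/\sigma})}{\sigma^2+1}\,\mathrm{d}\sigma$, an integral along the ray $\mathbb{R}_+z$. Rotating this contour back to the positive real axis reproduces exactly the integral in \eqref{Binet}, giving $J(z)=\mu(z)$. The main obstacle is justifying this rotation: I would check that the integrand is holomorphic in the closed sector bounded by $\mathbb{R}_+$ and $\mathbb{R}_+z$ — the poles $\sigma=\pm i$ of $(\sigma^2+1)^{-1}$ and the zeros of $1-e^{-2\pi z/\sigma}$ (located where $z/\sigma\in i\mathbb{Z}\setminus\{0\}$, i.e. at argument $\arg z\pm\tfrac\pi2$) all lie outside this sector, whose opening angle is $|\arg z|<\tfrac\pi2$ — and that the two connecting circular arcs contribute nothing in the limit. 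For the small arc near $0$ one uses that $\mathrm{Re}(z/\sigma)>0$ throughout the sector, so $\log(1-e^{-2\pi z/\sigma})\to 0$; for the large arc one uses the logarithmic growth of the numerator against the $\sigma^{-2}$ decay of $(\sigma^2+1)^{-1}$.

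Finally, for $\mathrm{Re}(z)<0$ I would avoid a second contour rotation by exploiting that $J$ is manifestly odd, $J(-z)=-J(z)$, as seen from its $u$-integral form. Since $\mathrm{Re}(-z)>0$, the previous step gives $J(-z)=\mu(-z)$, hence $J(z)=-\mu(-z)$. Translating back via $z=(t+n)/\lambda$ then yields $\mu\big((t+n)/\lambda\big)$ when $\mathrm{Re}((t+n)/\lambda)>0$ and $-\mu\big(-(t+n)/\lambda\big)$ when $\mathrm{Re}((t+n)/\lambda)<0$, which is precisely the claimed formula.
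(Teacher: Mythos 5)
Your proof is correct and follows essentially the same route as the paper: both reduce the integral to Binet's representation \eqref{Binet} by a contour rotation (your rotation of $\sigma$ from $\mathbb{R}_+z$ to $\mathbb{R}_+$ is, after the rescaling $\sigma=i\lambda'/\lambda$, exactly the paper's deformation of $\lambda'$ from $i\mathbb{R}_-(t+n)$ to $\mp i\lambda\mathbb{R}_+$). The only differences are cosmetic: you justify the rotation explicitly (locating the poles and branch points outside the sector and bounding the arcs), which the paper leaves implicit, and you dispatch the case $\mathrm{Re}((t+n)/\lambda)<0$ by the oddness of $J$ rather than by a second deformation.
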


\begin{proof} This is basically the same idea of \cite{Gaiotto:2014bza} used to explicitly compute the instanton correction terms of the conformal limit. \\

Since $\lambda$ is not in $\pm i\mathbb{R}_{-}(t+n)$, we must have $\text{Re}((t+n)/\lambda)\neq 0$. Assume first that $\lambda$ is such that $\text{Re}((t+n)/\lambda)<0$. We can then deform the contour of 

\begin{equation}
    -\frac{\lambda}{\pi i} \int_{i\mathbb{R}_{-}(t+n)}\frac{d\lambda'}{(\lambda')^2-(\lambda)^2}\log (1- e^{2\pi i(t+n)/\lambda'})
\end{equation}
to $\lambda'=i\lambda s$ for $s\in (0,\infty)$, and obtain

\begin{equation}
    -\frac{\lambda}{\pi i} \int_{i\mathbb{R}_{-}(t+n)}\frac{d\lambda'}{(\lambda')^2-(\lambda)^2}\log (1- e^{2\pi i(t+n)/\lambda'})=\frac{1}{\pi}\int_0^{\infty}\frac{ds}{s^2+1}\log(1-e^{2\pi\frac{t+n}{\lambda s}})\,.
\end{equation}
Taking $z=-(t+n)/\lambda$ we see that $\text{Re}(z)>0$, so 

\begin{equation}
    -\frac{\lambda}{\pi i} \int_{i\mathbb{R}_{-}(t+n)}\frac{d\lambda'}{(\lambda')^2-(\lambda)^2}\log (1- e^{2\pi i(t+n)/\lambda'})=-\mu\Big(-\frac{t+n}{\lambda}\Big)\,.
\end{equation}
 The case $\text{Re}((t+n)/\lambda)>0$ is similar. We deform the contour to $\lambda'=-i\lambda s$
 and obtain

\begin{equation}
    -\frac{\lambda}{\pi i} \int_{i\mathbb{R}_{-}(t+n)}\frac{d\lambda'}{(\lambda')^2-(\lambda)^2}\log (1- e^{2\pi i(t+n)/\lambda'})=-\frac{1}{\pi}\int_0^{\infty}\frac{ds}{s^2+1}\log(1-e^{-2\pi\frac{t+n}{\lambda s}})\,.
\end{equation}
Taking $z=(t+n)/\lambda$ we see that $\text{Re}(z)>0$, and then 
\begin{equation}
    -\frac{\lambda}{\pi i} \int_{i\mathbb{R}_{-}( t+n )}\frac{d\lambda'}{(\lambda')^2-(\lambda)^2}\log (1- e^{2\pi i( t+n )/\lambda'})=\mu\Big(\frac{t+n}{\lambda}\Big)\,.
\end{equation}
\end{proof}
\begin{prop} \label{coordconv} Let $(t,\lambda)\in (\mathbb{C}^{\times})^2$ with $\mathrm{Im}(t)\neq 0$ and $\lambda$ away from the BPS rays $\{\pm i\mathbb{R}_{-}( t+n)\}_{n\in \mathbb{Z}}\cup i\mathbb{R}_{\pm}$. The sum

\begin{equation}\label{formsum}
    -\sum_{n \in \mathbb{Z}} \frac{\lambda}{\pi i}\int_{i\mathbb{R}_{-}(t+n)}\frac{d\lambda'}{(\lambda')^2-(\lambda)^2}\log (1- e^{2\pi i (t+n)/\lambda'})
\end{equation}
does not converge absolutely, but only conditionally. In particular, the expression 

\begin{equation}\label{condconv}
    -\sum_{n> 0} \Big(\frac{\lambda}{\pi i}\int_{i\mathbb{R}_{-}(t+n)}\frac{d\lambda'}{(\lambda')^2-(\lambda)^2}\log (1- e^{2\pi i (t+n)/\lambda'})+\frac{\lambda}{\pi i}\int_{i\mathbb{R}_{-}(t-n)}\frac{d\lambda'}{(\lambda')^2-(\lambda)^2}\log (1- e^{2\pi i (t-n)/\lambda'})\Big)
\end{equation}
converges uniformly in $\lambda$ on compact subsets avoiding the integration contours. \\

Letting $a_n(t,\lambda):=\text{Re}((t+n)/\lambda)$, the tail of the conditionally convergent expression (\ref{condconv}) can be rewritten as follows: for $M>0$ sufficiently big and $n>M$, we either have $a_n>0$ and $a_{-n}<0$, or $a_n<0$ and $a_{-n}>0$

\begin{itemize}
    \item  In the first case, the tail of (\ref{condconv}) is given by 
    \begin{equation}
        \sum_{n>M}\Big(\mu\Big(\frac{t+n}{\lambda}\Big)-\mu\Big(-\frac{t-n}{\lambda}\Big)\Big)\,.
    \end{equation}
    \item In the second case, the tail of (\ref{condconv}) is given by 
    
    \begin{equation}
        \sum_{n>M}\Big(-\mu\Big(-\frac{t+n}{\lambda}\Big)+\mu\Big(\frac{t-n}{\lambda}\Big)\Big)\,.
    \end{equation}
\end{itemize}

\end{prop}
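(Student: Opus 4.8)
The plan is to reduce the whole statement to the asymptotics of Binet's function $\mu$ via Lemma \ref{intmu}, turning the infinite sum over $n$ into a sum of values $\pm\mu(\pm(t+n)/\lambda)$ whose convergence is then read off from the expansion of $\mu$ at infinity. The cancellation between the $+n$ and $-n$ contributions is what upgrades the decay from $1/n$ to $1/n^2$, and this is the whole point of the symmetrization in (\ref{condconv}).

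First I would apply Lemma \ref{intmu} termwise. Set $a_n := \mathrm{Re}((t+n)/\lambda) = \mathrm{Re}(t/\lambda) + n\,\mathrm{Re}(1/\lambda)$. Since $\lambda$ avoids $i\mathbb{R}_{\pm}$, i.e. $\lambda \notin i\mathbb{R}$, one has $\mathrm{Re}(1/\lambda) \neq 0$, so $a_n$ is affine in $n$ with nonzero slope. Hence there is $M>0$ with $\mathrm{sign}(a_n) = \mathrm{sign}(n\,\mathrm{Re}(1/\lambda))$ constant for $|n|>M$; in particular $a_n$ and $a_{-n}$ have opposite signs for $n>M$, which is exactly the dichotomy recorded in the statement. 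For $|n|>M$, Lemma \ref{intmu} rewrites the $n$-th integral as $\mu((t+n)/\lambda)$ when $a_n>0$ and as $-\mu(-(t+n)/\lambda)$ when $a_n<0$, and pairing index $n$ with $-n$ produces precisely the two tail expressions claimed. The finitely many terms with $|n|\le M$ are individually finite because $\lambda$ avoids the integration contours.

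Second I would invoke the classical uniform expansion $\mu(z) = \frac{1}{12z} + O(|z|^{-3})$ as $|z|\to\infty$ with $\mathrm{Re}(z)>0$, which follows from the integral representation (\ref{Binet}). Because $t$ ranges over a compact set with $\mathrm{Im}(t)$ bounded away from $0$ and $\lambda$ over a compact set avoiding the BPS rays, every argument $\pm(t\pm n)/\lambda$ occurring above lies in the right half-plane, so the expansion holds uniformly in the parameters. Non-absolute convergence is then immediate, since each term has modulus $\sim \tfrac{|\lambda|}{12|n|}$ and $\sum_n|\mathrm{term}_n|$ diverges by comparison with the harmonic series. For the symmetrized sum the leading $\frac{1}{12z}$ terms cancel: in the first case
\begin{equation}
\mu\Big(\frac{t+n}{\lambda}\Big) - \mu\Big(\frac{n-t}{\lambda}\Big) = \frac{\lambda}{12}\Big(\frac{1}{t+n} - \frac{1}{n-t}\Big) + O(n^{-3}) = \frac{-\lambda t}{6(n^2 - t^2)} + O(n^{-3}) = O(n^{-2}),
\end{equation}
uniformly on the compact parameter set, and likewise in the second case. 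A summable uniform bound $O(n^{-2})$ over the pairs $n>M$ yields the asserted uniform convergence of (\ref{condconv}) on compact subsets avoiding the contours.

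The main obstacle is the uniformity: one must verify that as $n$ grows the arguments $(t\pm n)/\lambda$ remain inside a fixed sector $|\arg z|\le \pi/2$ on which both the expansion of $\mu$ and its error term are uniform over the compact set of parameters, so that the $O(n^{-2})$ estimate for the paired terms comes with a constant independent of $(t,\lambda)$. Once this uniformity is secured, the identification of the two tails is just the bookkeeping of the sign cases already isolated in the first step.
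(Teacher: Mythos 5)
Your proposal is correct and follows essentially the same route as the paper's proof: apply Lemma \ref{intmu} termwise after isolating the sign dichotomy of $a_{\pm n}$ for $|n|>M$, then use the large-$z$ expansion of Binet's function so that the leading $B_2/(2z)$ terms cancel in the paired sum (your $\frac{1}{12z}$ is exactly $\frac{B_2}{2z}$, and your $\frac{-\lambda t}{6(n^2-t^2)}$ matches the paper's $\frac{\lambda t B_2}{t^2-n^2}$), while the surviving $O(|z|^{-1})$ size of the individual terms rules out absolute convergence by comparison with the harmonic series. Your explicit remarks on why $\mathrm{Re}(1/\lambda)\neq 0$ forces the sign dichotomy and on the uniformity of the expansion over compact parameter sets only make explicit what the paper leaves as "easy to check."
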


\begin{proof}

It is easy to check that under our hypotheses, for $M>0$ sufficiently big and $n>M$ we either have $a_{n}>0$ and $a_{-n}<0$,  or $a_{n}<0$ and $a_{-n}>0$. We assume the first case, since the second is analogous.  This means that for $n>M$, we can write using Lemma \ref{intmu}:

\begin{equation}
    \begin{split}
    -\sum_{n> M}& \Big(\frac{\lambda}{\pi i}\int_{i\mathbb{R}_{-}(t+n)}\frac{d\lambda'}{(\lambda')^2-(\lambda)^2}\log (1- e^{2\pi i (t+n)/\lambda'})+\frac{\lambda}{\pi i}\int_{i\mathbb{R}_{-}(t-n)}\frac{d\lambda'}{(\lambda')^2-(\lambda)^2}\log (1- e^{2\pi i (t-n)/\lambda'})\Big)\\
    &=\sum_{n>M}\Big(\mu\Big(\frac{t+n}{\lambda}\Big)-\mu\Big(-\frac{t-n}{\lambda}\Big)\Big)\,.
    \end{split}
\end{equation}

We now use the fact that for $\text{Re}(z)>0$, we have the following asymptotic expansion of Binet's function as $z\to \infty$:

\begin{equation}
    \mu(z)=\sum_{m=1}^n\frac{B_{2m}}{(2m-1)2mz^{2m-1}}+ \mathcal{O}(|z|^{1-2n})\,,
\end{equation}
where $B_{n}$ are the Bernoulli numbers.\\

In particular, for $n>M$

\begin{equation}
    \mu\Big(\frac{t+n}{\lambda}\Big)-\mu\Big(-\frac{t-n}{\lambda}\Big)=\frac{\lambda tB_2}{t^2-n^2} + \frac{B_4 \lambda^3}{12 (t+n)^3}-\frac{B_4 \lambda^3}{12 (n-t)^3}+\mathcal{O}\Big(\frac{\lambda^3}{n^3}\Big)\,,
\end{equation}
where we used that

\begin{equation}\label{diff}
    \frac{\lambda B_2}{2(t+n)}-\frac{\lambda B_2}{2(-t+n)}=\frac{\lambda tB_2}{t^2-n^2}\,.
\end{equation}
We then conclude that 
\begin{equation}
    \sum_{n>M}\Big(\mu\Big(\frac{t+n}{\lambda}\Big)-\mu\Big(-\frac{t-n}{\lambda}\Big)\Big)
\end{equation}
converges uniformly in $\lambda$ for compact subsets avoiding the integration contours.\\

On the other hand, since
\begin{equation}
    |\mu(z)|=\mathcal{O}(|z|^{-1})\,,
\end{equation}
it is easy to check that (\ref{formsum}) cannot converge absolutely. 
\end{proof}

\begin{rem}
From now on, every time we write $\mathcal{X}_{\beta^{\vee}}^{\text{inst}}(t,\lambda)$ we will mean the convergent expression (\ref{condconv}) from Proposition \ref{coordconv}. Namely, 

\begin{equation}
    \begin{split}
        \log(\mathcal{X}_{\beta^{\vee}}^{\text{inst}}(t,\lambda))&=-\frac{\lambda}{\pi i}\int_{i\mathbb{R}_{-}t}\frac{d\lambda'}{(\lambda')^2-(\lambda)^2}\log (1- e^{2\pi i t/\lambda'})\\
            &\;\;\;\;-\sum_{n> 0} \Big[\frac{\lambda}{\pi i}\int_{i\mathbb{R}_{-}(t+n)}\frac{d\lambda'}{(\lambda')^2-(\lambda)^2}\log (1- e^{2\pi i (t+n)/\lambda'})\\
            &\;\;\;\;\;\;\;\;\;\;\;+\frac{\lambda}{\pi i}\int_{i\mathbb{R}_{-}(t-n)}\frac{d\lambda'}{(\lambda')^2-(\lambda)^2}\log (1- e^{2\pi i (t-n)/\lambda'})\Big]\\
    \end{split}
\end{equation}
\end{rem}

\begin{dfn}\label{halfplanecont}Given $\mathcal{X}_{\beta^{\vee}}(t,\lambda)$  and a ray $l$ not equal to the BPS rays, we can define a holomorphic function $\mathcal{X}_{l,\beta^{\vee}}(t,-):\mathbb{H}_{l}\to \mathbb{C}^{\times}$ as follows:

\begin{itemize}
    \item Let $\Sigma \subset \mathbb{H}_l$ be any closed subsector containing $l$.  For the BPS rays contained in $\mathbb{H}_l$, we can deform them to $\mathbb{H}_l-\Sigma$ without crossing $l$ and maintaining the relative ordering. For the BPS rays contained in $-\mathbb{H}_l$ we similarly deform them to $-(\mathbb{H}_l-\Sigma)$ without crossing $-l$ and maintaining the relative ordering. 
    \item This defines an analytic continuation $\mathcal{X}_{\Sigma,\beta^{\vee}}(t,-):\Sigma \to \mathbb{C}^{\times}$ of $\mathcal{X}_{\beta^{\vee}}(t,-)$, coinciding with $\mathcal{X}_{\beta^{\vee}}(t,-)$ on the sector made of two consecutive BPS rays containing $l$.
\end{itemize}
Since we can do this for any closed subsector $\Sigma \subset \mathbb{H}_l$, we can define an analytic continuation $\mathcal{X}_{l,\beta^{\vee}}(t,-):\mathbb{H}_{l}\to \mathbb{C}^{\times}$ of $\mathcal{X}_{\beta^{\vee}}(t,-)$. 
\end{dfn}

In order to try to relate $\mathcal{X}_{\beta^{\vee}}(t,\lambda)$ to the solution of the Riemann-Hilbert problem from \cite{BridgelandCon}, we would like to show that the functions $\mathcal{X}_{l,\beta^{\vee}}(t,\lambda)$  satisfy properties related to (RH1), (RH2) and (RH3) from Section \ref{RHprob}.\\

Assuming $\text{Im}(t)>0$, we consider the rays:

\begin{equation}
    \begin{split}
    l_{n}&:=\mathbb{R}_{+}\cdot 2iZ_{\beta +n \delta}=i\mathbb{R}_{+}\cdot(t+n)\subset \mathbb{C}^{\times}\,,\\
    l_{\infty}&:=\mathbb{R}_{+}\cdot 2iZ_{\delta}=i\mathbb{R}_{+}\subset \mathbb{C}^{\times}\,.
    \end{split}
\end{equation}
Then the BPS rays are precisely $\pm l_{n}$ for $n \in \mathbb{Z}$, and $\pm l_{\infty}$.

\begin{prop}\label{jumpsprop}Fix $t$ with $\text{Im}(t)>0$.  If $l$ is a ray between $l_{n}$ and $l_{n-1}$ and $l'$ a ray between $l_{n+1}$ and $l_{n}$ then we have
    
    \begin{equation}\label{jumps}
        \begin{split}
        \mathcal{X}_{-l,\beta^{\vee}}(t,\lambda)
        &=\mathcal{X}_{-l',\beta^{\vee}}( t,\lambda)(1-e^{2\pi i( t+n)/\lambda})^{-1}, \;\;\;\; \text{for} \;\;\;\; \lambda\in\mathbb{H}_{-l}\cap \mathbb{H}_{-l'}\\
        \mathcal{X}_{l,\beta^{\vee}}(t,\lambda)
        &=\mathcal{X}_{l',\beta^{\vee}}( t,\lambda)(1-e^{-2\pi i( t+n)/\lambda}), \;\;\;\; \text{for} \;\;\;\;  \lambda\in  \mathbb{H}_l\cap \mathbb{H}_{l'}\,.\\
        \end{split}
    \end{equation}
    If $l$ lies between $l_0$ and $l_1$, while $l'$ lies between $-l_0$ and $-l_{-1}$ then on $\mathbb{H}_{l}\cap \mathbb{H}_{l'} $ we have 
    \begin{equation}\label{infprod1}
        \mathcal{X}_{l,\beta^{\vee}}(t,\lambda)
        =\mathcal{X}_{l',\beta^{\vee}}(t,\lambda)\prod_{n>0 }(1-e^{-2\pi i( t+n)/\lambda})\prod_{n>0} (1-e^{2\pi i( t-n)/\lambda})^{-1}\,.  \end{equation}
    Finally,  if $l$ lies between $l_0$ and $l_{-1}$, while $l'$ lies between $-l_0$ and $-l_{1}$ then on $\mathbb{H}_{l}\cap \mathbb{H}_{l'} $ we have:
    
    \begin{equation}\label{infprod2}
        \mathcal{X}_{l',\beta^{\vee}}(t,\lambda)
        =\mathcal{X}_{l,\beta^{\vee}}(t,\lambda)\prod_{n>0} (1-e^{-2\pi i (t-n)/\lambda})\prod_{n> 0}(1-e^{2\pi i( t+n)/\lambda})^{-1} \,. \end{equation}

\end{prop}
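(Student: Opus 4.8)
The plan is to reduce all four identities to the jump behaviour of the individual integral terms making up $\log\mathcal{X}_{\beta^{\vee}}^{\mathrm{inst}}$, and to compute those jumps by deforming the integration contour past the poles of the kernel. First I would note that, for fixed $t$, the semiflat factor $\exp(2\pi i Z_{\beta^{\vee}}(t)/\lambda)$ is holomorphic and single valued in $\lambda\in\mathbb{C}^{\times}$, so it drops out of every ratio in the statement and all $\lambda$-jumps come from $\mathcal{X}_{\beta^{\vee}}^{\mathrm{inst}}$. Writing $\log\mathcal{X}_{\beta^{\vee}}^{\mathrm{inst}}$ as the (paired) sum of the terms
\[
I_n(\lambda)=-\frac{\lambda}{\pi i}\int_{i\mathbb{R}_{-}(t+n)}\frac{d\lambda'}{(\lambda')^2-\lambda^2}\log\!\big(1-e^{2\pi i(t+n)/\lambda'}\big),
\]
I would use that the kernel $\lambda/((\lambda')^2-\lambda^2)$ has simple poles at $\lambda'=\pm\lambda$ with residues $\pm\tfrac12$. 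Thus each $I_n$ is holomorphic in $\lambda$ off the two rays $\pm l_n$, and as $\lambda$ crosses $l_n$ (resp.\ $-l_n$) the pole $\lambda'=-\lambda$ (resp.\ $\lambda'=\lambda$) crosses the contour $i\mathbb{R}_{-}(t+n)$, producing a residue jump. A direct evaluation gives jump $\pm\log(1-e^{-2\pi i(t+n)/\lambda})$ across $l_n$ and $\mp\log(1-e^{2\pi i(t+n)/\lambda})$ across $-l_n$, the sign being fixed by the orientation of the crossing.

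This residue computation is the engine for the first two identities. Because $\mathrm{Im}(t)\neq 0$ forces the rays $\{\pm l_m\}_{m\in\mathbb{Z}}$ to be pairwise distinct, every term $I_m$ with $m\neq n$ is holomorphic across $l_n$, so on $\mathbb{H}_l\cap\mathbb{H}_{l'}$ the two continuations of Definition \ref{halfplanecont} are continuations of the same germ that differ only through the single term $I_n$. Exponentiating the jump of $I_n$ across $l_n$ yields $\mathcal{X}_{l,\beta^{\vee}}=\mathcal{X}_{l',\beta^{\vee}}(1-e^{-2\pi i(t+n)/\lambda})$, and the crossing of $-l_n$ gives the factor $(1-e^{2\pi i(t+n)/\lambda})^{-1}$ of the second identity. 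The only delicate point here is that the full series is merely conditionally convergent; but since the jump modifies a single summand inside an otherwise convergent paired series, this causes no trouble.

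For the product identities \eqref{infprod1} and \eqref{infprod2} I would first fix the cyclic order of the BPS rays using $\mathrm{Im}(t)>0$: as $n$ ranges over $\mathbb{Z}$ the rays $l_n$ sweep the left half-plane and $-l_n$ the right half-plane, with the subfamilies $\{l_n\}_{n\geq 1}$ and $\{-l_{-n}\}_{n\geq 1}$ both accumulating at $l_\infty$, and their mirrors $\{l_{-n}\}_{n\geq1}$, $\{-l_n\}_{n\geq1}$ accumulating at $-l_\infty$. Consequently passing from the sector $l'$ to the sector $l$ across $l_\infty$ crosses exactly $\{l_n\}_{n\geq1}$ and $\{-l_{-n}\}_{n\geq1}$, and composing the single-ray jumps already established produces precisely $\prod_{n>0}(1-e^{-2\pi i(t+n)/\lambda})\prod_{n>0}(1-e^{2\pi i(t-n)/\lambda})^{-1}$; identity \eqref{infprod2} is the mirror statement obtained by passing through $-l_\infty$.

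The main obstacle is making this last step rigorous, namely justifying that the analytic continuation across the accumulation ray $l_\infty$ equals the convergent infinite product of the individual jumps. I would work in the overlap wedge $\mathbb{H}_l\cap\mathbb{H}_{l'}$, which surrounds $l_\infty$: there the exponentials $e^{-2\pi i(t+n)/\lambda}$ and $e^{2\pi i(t-n)/\lambda}$ tend to $0$ as $n\to+\infty$, so both products converge absolutely and locally uniformly. Describing $\mathcal{X}_{l,\beta^{\vee}}$ and $\mathcal{X}_{l',\beta^{\vee}}$ through the contour-deformation prescription of Definition \ref{halfplanecont}, the difference of their logarithms is literally the sum of the residue contributions of the crossed rays; the tail estimates from the proof of Proposition \ref{coordconv} (the bound $|\mu(z)|=\mathcal{O}(|z|^{-1})$ together with the $\mathcal{O}(\lambda^3/n^3)$ control of the paired terms) then guarantee that this sum converges, may be reordered into the stated ordered product, and that the conditional $\pm n$ pairing survives passage to the limit. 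Exponentiating gives the two product formulas.
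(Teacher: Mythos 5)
Your proposal is correct and follows essentially the same route as the paper: the jumps are computed as residues of the kernel $\lambda/((\lambda')^2-\lambda^2)$ at $\lambda'=\pm\lambda$ picked up when the contour-deformed continuations of Definition \ref{halfplanecont} are compared (the paper phrases this as a small closed contour integral around $\lambda$, after using the reflection identity \eqref{changeint} to place the contour on $l_n$), with a single crossed ray for adjacent sectors and the infinite families $\{l_n\}_{n>0}$, $\{-l_{-n}\}_{n>0}$ accumulating at $l_\infty$ for the product identities. Your convergence argument for the products — exponential decay of $e^{-2\pi i(t+n)/\lambda}$ and $e^{2\pi i(t-n)/\lambda}$ on the overlap wedge $\mathbb{H}_l\cap\mathbb{H}_{l'}$ — is exactly the one used in the paper.
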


\begin{rem} Similar formulas follow for the case $\text{Im}(t)<0$, but we will only need the case $\text{Im}(t)>0$.

\end{rem}

\begin{proof}  Let us first show the first statement. From the definition \ref{halfplanecont} of the analytic continuations $\mathcal{X}_{l,\beta^{\vee}}$, we can compare $\mathcal{X}_{-l,\beta^{\vee}}/\mathcal{X}_{-l',\beta^{\vee}}$ for $\lambda\in\mathbb{H}_{-l}\cap \mathbb{H}_{-l'}$ by computing the contour

\begin{equation}
    \log(\mathcal{X}_{-l,\beta^{\vee}}(t,\lambda)/\mathcal{X}_{-l',\beta^{\vee}}(t,\lambda))=-\frac{\lambda}{\pi i}\oint \frac{d\lambda'}{(\lambda')^2-(\lambda)^2}\log(1-e^{2\pi i(t+n)/\lambda'}) \,,
\end{equation}
where we integrate along a small (counter-clockwise) contour around $\lambda$.\\

On the other hand,

\begin{equation}
-\frac{\lambda}{\pi i}\oint \frac{d\lambda'}{(\lambda')^2-(\lambda)^2}\log(1-e^{2\pi i(t+n)/\lambda'})=-\log (1-e^{2\pi i(t+n)/\lambda})\,,
\end{equation}
so 
    \begin{equation}
        \mathcal{X}_{-l,\beta^{\vee}}(t,\lambda)
        =\mathcal{X}_{-l',\beta^{\vee}}( t,\lambda)(1-e^{2\pi i( t+n)/\lambda})^{-1}\,.
    \end{equation}
    
Similarly, since 

\begin{equation}\label{changeint}
    -\frac{\lambda}{\pi i} \int_{i\mathbb{R}_{-}(t+n)}\frac{d\lambda'}{(\lambda')^2-(\lambda)^2}\log (1- e^{2\pi i(t+n)/\lambda'})=\frac{\lambda}{\pi i}\int_{i\mathbb{R}_{-}(-t-n)}\frac{d\lambda'}{(\lambda')^2-(\lambda)^2}\log(1-e^{2\pi i(-t-n)/\lambda'})\,,
\end{equation}
where $i\mathbb{R}_{-}(-t-n)=l_n$, then for $\lambda\in  \mathbb{H}_l\cap \mathbb{H}_{l'}$
\begin{equation}
    \log(\mathcal{X}_{l,\beta^{\vee}}(t,\lambda)/\mathcal{X}_{l',\beta^{\vee}}(t,\lambda))=\frac{\lambda}{\pi i}\oint \frac{d\lambda'}{(\lambda')^2-(\lambda)^2}\log(1-e^{2\pi i(-t-n)/\lambda'})=\log(1-e^{-2\pi i(t+n)/\lambda})\,, 
\end{equation}
which implies that 
\begin{equation}
        \mathcal{X}_{l,\beta^{\vee}}(t,\lambda)
        =\mathcal{X}_{l',\beta^{\vee}}( t,\lambda)(1-e^{-2\pi i(t+n)/\lambda})\,.
\end{equation}

Now assume that the ray $l$ lies between $l_0$ and $l_1$, while $l'$ lies between $-l_0$ and $-l_{-1}$. Then to relate $\mathcal{X}_{l,\beta^{\vee}}(t,\lambda)$ and $\mathcal{X}_{l',\beta^{\vee}}(t,\lambda)$ one needs to compute and infinite amount of residues corresponding to (after using \eqref{changeint}) the contour integrals along $l_n$ and $-l_{-n}$ for $n>0$, giving for $\lambda\in \mathbb{H}_l\cap \mathbb{H}_{l'}$

\begin{equation}
    \begin{split}
    \log(\mathcal{X}_{l,\beta^{\vee}}(t,\lambda)/\mathcal{X}_{l',\beta^{\vee}}(t,\lambda))=\sum_{n>0}&\bigg(\frac{\lambda}{\pi i}\oint \frac{d\lambda'}{(\lambda')^2-(\lambda)^2}\log(1-e^{-2\pi i(t+n)/\lambda'})\\
    &-\frac{\lambda}{\pi i}\oint \frac{d\lambda'}{(\lambda')^2-(\lambda)^2}\log(1-e^{2\pi i(t-n)/\lambda'})\bigg)\\
    =\sum_{n>0}&\bigg(\log(1-e^{-2\pi i(t+n)/\lambda})-\log (1-e^{2\pi i(t-n)/\lambda})\bigg)
    \end{split}
\end{equation}
We remark that because $\lambda \in \mathbb{H}_l\cap \mathbb{H}_{l'}$, we have $\text{Re}(-2\pi i(t+n)/\lambda)<0$ and $\text{Re}(2\pi i(t-n)/\lambda)<0$. Indeed for $\lambda\in \mathbb{H}_{l_n}$ we have $\text{Re}(-2\pi i(t+n)/\lambda)<0$ and for $\lambda \in \mathbb{H}_{-l_{-n}}$ we have $\text{Re}(2\pi i(t-n)/\lambda)<0$, while $\mathbb{H}_l\cap \mathbb{H}_{l'}$ intersects $\mathbb{H}_{l_n}$ and $\mathbb{H}_{-l_{-n}}$ for all $n>0$. Hence, each term in the above infinite sum exponentially decays for $\lambda\in \mathbb{H}_l\cap \mathbb{H}_{l'}$ as $n\to \infty$, and hence the sum converges. In particular, we find that

 \begin{equation}
        \mathcal{X}_{l,\beta^{\vee}}(t,\lambda)
        =\mathcal{X}_{l',\beta^{\vee}}(t,\lambda)\prod_{n>0 }(1-e^{-2\pi i( t+n)/\lambda})\prod_{n>0} (1-e^{2\pi i( t-n)/\lambda})^{-1}\,.  \end{equation}
A similar argument follows for (\ref{infprod2}).

\end{proof}

\begin{rem} The above proposition show that the analytic continuations $\mathcal{X}_{l,\beta^{\vee}})(t,-)$ satisfy (RH1) from Section \ref{RHprob}. 
\end{rem}

We now prove the following proposition regarding the asymptotics of the Riemann-Hilbert problem as $\lambda \to 0$, and related to (RH2) of Section \ref{RHprob}.  Notice that the result below is only shown along sectors determined by BPS rays.

\begin{prop}\label{asymptotic1} Take $t$ with $\text{Im}(t)\neq 0$ and consider a ray $l$ not equal to the any of the BPS rays $\{\pm i\mathbb{R}_{-}( t +n)\}_{n \in \mathbb{Z}}\cup i\mathbb{R}_{\pm}$. Let $S_l$ be any sector containing $l$ and no BPS rays. Then 

\begin{equation}
    \mathcal{X}_{\beta^{\vee}}(t,\lambda)e^{-2\pi i Z_{\beta^{\vee}}/\lambda}\to 1 \;\;\;\; \text{as} \;\;\;\; \lambda \to 0, \;\; \lambda \in S_{l}\,.   
\end{equation}

\end{prop}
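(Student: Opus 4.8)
The plan is to reduce the claim to showing that $\log \mathcal{X}_{\beta^\vee}^{\text{inst}}(t,\lambda) \to 0$ as $\lambda \to 0$ within $S_l$, since in the conformal limit $\mathcal{X}_{\beta^\vee}^{\text{sf}}(t,\lambda) = e^{2\pi i Z_{\beta^\vee}/\lambda}$ and hence $\mathcal{X}_{\beta^\vee}(t,\lambda)\, e^{-2\pi i Z_{\beta^\vee}/\lambda} = \mathcal{X}_{\beta^\vee}^{\text{inst}}(t,\lambda)$. Shrinking $S_l$ to a closed subsector if necessary, I may assume $S_l$ contains no BPS ray and in particular avoids $i\mathbb{R}_\pm$; this keeps $1/\lambda$ in an angular sector bounded away from the imaginary axis, which will be essential for the uniform estimates below. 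Using Proposition \ref{coordconv}, I would split $\log \mathcal{X}_{\beta^\vee}^{\text{inst}}(t,\lambda)$ into a finite head consisting of the $|n| \le M$ terms, each expressed through Binet's function $\mu$ via Lemma \ref{intmu}, and a convergent tail $\sum_{n>M}$ of the paired form appearing in Proposition \ref{coordconv}.

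For the head, the argument $(t+n)/\lambda$ of each $\mu$ tends to $\infty$ as $\lambda \to 0$, so the bound $|\mu(z)| = \mathcal{O}(|z|^{-1})$ recorded in the proof of Proposition \ref{coordconv} shows that each of the finitely many terms is $\mathcal{O}(|\lambda|)$ and hence vanishes in the limit. This part is routine once the sign of $\mathrm{Re}((t+n)/\lambda)$ is fixed, which holds throughout $S_l$ because $S_l$ crosses none of the lines $\pm i\mathbb{R}(t+n)$.

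For the tail, I would invoke the asymptotic expansion of $\mu$ together with the cancellation already used in Proposition \ref{coordconv}, namely $\tfrac{\lambda B_2}{2(t+n)} - \tfrac{\lambda B_2}{2(-t+n)} = \tfrac{\lambda t B_2}{t^2 - n^2}$, so that the $n$-th paired summand equals $\tfrac{\lambda t B_2}{t^2-n^2} + \mathcal{O}(|\lambda|^3/n^3)$. Summing over $n>M$ then bounds the tail by $C|\lambda|\sum_{n>M} |t^2-n^2|^{-1} + C'|\lambda|^3\sum_{n>M} n^{-3}$, which is $\mathcal{O}(|\lambda|)$ and therefore tends to $0$ as well.

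The main obstacle is making the tail estimate genuinely uniform as $\lambda \to 0$: the asymptotic expansion of $\mu(z)$ carries error constants that degenerate as $\arg z \to \pm\pi/2$, so I must check that for all large $n$ the arguments $(t+n)/\lambda$ and $-(t-n)/\lambda$ remain inside a single fixed subsector $|\arg z| \le \tfrac{\pi}{2} - \epsilon$. For large $n$ both arguments are asymptotic to $n/\lambda$, whose direction is $-\arg\lambda$; since $S_l$ avoids $i\mathbb{R}_\pm$, this direction is bounded away from $\pm\tfrac{\pi}{2}$, which furnishes a uniform $\epsilon$ and hence one constant controlling the entire tail. Once this uniformity is secured, combining the head and tail estimates gives $\log \mathcal{X}_{\beta^\vee}^{\text{inst}}(t,\lambda) = \mathcal{O}(|\lambda|) \to 0$, completing the argument.
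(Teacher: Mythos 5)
Your proposal is correct and follows essentially the same route as the paper's proof: the same head/tail decomposition at a threshold $M$, the same reduction to Binet's function $\mu$ via Lemma \ref{intmu}, and the same cancellation of the $B_2/2z$ terms in the paired tail summands. The only cosmetic difference is that you sum the explicit $\mathcal{O}(|\lambda|/n^2)$ bound directly to get an overall $\mathcal{O}(|\lambda|)$, whereas the paper obtains a uniform $\mathcal{O}(n^{-2})$ bound and invokes dominated convergence; your extra remark on keeping the arguments of $\mu$ in a fixed subsector $|\arg z|\le \tfrac{\pi}{2}-\epsilon$ is a legitimate point that the paper leaves implicit.
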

\begin{proof}
As before, we denote $a_n(t,\lambda)=\text{Re}((t+n)/\lambda)$. Notice that for any $\lambda \in S_l$ we must have $a_n(t,\lambda)\neq 0$ for all $n \in \mathbb{Z}$ (otherwise $S_l$ would contain a BPS ray). On the other hand, since $S_l$ is connected and $\lambda \to a_n(t,\lambda)$ continuous, we have either $a_n(t,\lambda)>0$ or $a_n(t,\lambda)<0$ for all $\lambda \in S_l$. Furthermore, by picking $M>0$ sufficiently big, we have that for $n>M$ either $a_{n}>0$ and $a_{-n}<0$,  or $a_{n}<0$ and $a_{-n}>0$. We assume the first case, since the second is analogous. We can then write for $\lambda \in S_l$

\begin{equation}
    \begin{split}
    \log(\mathcal{X}_{\beta^{\vee}}(t,\lambda)e^{-2\pi i Z_{\beta^{\vee}}/\lambda})&=
    \sum_{a_n>0, |n|\leq M}\mu\Big(\frac{t+n}{\lambda}\Big)-\sum_{a_n<0,|n|\leq M}\mu\Big(-\frac{t+n}{\lambda}\Big)\\
    &+ \sum_{n>M}\Big(\mu\Big(\frac{t+n}{\lambda}\Big)-\mu\Big(-\frac{t-n}{\lambda}\Big)\Big)\,.\\
    \end{split}
\end{equation}

To deal with the terms in the finite sums, we use that
\begin{equation}
    \mu(z)= \frac{B_2}{2z} + \mathcal{O}(|z|^{-1}) \;\;\;\; \text{as} \;\;\;\; z\to \infty \;\;\;\; \text{with} \;\;\;\; \text{Re}(z)>0\,,
\end{equation}
where $z$ is either $z=(t+n)/\lambda$ for the terms with $a_n>0$ or $z=-(t+n)/\lambda$ for the terms with $a_n<0$. This shows that 
\begin{equation}
    \lim_{\lambda \to 0, \lambda \in S_l} \sum_{a_n>0, |n|\leq M}\mu\Big(\frac{t+n}{\lambda}\Big)-\sum_{a_n<0,|n|\leq M}\mu\Big(-\frac{t+n}{\lambda}\Big)=0\,.
\end{equation}

We now deal with the infinite sums. We use again the fact that for $\text{Re}(z)>0$, we can write

\begin{equation}
    \mu(z)=\sum_{m=1}^n\frac{B_{2m}}{(2m-1)2mz^{2m-1}}+ \mathcal{O}(|z|^{1-2n})\,.
\end{equation}

From this and equation (\ref{diff}), it follows that if $U_0\subset \mathbb{C}$ denotes a small neighborhood of $0$, we have that for all $n>M$ and $\lambda \in S_{l}\cap U_0$, the following uniform estimate in $\lambda$ holds:

\begin{equation}
        \Big|\mu\Big(\frac{t+n}{\lambda}\Big)-\mu\Big(-\frac{t-n}{\lambda}\Big)\Big|=\mathcal{O}\Big(\Big| \frac{\lambda}{n^2}\Big|\Big)=\mathcal{O}(n^{-2})\,.
\end{equation}

It follows that we can apply the dominated convergence theorem to interchange limits and infinite sums, and conclude that

\begin{equation}
    \lim_{\lambda \to 0, \lambda \in S_l}\sum_{n>M}\Big(\mu\Big(\frac{t+n}{\lambda}\Big)-\mu\Big(-\frac{t-n}{\lambda}\Big)\Big)=\sum_{n>M}\lim_{\lambda \to 0, \lambda \in S_l}\Big(\mu\Big(\frac{t+n}{\lambda}\Big)-\mu\Big(-\frac{t-n}{\lambda}\Big)\Big)=0\,.
\end{equation}

Hence, putting all together one finds

\begin{equation}
    \lim_{\lambda \to 0, \lambda \in S_l}\log(\mathcal{X}_{\beta^{\vee}}(t,\lambda)e^{-2\pi iZ_{\beta^{\vee}}/\lambda})=0\,.
\end{equation}

\end{proof}

Finally, we deal with a property related to (RH3) of Section \ref{RHprob}.  As with Proposition \ref{asymptotic1}, the Proposition below is only shown for sectors determined by BPS rays.

\begin{prop}\label{asymptotic2} With the same hypotheses as in Proposition \ref{asymptotic1}, there is $k>0$ such that

\begin{equation}\label{estimate3}
    |\lambda|^{-k}<|\mathcal{X}_{\beta^{\vee}}(t,\lambda)|<|\lambda|^k \;\;\;\;\; \text{as} \;\;\;\; \lambda \to \infty, \;\;\; \lambda \in S_l \,.
\end{equation}
\end{prop}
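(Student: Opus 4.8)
The plan is to reduce the two-sided bound \eqref{estimate3} to an estimate of the form $|\log|\mathcal{X}_{\beta^{\vee}}(t,\lambda)||\le C\log|\lambda|$ valid for $\lambda\in S_l$ with $|\lambda|$ large. Writing $\log\mathcal{X}_{\beta^{\vee}}(t,\lambda)=2\pi i Z_{\beta^{\vee}}/\lambda+\log\mathcal{X}_{\beta^{\vee}}^{\text{inst}}(t,\lambda)$ and noting that the first summand is $O(1/|\lambda|)$, it suffices to show $\log\mathcal{X}_{\beta^{\vee}}^{\text{inst}}(t,\lambda)=O(\log|\lambda|)$ as $\lambda\to\infty$ in $S_l$; taking real parts then yields \eqref{estimate3} for any $k$ exceeding the implied constant. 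First I would use that $S_l$ contains no BPS ray, so that the signs $a_n(t,\lambda)=\text{Re}((t+n)/\lambda)$ are constant on $S_l$ and, moreover, the arguments of the points $(t+n)/\lambda$ stay in a fixed subsector $\{|\arg z|\le\pi/2-\epsilon\}$ uniformly in $n$ and in $\lambda\in S_l$; the accumulation of the rays $\pm l_n$ at $\pm l_{\infty}=i\mathbb{R}_{\pm}$, which is itself excluded, guarantees such a uniform angular gap. As in the proof of Proposition \ref{asymptotic1}, fixing $M$ large and assuming the pattern $a_n>0$, $a_{-n}<0$ for $n>M$ (the other case being analogous), Lemma \ref{intmu} lets me write
\begin{equation*}
\log\mathcal{X}_{\beta^{\vee}}^{\text{inst}}(t,\lambda)=\sum_{a_n>0,\,|n|\le M}\mu\Big(\tfrac{t+n}{\lambda}\Big)-\sum_{a_n<0,\,|n|\le M}\mu\Big(-\tfrac{t+n}{\lambda}\Big)+\sum_{n>M}\Big(\mu\Big(\tfrac{t+n}{\lambda}\Big)-\mu\Big(-\tfrac{t-n}{\lambda}\Big)\Big).
\end{equation*}

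For the finite sum I would invoke the behaviour of Binet's function near the origin, namely $\mu(z)=-\tfrac12\log z-\tfrac12\log(2\pi)+o(1)$ as $z\to0$ with $|\arg z|\le\pi/2-\epsilon$, which follows from $\log\Gamma(z)=-\log z+O(z)$ together with the definition \eqref{Binet} of $\mu$. Since each argument $(t+n)/\lambda\to0$ with modulus $|t+n|/|\lambda|$, every term is bounded by $\tfrac12\log|\lambda|+O(1)$, and there are at most $2M+1$ of them; thus the finite part is $O(\log|\lambda|)$.

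The main work, and the main obstacle, is the tail $\sum_{n>M}g_n(\lambda)$ with $g_n(\lambda):=\mu((t+n)/\lambda)-\mu((n-t)/\lambda)$, because the individual terms cannot be bounded by anything summable times $O(\log|\lambda|)$: a term-by-term estimate produces a spurious factor of $|\lambda|$. The cancellation must be extracted from the difference itself, exploiting that the two arguments $u:=(t+n)/\lambda$ and $v:=(n-t)/\lambda$ differ by $u-v=2t/\lambda$, which is small. I would write $g_n(\lambda)=(u-v)\int_0^1\mu'(v+s(u-v))\,ds$ and use the two regimes of $\mu'$: near the origin $\mu'(z)=-\tfrac{1}{2z}-\log z+O(1)$, and at infinity $\mu'(z)=-\tfrac{B_2}{2z^2}+O(|z|^{-4})$, both uniform in $|\arg z|\le\pi/2-\epsilon$ (differentiating the Stirling series and the expansion near $0$). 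Splitting the tail at $n\sim 2|\lambda|$: for $M<n\lesssim|\lambda|$ the segment from $v$ to $u$ satisfies $|w|\ge(n-|t|)/|\lambda|$, so $|g_n|\lesssim\tfrac{|t|}{|\lambda|}\cdot\tfrac{|\lambda|}{n-|t|}=O(1/n)$ (the $\log$-term of $\mu'$ contributing likewise, since $|\log|w||=O(\log|\lambda|)$ over $\sim|\lambda|$ terms), whence $\sum_{M<n\lesssim|\lambda|}|g_n|=O(\log|\lambda|)$; for $n\gtrsim 2|\lambda|$ the segment stays bounded away from $0$, so $|\mu'|\lesssim|\lambda|^2/(n-|t|)^2$ and $|g_n|\lesssim|t|\,|\lambda|/(n-|t|)^2$, giving $\sum_{n\gtrsim|\lambda|}|g_n|=O(1)$.

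Combining the three pieces yields $\log\mathcal{X}_{\beta^{\vee}}^{\text{inst}}(t,\lambda)=O(\log|\lambda|)$ uniformly on $S_l$, which, with the observation about the $Z_{\beta^{\vee}}/\lambda$ term, completes the proof. The delicate points to be careful about are the uniformity of the $\mu$ and $\mu'$ asymptotics across the whole sector $S_l$ (ensured by the uniform angular gap away from the BPS rays) and the correct exploitation of the pairwise cancellation in the tail; everything else is routine bookkeeping with the two asymptotic regimes of Binet's function.
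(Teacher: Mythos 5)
Your proof is correct, and the decisive step---the tail estimate---is handled by a genuinely different mechanism than the paper's. You both start from the same decomposition via Lemma \ref{intmu} into a finite sum of $\mu$-terms (whose $-\tfrac12\log z$ behaviour near $z=0$ gives the $O(\log|\lambda|)$ contribution; the paper computes this more precisely as $C\lambda^{m/2}$) plus the tail $\sum_{n>M}\bigl(\mu((t+n)/\lambda)-\mu((n-t)/\lambda)\bigr)$. For the tail, the paper substitutes the Binet integral representation $\mu(z)=\tfrac12\int_0^\infty\tfrac{ds}{s}\bigl(\tfrac{1+e^{-s}}{1-e^{-s}}-\tfrac2s\bigr)e^{-zs}$, sums the geometric series under the integral via Fubini--Tonelli (which forces the case split on the sign of $\mathrm{Re}(1/\lambda)$ and the use of the reflection identity $\mathcal{X}_{\beta^{\vee}}(t,\lambda)=\mathcal{X}_{\beta^{\vee}}^{-1}(t,-\lambda)$), and then bounds the resulting single integral by exponential integrals $E_1$. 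You instead extract the cancellation directly from the difference $\mu(u)-\mu(v)$ with $u-v=2t/\lambda$ via the fundamental theorem of calculus, splitting the sum at $n\sim|\lambda|$ and using the two asymptotic regimes of $\mu'$. Your route is more elementary and self-contained, avoids the sign case split and the reflection identity, and yields the upper and lower bounds of \eqref{estimate3} simultaneously from $|\mathrm{Re}\log\mathcal{X}_{\beta^{\vee}}|=O(\log|\lambda|)$; the paper's route produces a closed-form single-integral expression for the summed tail, which is more explicit and would lend itself to sharper asymptotics. You are also right, and more explicit than the paper, that the whole argument hinges on a \emph{uniform} angular gap between $S_l$ and the closure of the set of BPS rays (including the accumulation rays $\pm i\mathbb{R}$), which guarantees that all the points $\pm(t+n)/\lambda$ and the segments joining them stay in a fixed subsector $|\arg z|\le\pi/2-\epsilon$ where the expansions of $\mu$ and $\mu'$ are uniform. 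The only bookkeeping point to tidy up is the intermediate range $|\lambda|\lesssim n\lesssim 2|\lambda|$, where the segment has modulus of order $1$; there $\mu'$ is simply bounded on the relevant compact subsector, so those $O(|\lambda|)$ terms each of size $O(1/|\lambda|)$ contribute $O(1)$, and the argument closes.
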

\begin{proof} 

We assume as before that we pick $M>0$ such that for $|n|>M$, the $a_n=\text{Re}((t+n)/\lambda)$ have a definite sign. We assume for definiteness that $a_n>0$ for $n>M$ (and hence $a_{-n}<0$), with the other case being analogous. We can then write as before

\begin{equation}
    \begin{split}
    \log(\mathcal{X}_{\beta^{\vee}}(t,\lambda))&=2\pi iZ_{\beta^{\vee}}(t)/\lambda +\sum_{a_n>0, |n|\leq M}\mu\Big(\frac{t+n}{\lambda}\Big)-\sum_{a_n<0,|n|\leq M}\mu\Big(-\frac{t+n}{\lambda}\Big)\\
    &+ \sum_{n>M}\Big(\mu\Big(\frac{t+n}{\lambda}\Big)-\mu\Big(-\frac{t-n}{\lambda}\Big)\Big)\,.\\
    \end{split}
\end{equation}
On one hand, expanding $\mu(z)$ as $z \to 0$ we find that as $\lambda \to \infty$ with $\lambda \in S_l$ we get the following depending on whether $a_n>0$ or $a_n<0$, respectively:

\begin{equation}
    \begin{split}
    \mu\Big(\frac{t+n}{\lambda}\Big)&= -\frac{1}{2}\log\Big(2\pi\frac{t+n}{\lambda}\Big) + \Big(1-\gamma -\log\Big(\frac{t+n}{\lambda }\Big)\Big)\frac{t+n}{\lambda }+ \mathcal{O}(\Big(\frac{t+n}{\lambda}\Big)^2)\,,\\
    -\mu\Big(-\frac{t+n}{\lambda}\Big)&= \frac{1}{2}\log\Big(-2\pi\frac{t+n}{\lambda}\Big)-(1-\gamma-\log\Big(-\frac{t+n}{\lambda }\Big)\Big)\Big(-\frac{t+n}{\lambda }\Big)+ \mathcal{O}(\Big(-\frac{t+n}{\lambda}\Big)^2)\,,
    \end{split}
\end{equation}
where $\gamma$ is the Euler-Mascheroni constant. This implies that as $\lambda \to \infty$ with $\lambda \in S_l$
\begin{equation}\label{estimate2}
    \begin{split}
    \exp\Big( \sum_{a_n>0, |n|\leq M}\mu\Big(\frac{t+n}{\lambda}\Big)-\sum_{a_n<0,|n|\leq M}\mu\Big(-\frac{t+n}{\lambda}\Big)\Big)\sim C(t,M)\lambda^{m/2}\,,
    \end{split}
\end{equation}
where $m:=\#\{-M<n<M: a_n>0\}-\#\{-M<n<M: a_n<0\}$ and $C(t,M)$ is a factor only depending on $t$ and $M$.\\

We now show that 

\begin{equation}
    \sum_{n>M}\Big(\mu\Big(\frac{t+n}{\lambda}\Big)-\mu\Big(-\frac{t-n}{\lambda}\Big)\Big)= \mathcal{O}(\log|\lambda|) \;\;\;\; \text{as} \;\;\;\; \lambda \to \infty, \;\;\; \lambda \in S_l\,.
\end{equation}

 To do this, we will need the following Binet integral representation 
\begin{equation}
    \mu(z)=\frac{1}{2}\int_{0}^{\infty}\frac{ds}{s}\Big(\frac{1+e^{-s}}{1-e^{-s}} -\frac{2}{s}\Big)e^{-zs} \;\;\;\;\; \text{for} \;\;\;\; \text{Re}(z)>0\,.
\end{equation}
We then have

\begin{equation}\label{FT}
    \begin{split}
        \sum_{n>M}\mu\Big(\frac{t+n}{\lambda}\Big)-\mu\Big(-\frac{t-n}{\lambda}\Big)&=\sum_{n>M}\frac{1}{2}\int_{0}^{\infty}\frac{ds}{s}\Big(\frac{1+e^{-s}}{1-e^{-s}} -\frac{2}{s}\Big)(e^{-\frac{t+n}{\lambda}s}-e^{\frac{t-n}{\lambda}s})\\
        &=\sum_{n>M}\int_{0}^{\infty}\frac{ds}{s}\Big(\frac{1+e^{-s}}{1-e^{-s}} -\frac{2}{s}\Big)\sinh\Big(-\frac{ts}{\lambda}\Big)e^{-\frac{ns}{\lambda}}\,.
    \end{split}
\end{equation}

Letting $b(\lambda):=\text{Re}(1/\lambda)$, we will divide the above problem into the cases $b(\lambda)>0$ and $b(\lambda)<0$ (the case $b(\lambda)=0$ does not occur because $\lambda$ does not lie in the rays $i\mathbb{R}_{\pm}$). Notice that if we show (\ref{estimate3}) for $b(\lambda)>0$, we automatically have (\ref{estimate3}) for the case $b(\lambda)<0$, due to the easy to check identity $\mathcal{X}_{\beta^{\vee}}(t,\lambda)=\mathcal{X}_{\beta^{\vee}}^{-1}(t,-\lambda)$. We then restrict to the case $b(\lambda)>0$.\\

We would like to apply the Fubini-Tonelli theorem to interchange sums and integrals in (\ref{FT}). Since $b(\lambda)>0$, then it is easy to check that (provided $M$ is big enough)

\begin{equation}
    \begin{split}
    \int_0^{\infty}&\sum_{n>M}\frac{ds}{s}\Big|\frac{1+e^{-s}}{1-e^{-s}}-\frac{2}{s}\Big|\Big|\sinh\Big(-\frac{ts}{\lambda}\Big)\Big|e^{-nb(\lambda)s}\\
    &= \int_0^{\infty}\frac{ds}{s}\Big|\frac{1+e^{-s}}{1-e^{-s}}-\frac{2}{s}\Big|\Big|\sinh\Big(-\frac{ts}{\lambda}\Big)\Big|\frac{e^{-(M+1)b(\lambda)s}}{1-e^{-b(\lambda)s}}<\infty \,.
    \end{split}
\end{equation}
By the Fubini-Tonelli theorem, we can then interchange sums and integrals in (\ref{FT}) and obtain
\begin{equation}
\begin{split}
\sum_{n>M}\mu\Big(\frac{t+n}{\lambda}\Big)-\mu\Big(-\frac{t-n}{\lambda}\Big)
    &=\int_{0}^{\infty}\frac{ds}{s}\Big(\frac{1+e^{-s}}{1-e^{-s}} -\frac{2}{s}\Big)\sinh\Big(-\frac{ts}{\lambda}\Big)\sum_{n>M}e^{-\frac{ns}{\lambda}}\\
    &=\int_{0}^{\infty}\frac{ds}{s}\Big(\frac{1+e^{-s}}{1-e^{-s}} -\frac{2}{s}\Big)\sinh\Big(-\frac{ts}{\lambda}\Big)\frac{e^{-\frac{(M+1)s}{\lambda}}}{1-e^{-s/\lambda}}\,.
\end{split}
\end{equation}

We now bound the integral from $1$ to $\infty$ and from $0$ to $1$ separately.  Recalling the notation $a_n(t,\lambda)=\text{Re}((t+n)/\lambda)$, we have

\begin{equation}\label{estimate4}
    \begin{split}
    \Big|\int_{1}^{\infty}&\frac{ds}{s}\Big(\frac{1+e^{-s}}{1-e^{-s}} -\frac{2}{s}\Big)\sinh\Big(-\frac{ts}{\lambda}\Big)\frac{e^{-\frac{(M+1)s}{\lambda}}}{1-e^{-s/\lambda}}\Big| \\
    &\leq  C\int_1^{\infty}\frac{ds}{s}|\sinh\Big(\frac{ts}{\lambda}\Big)e^{\frac{-(M+1)s}{\lambda}}| \leq \frac{C}{2}\int_1^{\infty} \frac{ds}{s}e^{-a_{M+1}(t,\lambda)s}+\frac{C}{2}\int_1^{\infty} \frac{ds}{s}e^{-a_{M+1}(-t,\lambda)s}\,.\\
    \end{split}
\end{equation}
Now notice that $a_{M+1}(t,\lambda)>0$ by our initial assumption, while we can ensure that $a_{M+1}(-t,\lambda)>0$ by possibly inscreasing the value of $M$. Hence, the last two integrals in (\ref{estimate4}) can be written in terms of the exponential integral function $E_1(z)$. Using that $E_1(x)<e^{-x}\log(1+1/x)<\log(1+1/x)$ for $x>0$ we then find
\begin{equation}
    \begin{split}
        \Big|\int_{1}^{\infty}&\frac{ds}{s}\Big(\frac{1+e^{-s}}{1-e^{-s}} -\frac{2}{s}\Big)\sinh\Big(-\frac{ts}{\lambda}\Big)\frac{e^{-\frac{(M+1)s}{\lambda}}}{1-e^{-s/\lambda}}\Big|\\
        & \leq \frac{C}{2}\Big(E_1(a_{M+1}(t,\lambda)) + E_1(a_{M+1}(-t,\lambda))\Big )\\
    &\leq \frac{C}{2}\Big(\log\Big(1+\frac{1}{a_{M+1}(t,\lambda)}\Big) + \log\Big(1+\frac{1}{a_{M+1}(-t,\lambda)}\Big)\Big)=\mathcal{O}(\log(|\lambda|))\,.
    \end{split}
\end{equation}
Finally, notice that as $\lambda \to \infty$ with $\lambda \in S_l$ we have 

\begin{equation}
    \Big|\int_{0}^{1}\frac{ds}{s}\Big(\frac{1+e^{-s}}{1-e^{-s}} -\frac{2}{s}\Big)\sinh\Big(-\frac{ts}{\lambda}\Big)\frac{e^{-\frac{(M+1)s}{\lambda}}}{1-e^{-s/\lambda}}\Big|=\mathcal{O}(\lambda^0) \,.
\end{equation}

We conclude that in the case of $b(\lambda)>0$, we have 

\begin{equation}\label{estimate1}
    \sum_{n>M}\Big(\mu\Big(\frac{t+n}{\lambda}\Big)-\mu\Big(-\frac{t-n}{\lambda}\Big)\Big)= \mathcal{O}(\log|\lambda|) \;\;\;\; \text{as} \;\;\;\; \lambda \to \infty, \;\;\; \lambda \in S_l \,,
\end{equation}
so taking $k_1>\text{max}\{m/2+1,0\}$, we find  $k_1>0$ such that
\begin{equation}
    |\mathcal{X}_{\beta^{\vee}}(t,\lambda)|<|\lambda|^{k_1} \;\;\;\;\; \text{as} \;\;\;\; \lambda \to \infty, \;\;\; \lambda \in S_l \,.
\end{equation}

Now we prove the reverse inequality. From (\ref{estimate1}) we find that for some constant $C>0$ 

\begin{equation}
    -C\log|\lambda|<\text{Re}\Big(\sum_{n>M}\Big(\mu\Big(\frac{t+n}{\lambda}\Big)-\mu\Big(-\frac{t-n}{\lambda}\Big)\Big)\Big)\,,
\end{equation}
which together with (\ref{estimate2}) allows us to conclude that for some $k_2>0$
\begin{equation}
    |\lambda|^{-k_2}<|\mathcal{X}_{\beta^{\vee}}(t,\lambda)| \;\;\;\;\; \text{as} \;\;\;\; \lambda \to \infty, \;\;\; \lambda \in S_l\,.
\end{equation}
Taking $k=\text{max}\{k_1,k_2\}$ one then obtains $k>0$ such that 

\begin{equation}
    |\lambda|^{-k}<|\mathcal{X}_{\beta^{\vee}}(t,\lambda)|<|\lambda|^k \;\;\;\;\; \text{as} \;\;\;\; \lambda \to \infty, \;\;\; \lambda \in S_l \,.
\end{equation}

As previously mentioned, the same inequalities follow for the case $b(\lambda)<0$ by using the identity $\mathcal{X}_{\beta^{\vee}}(t,\lambda)=\mathcal{X}_{\beta^{\vee}}^{-1}(t,-\lambda)$.

\end{proof}

 We remark that the asymptotic properties shown above for $\mathcal{X}_{\beta^{\vee}}(t,\lambda)$ are in principle not enough to conclude that the analytic continuations $\mathcal{X}_{l,\beta^{\vee}}(t,-):\mathbb{H}_l\to \mathbb{C}^{\times}$ match Bridgeland's solution $\Phi_{l,\beta^{\vee}}(t,1,\lambda)$ of the Riemann-Hilbert problem. The main issue being that the proof of uniqueness of solutions to the Riemann-Hilbert problem really does use that the asymptotic properties (RH2) and (RH3) hold on the half planes $\mathbb{H}_l$ (or at least on sectors centered at $l$ of fixed opening independent of $l$). On the other hand, our proofs of Propositions \ref{asymptotic1} and \ref{asymptotic2} don't extend easily to a proof for the analytic continuations $\mathcal{X}_{l,\beta^{\vee}}(t,\lambda)$, since the estimates of the Binet functions $\mu(z)$ that we use to represent $\mathcal{X}_{\beta^{\vee}}$ on a given sector really use that we stay on a sector determined by BPS rays (to guarantee that $\text{Re}(z)>0$). Nevertheless, the fact that (RH1) and weaker versions of (RH2) and (RH3) holds for $\mathcal{X}_{\beta^{\vee}}(t,-)$ allows us to conjecture the following possible relation to the solution of Bridgeland's Riemann-Hilbert problem:

\begin{dfn}\label{Bsol} Assume that $(v,w)\in \widetilde{M}$ satisfies $\text{Im}(v/w)>0$, and let $\{\Phi_{l,\beta^{\vee}}\}_l$ be the solution of \cite{BridgelandCon} to the Riemann-Hilbert problem of Section \ref{RHprob}. For $\lambda$ away from the BPS rays we define 
\begin{equation}
    \Phi_{\beta^{\vee}}(v,w,\lambda):=\Phi_{l,\beta^{\vee}}(v,w,\lambda) 
\end{equation}
for $l$ a ray in the sector defined by two consecutive BPS rays containing $\lambda$.

\end{dfn}

\begin{conj}\label{Theorem2} Fix $t \in \mathbb{C}^{\times}$ with $\text{Im}(t)> 0$. Let $\mathcal{X}_{\beta^{\vee}}(t,\lambda)$ be as before, and let $\Phi_{\beta^{\vee}}(v,w,\lambda)$ be as in Definition \ref{Bsol}. Then

\begin{equation}
    \mathcal{X}_{\beta^{\vee}}(t,\lambda)=\exp(2\pi i Z_{\beta^{\vee}}(t)/\lambda)\Phi_{\beta^{\vee}}(t,1,\lambda)\,.
\end{equation}
In particular, for $\lambda$ on the sector between $l_0=i\mathbb{R}_{+}t$ and $l_{-1}=i\mathbb{R}_{+}(t-1)$ the following holds:

\begin{equation}\label{Xqd}
    \mathcal{X}_{\beta^{\vee}}(t,\lambda)=H(t|1,-\lambda)e^{Q_H(t|1,-\lambda)+2\pi i Z_{\beta^{\vee}}(t)/\lambda} \,,
\end{equation}
where $H(t|\omega_1,\omega_2)$ is the quantum dilogarithm from before and 
\begin{equation}
    Q_H(t|\omega_1,\omega_2):=-\frac{\omega_1}{2\pi i\omega_2}\mathrm{Li}_2(e^{2\pi it/\omega_1})-\frac{1}{2}\log(1-e^{2\pi i t/\omega_1})+\frac{\pi}{12}\frac{\omega_2}{\omega_1} \,.
\end{equation}
\end{conj}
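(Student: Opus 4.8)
The plan is to realize the twisted conformal coordinate $\Psi_{\beta^{\vee}}(t,\lambda):=e^{-2\pi i Z_{\beta^{\vee}}(t)/\lambda}\,\mathcal{X}_{\beta^{\vee}}(t,\lambda)$, together with its half-plane analytic continuations $\Psi_{l,\beta^{\vee}}$ built as in Definition \ref{halfplanecont}, as a solution of the very Riemann--Hilbert problem of Section \ref{RHprob} at the point $(v,w)=(t,1)$, and then to invoke uniqueness of such solutions (\cite[Lemma 4.9]{BridgelandDT}) to identify it with Bridgeland's $\Phi_{\beta^{\vee}}(t,1,\lambda)$. The second displayed identity \eqref{Xqd} would then be immediate: it is Theorem \ref{quantumdilogRH} specialized to the sector between $l_0$ and $l_{-1}$, multiplied by the single-valued factor $e^{2\pi i Z_{\beta^{\vee}}/\lambda}$.

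First I would verify (RH1) for $\{\Psi_{l,\beta^{\vee}}\}$. Under the embedding $t\mapsto(t,1)$ one has $\widetilde{Z}_{\beta+n\delta}=2\pi i(t+n)$, so the Stokes factors in \eqref{jump1}--\eqref{jump4} coincide term-by-term with those produced in Proposition \ref{jumpsprop}; since $e^{-2\pi i Z_{\beta^{\vee}}/\lambda}$ is single-valued it contributes no jump, and the two families of Stokes data agree. Next, the conditions (RH2) and (RH3) are exactly the content of Propositions \ref{asymptotic1} and \ref{asymptotic2} — except that these are proved only on sub-sectors $S_l$ avoiding BPS rays, whereas the uniqueness argument requires them on the full half-planes $\mathbb{H}_l$. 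Since $\Phi_{l,\beta^{\vee}}(t,1,-)$ satisfies (RH2) and (RH3) on $\mathbb{H}_l$ by construction, the entire conjecture reduces to upgrading our sectorial estimates to half-plane estimates.

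The hard part — and the reason the statement remains a conjecture — is precisely this upgrade, obstructed by the accumulation of the BPS rays $\{l_n\}_{n\in\mathbb{Z}}$ at the two directions $\pm l_{\infty}=\pm i\mathbb{R}_{+}$. On a sub-sector between consecutive BPS rays the signs $a_n(t,\lambda)=\mathrm{Re}((t+n)/\lambda)$ are constant, which is exactly what licenses the Binet representation of $\log\mathcal{X}_{\beta^{\vee}}$ by integrals $\mu(\pm(t+n)/\lambda)$ (valid only when the argument has positive real part) and the uniform tail estimates. As $l$ approaches $l_{\infty}$, however, the half-plane $\mathbb{H}_l$ swallows infinitely many $l_n$, the signs $a_n$ change infinitely often, and no sub-sector of fixed opening survives; the term-by-term Binet estimate and the dominated-convergence step in Proposition \ref{asymptotic1} both break down. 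Controlling $\mathcal{X}_{l,\beta^{\vee}}$ uniformly as $\lambda\to0$ or $\lambda\to\infty$ inside $\mathbb{H}_l$ near these accumulation directions is the genuine analytic obstacle; one would presumably need an integral representation of $\mathcal{X}_{l,\beta^{\vee}}$ valid across the pile-up, e.g.\ by resumming the infinite products of \eqref{infprod1}--\eqref{infprod2} into a single factor of known half-plane growth.

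An alternative route that circumvents the half-plane asymptotics is to prove \eqref{Xqd} directly on the single sector between $l_0$ and $l_{-1}$, by matching the conditionally convergent Binet-sum representation of $\log\mathcal{X}_{\beta^{\vee}}$ from Proposition \ref{coordconv} against the representation of $H(t|1,-\lambda)$ obtained by writing $\sin_2$ and the Barnes $\Gamma_2$ as regularized products of ordinary $\Gamma$'s and applying \eqref{loggammaid} (with $Q_H$ absorbing the $B_{2,2}$ term and the $-\tfrac12\log(1-e^{2\pi it})$ correction). Granting such a one-sector special-function identity, the families $\{\Psi_{l,\beta^{\vee}}\}$ and $\{\Phi_{l,\beta^{\vee}}(t,1,-)\}$ share all Stokes factors by the first step, so their ratio $R$ is holomorphic and single-valued across every BPS ray $l_n$ lying in the open left half-plane $\{\mathrm{Re}\,\lambda<0\}$ (the only BPS rays there, accumulating only on the boundary $\pm i\mathbb{R}$); hence $R\equiv1$ on that half-plane by the identity theorem, and the symmetry $\mathcal{X}_{\beta^{\vee}}(t,\lambda)=\mathcal{X}_{\beta^{\vee}}(t,-\lambda)^{-1}$ used in Proposition \ref{asymptotic2}, paired with the matching symmetry of $\Phi_{\beta^{\vee}}$, propagates the identity to $\{\mathrm{Re}\,\lambda>0\}$. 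In this route the entire burden shifts onto the single-sector special-function identity and the verification of the symmetry of $\Phi_{\beta^{\vee}}$, so it is once again the behavior at the accumulation directions $\pm l_{\infty}$ that one must ultimately control.
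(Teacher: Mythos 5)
The first thing to note is that the statement you are addressing is a \emph{conjecture} in the paper, not a theorem: the authors do not prove it, and neither do you. What the paper actually establishes is exactly the evidence you describe in your first route --- (RH1) for the analytic continuations $\mathcal{X}_{l,\beta^{\vee}}$ (Proposition \ref{jumpsprop}) and versions of (RH2), (RH3) valid only on sectors $S_l$ containing no BPS rays (Propositions \ref{asymptotic1} and \ref{asymptotic2}) --- followed by the explicit admission that these sectorial asymptotics are too weak to feed into the uniqueness lemma \cite[Lemma 4.9]{BridgelandDT}, which requires (RH2)--(RH3) on full half-planes $\mathbb{H}_l$, or at least on sectors of opening bounded below independently of $l$. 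Your diagnosis of the obstruction --- the accumulation of the rays $l_n$ at $\pm i\mathbb{R}_{+}$, the consequent infinitely many sign changes of $a_n(t,\lambda)$ inside $\mathbb{H}_l$, and the breakdown of the term-by-term Binet representation and the dominated-convergence step --- is precisely the one the authors give in the paragraph following Proposition \ref{asymptotic2}. So your first route coincides with the paper's own (deliberately incomplete) line of argument, and you correctly refrain from claiming it closes the conjecture.

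Your second route is a genuine addition relative to what the paper carries out. Proving \eqref{Xqd} directly on the single sector between $l_0$ and $l_{-1}$, by matching the conditionally convergent sum of Binet functions from Proposition \ref{coordconv} against a regularized-product expansion of $H(t|1,-\lambda)$ in terms of ordinary Gamma functions, is essentially what the authors gesture at in Remark \ref{endremark}, where they cite Barnes' expression of the double gamma function as an infinite product of gamma functions as ``further support'' --- but they do not attempt it. If that one-sector identity were established, your propagation argument is sound in outline: the matching Stokes factors make the ratio of the two families holomorphic across each $l_n$ in the open left half-plane (where, for $\mathrm{Im}(t)>0$, all the $l_n$ lie, accumulating only on the boundary), the identity theorem then forces the ratio to be $1$ there, and the inversion symmetry $\mathcal{X}_{\beta^{\vee}}(t,\lambda)=\mathcal{X}_{\beta^{\vee}}^{-1}(t,-\lambda)$ --- together with the corresponding symmetry for $\Phi_{\beta^{\vee}}$, which you would still need to verify from Bridgeland's construction --- transports the identity to the right half-plane. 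This would yield the conjecture without ever invoking half-plane asymptotics or the uniqueness lemma, a genuinely different and arguably more robust strategy than the one the paper sets up. But as it stands the one-sector special-function identity is exactly the unproven content of \eqref{Xqd} itself, so the gap has been relocated rather than closed.
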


\begin{rem}\label{endremark} We note that \eqref{Xqd} would  imply that a certain infinite product of $\exp(\mu(z))$ (which in turn are related to Gamma functions via \eqref{Binet}) equals, up to the $e^{Q_H(t|1,-\lambda)}$ factor, a function expressed in term of double gamma functions (see Section \ref{FQD}). Such statements seem quite similar to those in \cite[Section 30]{Barnes}, where the double gamma function is expressed as an infinite product of gamma functions, providing further support for the above conjecture. 

\end{rem}

%%%%%%%%%%%%%%%%%%%%%%%%%%%%%%%%%%%%%%%%%%%%%%%%

\section{Some closing remarks}

In this work we have constructed a (real) hyperk\"ahler geometry $(N,g_N,\omega_1,\omega_2,\omega_3)$ associated to the BPS structure of the resolved conifold. As we have remarked in the introduction and in the body of the paper, because the charge $\delta$ enters in our story as a flavor charge, the part of the BPS spectrum associated to multiples of $\delta$ makes no  contributions to the instanton corrections of the semi-flat HK geometry. As a consequence, the conformal limit of the HK geometry  $(N,g_N,\omega_1,\omega_2,\omega_3)$ only allows us to conjecture a relation to part of the Riemann-Hilbert problem considered in \cite{BridgelandCon}. It would be interesting (but currently unknown to us), to see if there is a relation between the real HK geometry we consider, and the complex HK geometry constructed via \cite{Bridgeland:2020zjh,Alexandrov:2021wxu}, which does take into account the full BPS spectrum. \\

On the other hand, we showed that $(N,g_N,\omega_1,\omega_2,\omega_3)$ realizes an Ooguri-Vafa-like smoothing of the semi-flat HK geometry near the locus $t=0$. This kind of behavior is conjectured to hold in \cite[Section 7]{NewCHK} for a more general class of HK metrics, and we intend to investigate this issue in a more general setting in the future.\\

We note that a crucial ingredient for determining the full hyperk\"ahler geometry in the setting of GMN which we use is the explicit knowledge of the BPS spectrum or DT invariants which we have taken as a given in this work. In general the problem of determining the BPS spectrum for a given CY geometry is very hard.  In the context of compact CY threefolds, the full knowledge of the spectrum and its wall-crossing structure has been so far elusive. To bypass this problem, a connection between BPS structures and topological string theory may prove very helpful. This connection has been significantly substantiated in the context of the Riemann-Hilbert problems of Bridgeland \cite{BridgelandDT}. Based on the pleasant analytic properties of the tau-function for the resolved conifold \cite{BridgelandCon} and the fact that it contains the generating function of GW invariants in an asymptotic expansion, it was suggested that it provides a non-perturbative definition of the topological string partition function for this geometry. \\

The same analytic functions entering the construction of the tau-function were identified in \cite{alim2021integrable} as the solution of a difference equation for the Gromov-Witten potential of the resolved conifold \cite{alim2020difference}. The latter was derived based solely on the knowledge of the asymptotic expansion of topological string theory. The non-perturbative topological string content of this solution was extracted \cite{Alim:2021ukq}, matching it to previously known and expected results in the literature \cite{Hatsuda:2013oxa,Hatsuda:2015owa} and thus confirming the expectation of \cite{BridgelandCon}. We note here that an intermediate step of identifying the solution of the difference equation in \cite{alim2021integrable} is the relation of the non-perturbative completion of the topological string free energy to the quantum dilogarithm function \cite{alim2021integrable}:
\begin{equation}\label{diffeqqdilog}
F_{np}(\lambda,t+\check{\lambda})- F_{np}(\lambda,t)=  - \log H(t\, |\, \check{\lambda},1)\,, \quad \check{\lambda}=\frac{\lambda}{2\pi}\, ,
\end{equation}
where \cite{alim2021integrable}
\begin{equation}\label{resconfreedef}
    F_{np}(\lambda,t):= \log G_3(t\,|\,\check{\lambda},1)\,.
\end{equation}
and 
\begin{equation}\label{g3def}
    G_3(z\, | \, \omega_1,\omega_2) := \exp\left(\frac{\pi i}{6} \cdot B_{3,3}(z+\omega_1\,|\,\omega_1,\omega_1,\omega_2)\right) \cdot \sin_3(z+\omega_1\, |\, \omega_1,\omega_2,\omega_3).
\end{equation}

A relationship of the kind (\ref{diffeqqdilog}), relating topological string theory to Darboux coordinates was expected to hold more generally in  \cite[Sec. 9]{Coman:2018uwk} and was crucial in \cite{Coman:2020qgf} where a tau-function which is proposed to access the non-perturbative structure of topological string theory is related to the Darboux coordinates. Thus, understanding the map between topological strings and the BPS RH problem more precisely may be of further benefit for studying generalizations of the hyperk\"ahler geometry addressed in this work.

\begin{appendix}
\section{The mirror of the resolved conifold}\label{appendix:mirrorsymmetry}
To further motivate the choice of the central charges in the main body of the paper, we discuss the special geometry of the resolved conifold as  obtained from mirror symmetry. The mirror of non-compact CY threefolds is discussed in \cite{Chianglocal,Hori:2000kt}, see also \cite{Hosonolocal}. We focus here on the toric cases. The non-compact CY threefolds in these cases are given by
\begin{equation}
    X= \frac{\mathbb{C}^{3+k}\setminus S}{(\mathbb{C}^*)^k}\,,
\end{equation}
where the $k$ algebraic tori $\mathbb{C}^*$ act on the space by
  \begin{equation}
\begin{split}
  (\mathbb{C}^*)^a :  (z_1,\dots,z_j,\dots z_{3+k}) \mapsto  (\lambda^{l^{(a)}_1}\,z_1,\dots\lambda^{l^{(a)}_j}\,z_j,\dots,\lambda^{l^{(a)}_{3+k}}\,z_{3+k})\,, \quad a=1,\dots,k.
  \end{split}
\end{equation}
 Here, $\lambda \in \mathbb{C}^*$, $l^{(a)}_{i} \in \mathbb{Z}$ are the toric charges and $S$ is a subset  which is fixed by a subgroup of $(\mathbb{C}^*)^k$. The resolved conifold geometry corresponds to the toric variety associated to the toric charge vector
\begin{equation}
\begin{array}{ccccc}
l=&(1&1&-1&-1)
\end{array}\, .
\end{equation}

To specify the mirror geometry we first consider the variables $y_i \in \mathbb{C}^*,i=0,\dots,3$, subject to the constraint
\begin{equation}
    \prod_{i=0}^{3} y_i^{l_i}= \frac{y_0 y_1}{y_2 y_3}=1\,,
\end{equation}
and the polynomial
\begin{equation}
    P(a,y)=\sum_{i=0}^3 a_i y_i\,, \quad a_i \in \mathbb{C}\,, i=0,\dots,3\,,
\end{equation}
which enters the definition of the Landau-Ginzburg potential of the mirror. This is given by
\begin{equation}
    W= U^2+ V^2 + P(a,y)\,,
\end{equation}
where the additional $U,V \in \mathbb{C}$ variables are an artifact of local mirror symmetry, see e.~g.~\cite{Hori:2000kt}. There is a freedom to rescale $W$ by a non-zero complex number, which we can use to set one of the $y$ variables to 1, w.l.o.g we set $y_0=1$. The mirror non-compact CY threefold of the resolved conifold is then given by
\begin{equation}
    \check{X}= \left\{ (U,V,y_1,y_2) \in \mathbb{C}^2\times (\mathbb{C}^*)^2 |W=U^2+V^2 + a_0 + a_1 y_1 + a_2 y_2 + a_3  y_1 y_2^{-1}=0 \right\}\,.
\end{equation}
The $a_i, i=0,\dots,3$ are complex parameters which determine the complex structure of $\check{X}$. The rescaling of $W$ and $y_1,y_2$ can be further used to show that the complex structure of $\check{X}$ only depends on the combination
$$z:=\frac{a_0 a_1}{a_2 a_3} \,.$$
Keeping the explicit dependence on the $a_i$ is however more convenient for the derivation of the Picard-Fuchs equations from a GKZ \cite{GKZ}
system of differential equations annihilating periods of the unique holomorphic $(3,0)$ form on $\check{X}$. The latter is given by
\begin{equation}
    \Omega = \textrm{Res}_{W=0}\, \frac{1}{W} dU\, dV\, \frac{dy_1}{y_1}\, \frac{dy_2}{y_2}\,.
\end{equation}
The periods of $\Omega$ are annihilated by the GKZ operator
\begin{equation}
    \frac{\partial}{\partial a_0}  \frac{\partial}{\partial a_1}- \frac{\partial}{\partial a_2} \frac{\partial}{\partial a_3}\,,
\end{equation}
which translates into the Picard-Fuchs operator expressed in $z$, namely
\begin{equation}
    L= (1-z)\theta^2\,, \quad \theta=z\frac{d}{dz}\,.
\end{equation}
This operator has the following solutions:
\begin{equation}
    \varpi^0=1\,, \quad \varpi^1=\frac{1}{2\pi i}\log z\,.
\end{equation}
These correspond to periods of $\Omega$ over appropriately defined compact three-cycles in $H_3(\check{X},\mathbb{Z})$. The mirror map is identified as
\begin{equation}
    t= \frac{1}{2\pi i}\log z\,.
\end{equation}
A familiar phenomenon of mirror symmetry for local CY is that the Picard-Fuchs system of the mirror does not have enough solutions to recover the expected ingredients of an affine special K\"ahler geometry. Generically it is missing expressions for periods of non-compact three-cycles. One way to recover these is to carefully define non-compact three-cycles on the geometry as was done in \cite{Hosonolocal}. Alternatively one may extend the PF operators, guided by the expectation of its general form in compact CY, when it is formulated in terms of the distinguished coordinates corresponding to the mirror map. This was done in \cite{Forbes}, which we will outline here. The guiding principle is the expected form of the PF operator in terms of the special (flat) coordinate $t$ in the case when the moduli space is complex one-dimensional. It is given by, see e.~g.~\cite{Ceresole:1992su,coxkatz}
\begin{equation}
    L = \partial_t^2 \, C_{ttt}^{-1} \, \partial_t^2
\end{equation}
where $C_{ttt}:=\frac{\partial^3}{\partial t^3} F_0$. This leads to the extended PF operator in $z$ coordinate \cite{Forbes}:
\begin{equation}
    L= \theta^2 (1-z) \theta^2\,.
\end{equation}
This operator has the following solutions:
\begin{eqnarray}\label{periods}
\varpi^0 &=& 1 \, ,\nonumber\\
\varpi^1 &=&\frac{1}{2\pi i} \log z \, ,\nonumber\\
\varpi^2 &=&\frac{1}{(2\pi i)^2} \left( \frac{1}{2} (\log z)^2 + \textrm{Li}_2(z) \right) \, ,\nonumber\\
\varpi^3 &=&\frac{1}{(2\pi i)^3}\left(- \frac{1}{6} (\log z)^3 -\log z \textrm{Li}_2(z) +2  \textrm{Li}_3(z)\right)\,.
\end{eqnarray}
We can identify the additional solutions with
\begin{equation}
    \varpi^2=: F_t\,,\quad \varpi^3= 2F_0 - t F_t\,,
\end{equation}
where $F_t:=\partial_t F_0$ and where the prepotential $F_0$ reads
\begin{equation}\label{prepot} F_0 =\frac{1}{(2\pi i)^3}\left( \frac{1}{3!} (\log z)^3+ \textrm{Li}_3(z)\right)\end{equation}
matching the expected generating function of the genus zero, degree non-zero GW invariants of the resolved conifold. We note that this basis of solutions does not have integral monodromy around $z=0$, but rather transforms as
\begin{equation}
\varpi \rightarrow \left(
\begin{array}{cccc}
1&0&0&0\\
1&1&0&0\\
1/2&1&1&0\\
-1/6&-1/2&-1&1\\
\end{array}\right) \, \cdot \varpi\,.
\end{equation}
A different choice of the basis of solutions however does lead to an integral monodromy.

\end{appendix}

\subsection*{Acknowledgements:} The authors would like to give special thanks to J\"org Teschner for many helpful discussions and suggestions related to this work. 
We have also benefited from discussions with Vicente Cort\'es, Timo Weigand, and Alexander Westphal around common research projects within the Cluster of Excellence ``Quantum Universe". While work on this project was being finished, we learned from Sergei Alexandrov and Boris Pioline that they were also working on a closely related project \cite{ConfTBAResCon}. We would like to thank them for agreeing on the coordination of the arXiv submission. The work of I.T. is funded by the Deutsche Forschungsgemeinschaft (DFG, German Research Foundation) under Germany’s Excellence Strategy – EXC 2121 Quantum Universe – 390833306.
The work of M.A and A.S. is supported through the DFG Emmy Noether grant AL 1407/2-1.\\

\bibliography{References.bib}

\newcommand{\etalchar}[1]{$^{#1}$}
\begin{thebibliography}{HMnMO14}

\bibitem[ABC{\etalchar{+}}09]{Aspinwallbook}
Paul~S. Aspinwall, Tom Bridgeland, Alastair Craw, Michael~R. Douglas, Mark
  Gross, Anton Kapustin, Gregory~W. Moore, Graeme Segal, Bal\'{a}zs
  Szendr\"{o}i, and P.~M.~H. Wilson.
\newblock {\em Dirichlet branes and mirror symmetry}, volume~4 of {\em Clay
  Mathematics Monographs}.
\newblock American Mathematical Society, Providence, RI; Clay Mathematics
  Institute, Cambridge, MA, 2009.

\bibitem[ACD02]{ACD}
D.~Alekseevsky, V.~Cort\'es, and C.~Devchand.
\newblock Special complex manifolds.
\newblock {\em J. Geom. Phys}, 42(1-2):85--105, 2002.

\bibitem[Ale13]{HMreview1}
S.~Alexandrov.
\newblock Twistor approach to string compactifications: A review.
\newblock {\em Phys. Rep.}, 522(1):1--57, 2013.

\bibitem[Ali20]{alim2020difference}
Murad Alim.
\newblock Difference equation for the {G}romov-{W}itten potential of the
  resolved conifold.
\newblock {\em arXiv: 2011.12759}, 2020.

\bibitem[Ali21]{Alim:2021ukq}
Murad Alim.
\newblock {Intrinsic non-perturbative topological strings}.
\newblock {\em 2102.07776}, 2 2021.

\bibitem[AMPP15]{HMreview2}
S.~Alexandrov, J.~Manschot, D.~Persson, and B.~Pioline.
\newblock Quantum hypermultiplet moduli spaces in ${N}=2$ string vacua: a
  review.
\newblock {\em Proc.Symp.Pure Math.}, 90:181--212, 2015.

\bibitem[AP21a]{ConfTBAResCon}
Sergei Alexandrov and Boris Pioline.
\newblock Conformal {TBA} for resolved conifolds.
\newblock {\em arXiv:2106.12006v2}, 2021.

\bibitem[AP21b]{Alexandrov:2021wxu}
Sergei Alexandrov and Boris Pioline.
\newblock {Heavenly metrics, {BPS} indices and twistors}.
\newblock {\em Letters in Mathematical Physics}, 4 2021.

\bibitem[AS21]{alim2021integrable}
Murad Alim and Arpan Saha.
\newblock Integrable hierarchy for the resolved conifold.
\newblock {\em arXiv: 2101.11672}, 2021.

\bibitem[Bar04]{Barnes}
EW~Barnes.
\newblock On the theory of the multiple gamma function.
\newblock {\em Trans. Cambridge Phil. Soc.}, 19:374--425, 1904.

\bibitem[Bri07]{Bridgeland}
Tom Bridgeland.
\newblock Stability conditions on triangulated categories.
\newblock {\em Ann. of Math. (2)}, 166(2):317--345, 2007.

\bibitem[Bri19a]{BridgelandDT}
Tom Bridgeland.
\newblock Riemann-{H}ilbert problems from {D}onaldson-{T}homas theory.
\newblock {\em Invent. Math.}, 216(1):69--124, 2019.

\bibitem[Bri19b]{Bridgeland1}
Tom Bridgeland.
\newblock Riemann-{H}ilbert problems from {D}onaldson-{T}homas theory.
\newblock {\em Invent. Math.}, 216(1):69--124, 2019.

\bibitem[Bri20]{BridgelandCon}
Tom Bridgeland.
\newblock Riemann--{H}ilbert problems for the resolved conifold.
\newblock {\em J. Differential Geom.}, 115(3):395--435, 2020.

\bibitem[BS20]{Bridgeland:2020zjh}
Tom Bridgeland and Ian A.~B. Strachan.
\newblock {Complex hyperk\"ahler structures defined by Donaldson-Thomas
  invariants}.
\newblock {\em Lett Math Phy}, 111, 6 2020.

\bibitem[CDF{\etalchar{+}}93]{Ceresole:1992su}
Anna Ceresole, R.~D'Auria, S.~Ferrara, W.~Lerche, and J.~Louis.
\newblock {Picard-Fuchs equations and special geometry}.
\newblock {\em Int. J. Mod. Phys.}, A8:79--114, 1993.

\bibitem[CFG89]{TypeIIgeometry}
S.~Cecotti, S.~Ferrara, and L.~Girardello.
\newblock Geometry of type {II} superstrings and the moduli of superconformal
  field theories.
\newblock {\em Internat. J. Modern Phys. A}, 4(10), 1989.

\bibitem[CK99a]{Cox:2000vi}
D.~A. Cox and S.~Katz.
\newblock Mirror symmetry and algebraic geometry.
\newblock {\em Mathematical Surveys and Monographs}, 68, 1999.
\newblock Providence, USA: AMS (2000) 469 p.

\bibitem[CK99b]{coxkatz}
David~A. Cox and Sheldon Katz.
\newblock {\em Mirror symmetry and algebraic geometry}, volume~68 of {\em
  Mathematical Surveys and Monographs}.
\newblock American Mathematical Society, Providence, RI, 1999.

\bibitem[CKYZ99]{Chianglocal}
T.-M. Chiang, A.~Klemm, S.-T. Yau, and E.~Zaslow.
\newblock Local mirror symmetry: calculations and interpretations.
\newblock {\em Adv. Theor. Math. Phys.}, 3(3):495--565, 1999.

\bibitem[CLT20]{Coman:2020qgf}
Ioana Coman, Pietro Longhi, and J\"org Teschner.
\newblock {From quantum curves to topological string partition functions II}.
\newblock {\em arXiv: 2004.04585}, 4 2020.

\bibitem[CPT18]{Coman:2018uwk}
Ioana Coman, Elli Pomoni, and J\"org Teschner.
\newblock {From quantum curves to topological string partition functions}.
\newblock {\em arXiv: 1811.01978}, 11 2018.

\bibitem[CT22]{QKBPS}
V.~Cortés and I.~Tulli.
\newblock Quaternionic {K}ähler metrics associated to special {K}ähler
  manifolds with mutually local variations of {BPS} structures.
\newblock {\em Annales Henri Poincar\'e}, 23:2025–2067, 2022.

\bibitem[DFR05a]{Douglas:2000ah}
Michael~R. Douglas, Bartomeu Fiol, and Christian Romelsberger.
\newblock {Stability and BPS branes}.
\newblock {\em JHEP}, 0509:006, 2005.

\bibitem[DFR05b]{Douglas:2000qw}
Michael~R. Douglas, Bartomeu Fiol, and Christian Romelsberger.
\newblock {The Spectrum of BPS branes on a noncompact Calabi-Yau}.
\newblock {\em JHEP}, 0509:057, 2005.

\bibitem[Dou02]{DouglasHMS}
Michael~R. Douglas.
\newblock Dirichlet branes, homological mirror symmetry, and stability.
\newblock In {\em Proceedings of the {I}nternational {C}ongress of
  {M}athematicians, {V}ol. {III} ({B}eijing, 2002)}, pages 395--408. Higher Ed.
  Press, Beijing, 2002.

\bibitem[FGFS17]{FGS}
Sara~A. Filippini, Mario Garcia-Fernandez, and Jacopo Stoppa.
\newblock {Stability data, irregular connections and tropical curves}.
\newblock {\em Selecta Mathematica}, 23:1355–1418, 2017.

\bibitem[FJ05]{Forbes}
Brian Forbes and Masao Jinzenji.
\newblock Extending the {P}icard-{F}uchs system of local mirror symmetry.
\newblock {\em J. Math. Phys.}, 46(8):082302, 39, 2005.

\bibitem[FK94]{qdilog}
L.D. FADDEEV and R.M. KASHAEV.
\newblock Quantum dilogarithm.
\newblock {\em Modern Physics Letters A}, 09(05):427--434, 1994.

\bibitem[Fre99]{Freed}
Daniel~S. Freed.
\newblock Special {K}\"{a}hler manifolds.
\newblock {\em Comm. Math. Phys.}, 203(1):31--52, 1999.

\bibitem[Gai14]{Gaiotto:2014bza}
Davide Gaiotto.
\newblock {Opers and TBA}.
\newblock {\em arXiv: 1403.6137}, 2014.

\bibitem[GMN10]{GMN1}
Davide Gaiotto, Gregory~W. Moore, and Andrew Neitzke.
\newblock {Four-dimensional wall-crossing via three-dimensional field theory}.
\newblock {\em Commun. Math. Phys.}, 299:163--224, 2010.

\bibitem[GMN13a]{GMN3}
Davide Gaiotto, Gregory~W. Moore, and Andrew Neitzke.
\newblock {Spectral networks}.
\newblock {\em Annales Henri Poincare}, 14:1643--1731, 2013.

\bibitem[GMN13b]{GMN2}
Davide Gaiotto, Gregory~W. Moore, and Andrew Neitzke.
\newblock Wall-crossing, {H}itchin systems, and the {WKB} approximation.
\newblock {\em Adv. Math.}, 234:239--403, 2013.

\bibitem[GW00]{LCS}
M.~Gross and P.~M.~H. Wilson.
\newblock Large complex structure limits of {K}3 surfaces.
\newblock {\em J. Differential Geom.}, 55(3):475--546, 2000.

\bibitem[GZK89]{GKZ}
I.~M. Gel'fand, A.~V. Zelevinski\u{i}, and M.~M. Kapranov.
\newblock Hypergeometric functions and toric varieties.
\newblock {\em Funktsional. Anal. i Prilozhen.}, 23(2):12--26, 1989.

\bibitem[HKLR87]{HKSUSY}
N.~J. Hitchin, A.~Karlhede, U.~Lindström, and M.~Roček.
\newblock Hyper-{K}\"ahler metrics and supersymmetry.
\newblock {\em Comm. Math. Phys.}, 108:535 -- 589, 1987.

\bibitem[HMnMO14]{Hatsuda:2013oxa}
Yasuyuki Hatsuda, Marcos Mari\~no, Sanefumi Moriyama, and Kazumi Okuyama.
\newblock {Non-perturbative effects and the refined topological string}.
\newblock {\em JHEP}, 09:168, 2014.

\bibitem[HO15]{Hatsuda:2015owa}
Yasuyuki Hatsuda and Kazumi Okuyama.
\newblock {Resummations and Non-Perturbative Corrections}.
\newblock {\em JHEP}, 09:051, 2015.

\bibitem[Hos06]{Hosonolocal}
Shinobu Hosono.
\newblock Central charges, symplectic forms, and hypergeometric series in local
  mirror symmetry.
\newblock In {\em Mirror symmetry. {V}}, volume~38 of {\em AMS/IP Stud. Adv.
  Math.}, pages 405--439. Amer. Math. Soc., Providence, RI, 2006.

\bibitem[HV00]{Hori:2000kt}
Kentaro Hori and Cumrun Vafa.
\newblock {Mirror symmetry}.
\newblock {\em arXiv:hep-th/0002222}, 2000.

\bibitem[JS12]{JS}
Dominic Joyce and Yinan Song.
\newblock A theory of generalized {D}onaldson-{T}homas invariants.
\newblock {\em Mem. Amer. Math. Soc.}, 217(1020):iv+199, 2012.

\bibitem[Kon95]{HMS}
Maxim Kontsevich.
\newblock Homological algebra of mirror symmetry.
\newblock In {\em Proceedings of the {I}nternational {C}ongress of
  {M}athematicians, {V}ol. 1, 2 ({Z}\"{u}rich, 1994)}, pages 120--139.
  Birkh\"{a}user, Basel, 1995.

\bibitem[KS08a]{KS:2008}
Kontsevich and Soibelman.
\newblock {Stability structures, motivic Donaldson-Thomas invariants and
  cluster transformations}.
\newblock {\em arXiv: 0811.2435}, 2008.

\bibitem[KS08b]{KS}
M.~Kontsevich and Y.~Soibelman.
\newblock Stability structures, motivic {D}onaldson-{T}homas invariants and
  cluster transformations.
\newblock {\em arXiv:0811.2435 [math.AG]}, 2008.

\bibitem[LMVPV10]{Lledo}
Mar\'{\i}a~A. Lled\'{o}, \'{O}scar Maci\'{a}, Antoine Van~Proeyen, and
  Veeravalli~S. Varadarajan.
\newblock Special geometry for arbitrary signatures.
\newblock In {\em Handbook of pseudo-{R}iemannian geometry and supersymmetry},
  volume~16 of {\em IRMA Lect. Math. Theor. Phys.}, pages 85--147. Eur. Math.
  Soc., Z\"{u}rich, 2010.

\bibitem[MNOP06a]{MNOP1}
D.~Maulik, N.~Nekrasov, A.~Okounkov, and R.~Pandharipande.
\newblock Gromov-{W}itten theory and {D}onaldson-{T}homas theory. {I}.
\newblock {\em Compos. Math.}, 142(5):1263--1285, 2006.

\bibitem[MNOP06b]{MNOP2}
D.~Maulik, N.~Nekrasov, A.~Okounkov, and R.~Pandharipande.
\newblock Gromov-{W}itten theory and {D}onaldson-{T}homas theory. {II}.
\newblock {\em Compos. Math.}, 142(5):1286--1304, 2006.

\bibitem[Nei14]{NewCHK}
A.~Neitzke.
\newblock Notes on new constructions of hyperkähler metrics.
\newblock {\em Lecture Notes of the Unione Matematica Italiana Homological
  Mirror Symmetry and Tropical Geometry}, 2014.

\bibitem[OV96]{OV}
Hirosi Ooguri and Cumrun Vafa.
\newblock Summing up {D}-instantons.
\newblock {\em Physical Review Letters}, 77(16):3296–3298, Oct 1996.

\bibitem[Rui00]{Ruijsenaars1}
S.~N.~M. Ruijsenaars.
\newblock On {B}arnes' multiple zeta and gamma functions.
\newblock {\em Adv. Math.}, 156(1):107--132, 2000.

\bibitem[Str90]{Strominger:1990pd}
Andrew Strominger.
\newblock {Special geometry}.
\newblock {\em Commun.Math.Phys.}, 133:163--180, 1990.

\end{thebibliography}
\bibliographystyle{alpha}

\end{document}